\documentclass[12pt,reqno,a4paper]{amsart}

\usepackage{fullpage, color}
\usepackage[T1]{fontenc}
\usepackage{amsmath,amssymb,mathtools,mathrsfs, enumerate}

\usepackage{soul}

\usepackage[colorlinks=true,urlcolor=cyan,linkcolor=blue,citecolor=magenta]{hyperref}

\DeclareTextFontCommand{\emph}{\bfseries}

\newtheorem{theorem}{Theorem}[section]
\newtheorem{proposition}[theorem]{Proposition}
\newtheorem{lemma}[theorem]{Lemma}
\newtheorem{corollary}[theorem]{Corollary}
\newtheorem{conj}[theorem]{Conjecture}

\theoremstyle{definition}
\newtheorem{definition}[theorem]{Definition}

\theoremstyle{remark}
\newtheorem{remark}[theorem]{Remark}

\newcommand{\C}{\mathbb C}
\newcommand{\R}{\mathbb R}
\newcommand{\Q}{\mathbb Q}
\newcommand{\Z}{\mathbb Z}
\newcommand{\N}{\mathbb N}
\newcommand{\PP}{\mathbb P}
\newcommand{\Sym}{\operatorname{Sym}}
\newcommand{\GL}{\operatorname{GL}}

\newcommand{\vol}{\operatorname{vol}}
\newcommand{\diam}{\operatorname{diam}}

\newcommand{\Tet}{\operatorname{Tet}}
\newcommand{\spn}{\operatorname{span}}

\newcommand{\tr}{\mathsf T}
\newcommand{\hh}{\mathrm h}
\newcommand{\ee}{\mathrm e}

\title{Degenerating orbits of the Longest Edge Bisection process}

\author[Karim Adiprasito]{Karim A.~Adiprasito}
\address{Karim Adiprasito, Sorbonne Université and Université Paris Cité, CNRS, IMJ-PRG, F-75005 Paris}
\email{adiprasito@imj-prg.fr}

\author{Daniel Kalmanovich}
\address{Daniel Kalmanovich, Department of Computer Science, Ariel University} 
\email{danielk@ariel.ac.il}
\urladdr{https://danielkalmanovich.wixsite.com/website}

\author{Yaar Solomon}
\address{Yaar Solomon, Department of Mathematics, Ben-Gurion University of the Negev} 
\email{yaars@bgu.ac.il}
\urladdr{https://www.math.bgu.ac.il/~yaars/}

\date{}

\begin{document}

\begin{abstract}
We study the Longest Edge Bisection (LEB) process as a dynamical system on the projective shape space of simplices. A long-standing conjecture going back to Adler and Rivara-Levin and motivated by finite-element mesh refinement, often taken as a standing assumption, is that this procedure is non-degenerate and, in fact, in a certain way periodic.
We prove:
\begin{itemize}
\item There are 3-dimensional simplices such that the longest edge-bisection algorithm degenerates.
\item There is an open set of 4-dimensional simplices on which the longest edge-bisection algorithm degenerates.
\item If parametrizing the space of $d$-dimensional simplices by independent standard Gaussian vectors, then as $d$ increases, a random simplex degenerates asymptotically almost surely.
\end{itemize}
This is realized through exhibiting hyperbolic behaviour of the LEB process. We also exhibit elliptic behaviour that is nonperiodic.
\end{abstract}

\maketitle

\section{Introduction}\label{sec:introduction}

The Longest Edge Bisection (LEB) algorithm is one of the most basic
refinement procedures in mesh generation, of central importance in particular for implementations of the finite-element method.
In dimension two, one
bisects a triangle by joining the midpoint of one of its longest edges
to the opposite vertex, and then repeats this operation on the
descendant triangles iteratively. Rosenberg and Stenger proved that the
angles of all descendants are bounded from below in terms of the
smallest angle of the initial triangle~\cite{RosenbergStenger1975}.
Kearfott proved convergence of the mesh diameter in arbitrary
dimension~\cite{Kearfott1978}, and Stynes obtained sharp planar
convergence results~\cite{Stynes1979,Stynes1980}. A particularly
beautiful planar phenomenon is the finite similarity classes (FSC)
theorem: every initial triangle generates only finitely many
similarity classes. This was proved by Stynes~\cite{Stynes1980},
reproved by Adler~\cite{Adler83}, and later placed in a natural
hyperbolic dynamical framework by Perdomo and
Plaza~\cite{PerdomoPlaza2014}.

In dimension $3$, the analogous procedure bisects a tetrahedron
through the midpoint of a longest edge. Each selected edge gives two
children, and ties between longest edges introduce an additional
choice. The basic qualitative questions are whether repeated
refinement can produce tetrahedra of arbitrarily poor shape and
whether, as in dimension two, only finitely many similarity classes
can occur. Adler briefly pointed to a family of nearly equilateral
tetrahedra that should generate at most $37$ similarity classes, but
his tetrahedral observation was stated without a complete proof
\cite{Adler83}. This construction was made explicit computationally
by Su\'arez, Trujillo, and Moreno~\cite{SuarezTrujilloMoreno2021}, and
the nearly equilateral family was subsequently proved to have at most
$37$ similarity classes by Trujillo-Pino, Su\'arez, and
Padr\'on~\cite{TrujilloPino2024}. Rivara and Levin explicitly investigated and asked whether the 3-dimensional case of the algorithm degenerates \cite{RL1992}.

Michaud and Korotov~\cite{MichaudKorotov2026}
develop a normalized tetrahedral shape space and prove finiteness
for several special orbits and perturbative families. Michaud and
Korotov also report extensive numerical evidence suggesting
nondegeneracy for the examples they study, see also the references
therein, including
\cite{HannukainenKorotovKrizek2014, KorotovKrizekKropac2008,
PadronPlazaSuarez2023, PadronTrujilloSuarez2025}.

Our paper shows that the global $3$-dimensional picture is
different.

Our point of view is the one from dynamical systems. The moduli space of labeled
tetrahedra up to similarity is identified with the projectivization
of the cone of positive-definite Gram matrices, or equivalently with
the projectivized tetrahedral cone in squared-edge coordinates.
Directed bisections, together with relabelings at the ends of
bisection blocks, act on this space by projective linear
transformations. The requirement that the selected edge be longest
restricts each transformation to an explicit admissibility domain,
so that LEB becomes a partially defined semigroup action. The constructions in this paper arise from two qualitatively different invariant dynamics for this action: the first is a
hyperbolic return map on a one-parameter family, contracting toward
the degenerate boundary. One could now think that orbits that do not degenerate are finite; this is not the case. The second construction is an elliptic return map on a compact
invariant ellipse, conjugate to an irrational rotation. 

In the LEB process, at every stage any longest edge may be selected, and either child may be followed. 
If several longest edges are tied, one needs to choose which one to bisect. The \emph{opposite-edge tie-breaking rule} resolves ties by selecting a longest edge whose opposite edge has the maximum length\footnote{This is not a deterministic rule, but in all of our constructions, it is.}. The comparison of opposite
edges is closely related to the condition emphasized in the Adler
family, where the two largest edges are opposite
\cite{Adler83,SuarezTrujilloMoreno2021,TrujilloPino2024}. Here,
however, it is used as a tie-breaking rule, and the required strict
opposite-edge inequality is verified directly along the branch. This is only auxiliary: the final construction does not need tie-breaking.

To formulate degeneration in a scale-invariant way, for a nondegenerate $d$-simplex $\Delta$, put
\begin{equation}\label{eq:degeneration_parameter}
    \vartheta_d(\Delta)
    :=\frac{\vol_d(\Delta)}{\diam(\Delta)^d}.
\end{equation}
For tetrahedra, we abbreviate $\vartheta_3$ to $\vartheta$. The function $\vartheta_d$ is invariant under similarities. A branch
$(\Delta_n)_{n\geq0}$ \emph{degenerates} if
\[    \vartheta_d(\Delta_n)\xrightarrow{n\to\infty}0,
\]
that is, if it approaches the degenerate boundary after normalization
by diameter; it captures the intuition of becoming flat, or more formally, converging to the boundary of the configuration space of simplices.

Our first result is the illustrative starting point of the paper.

\begin{theorem}
\label{thm:main-degeneration}
There exist uncountably many pairwise non-similar tetrahedra, each having a degenerating (infinite) LEB-branch, with ties resolved by the opposite-edge tie-breaking rule.
\end{theorem}

A similar result was also proven independently by Korotov \cite{Korotov}, who also relied on tie-breaking. One may still ask, however, whether this phenomenon relies on tie-breaking. We show that this is not the case.

\begin{theorem}
\label{thm:main-degeneration2}
There exist uncountably many pairwise non-similar tetrahedra, each having a degenerating (infinite) LEB-branch independent of a tie-breaking rule; each longest edge in the degenerating branch is unique.
\end{theorem}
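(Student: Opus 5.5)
We will realize the degenerating branch as the orbit of a projective-linear return map acting on squared-edge coordinates, and then perturb it off the walls where ties occur. A labeled tetrahedron is encoded by its six squared edge lengths $q=(q_{ij})\in\R^6_{>0}$, taken up to scaling. For a fixed labeling, bisecting the edge $\{i,j\}$ through its midpoint $m$ and following one child replaces the vertex $j$ by $m$. By the median formula the new squared lengths are linear in $q$:
\[
|m-k|^2=\tfrac12 q_{ik}+\tfrac12 q_{jk}-\tfrac14 q_{ij}, \qquad |m-i|^2=\tfrac14 q_{ij}.
\]
Composing a fixed word of such directed bisections with a relabeling gives a matrix $M\in\GL_6(\Q)$. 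Its action on $\PP(\R^6_{>0})$ agrees with the LEB dynamics on the admissibility domain $U\subset\PP^5$, namely the open polyhedral cone where, at each step of the word, the chosen edge is strictly longer than the other five current edges. Open strict inequalities are exactly what makes the tie-breaking rule irrelevant.

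\textbf{Step 1 (find the word).} I would search, by computer if needed, for a short word whose matrix $M$ has a real eigenvector $v$ lying on the closure of the tetrahedral cone, i.e.\ on the Cayley--Menger degenerate boundary where the volume vanishes. The eigenvalue $\lambda_1$ for $v$ should be strictly dominant. In addition, $M$ should leave invariant a projective line or curve $L$ through $[v]$ that enters the interior of the cone, on which $[v]$ is an attracting fixed point. This is the hyperbolic behaviour announced in the introduction. The natural starting point is the word underlying Theorem~\ref{thm:main-degeneration}: there the branch lies on a tie wall, and the goal is to find a nearby word, or the same word, whose attracting direction is transverse to the walls.

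\textbf{Step 2 (strict admissibility along the orbit).} I would show that there is a segment $I\subset L$ ending at $[v]$ such that $M(I)\subset I$ and $I\setminus\{[v]\}\subset U$. Because $U$ is defined by finitely many linear inequalities, this reduces to checking that each of the finitely many linear forms (current longest edge minus each other current edge, at every intermediate step) is positive on $I\setminus\{[v]\}$. These forms are positive at the generic points of $I$. Near $[v]$ one expands the forms to first order along $L$, which shows they stay positive. At $[v]$ itself the limiting configuration is degenerate, and there some forms may vanish. We need positive leading coefficients in the eigen-expansion $q=v+t w+\dots$, where $w$ is the direction of $L$ at $[v]$.

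\textbf{Step 3 (degeneration and uncountability).} For $q_0\in I\setminus\{[v]\}$, the iterates $M^n q_0$ converge projectively to $[v]$. Since the volume squared is a continuous Cayley--Menger form vanishing at $v$ and the diameter stays bounded below after normalization, $\vartheta(\Delta_{nk})\to0$, where $k$ is the word length. Intermediate tetrahedra are images under finitely many fixed maps, so $\vartheta\to0$ along the whole branch. Every longest edge is unique by Step 2. The points of $I$ give uncountably many tetrahedra. They may lie on a common orbit, but each orbit is countable, so we can pick uncountably many pairwise non-similar ones, for example from a fundamental domain $I\setminus M(I)$.

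\textbf{Main obstacle.} The main obstacle is Step 2 at the boundary point. Strict inequalities must hold uniformly as the orbit approaches a degenerate configuration, where some edge differences can tend to zero at the same rate as the volume. I would handle this with a two-term asymptotic expansion along the contracting eigendirection and verify the signs explicitly. If some of these signs are zero, I would enlarge $L$ to a small open cone around the unstable–stable splitting. That enlargement would use the spectral gap to control the extra directions.
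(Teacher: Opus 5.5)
Your outline follows the paper's architecture: a marked word acting linearly, and an invariant line $G_\parallel+tG_\perp$ through a rank-two boundary form that the return contracts toward the boundary. Strictness reduces to finitely many gap forms that are affine in $t$ along the line, and intermediate phases are handled by the finitely many prefix maps. The genuine gap is that Step 1 is a search instruction, not an argument. Theorem~\ref{thm:main-degeneration2} is an existence statement, and its substance is that a word with a strictly admissible, invariant, degenerating family exists. Your Steps 2 and 3 are conditional on that word, and you give neither an explicit witness nor any reason one should exist.

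What is missing is a concrete certificate, which the paper supplies. It gives the length-$32$ word $W_{\hh,32}$ with $\sigma_{\hh,32}=(0\,2)(1\,3)$ and the family $256H_{\hh,32}+tq_{\hh,32}q_{\hh,32}^{\tr}$, where $H_{\hh,32}$ has rank two. With $P=P_{\omega_{\hh,32}}$, it verifies the identities $P^{\tr}H_{\hh,32}P=2^{-20}H_{\hh,32}$ and $P^{\tr}q_{\hh,32}=2^{-12}q_{\hh,32}$, so the return acts by $t\mapsto t/16$. Finally, it tabulates all $32$ gaps at $t=0$ and $t=1$; since each $\delta_k$ is concave in $t$, this gives strictness on $(0,1]$. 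You correctly anticipated that the limit sits on tie walls: many $\alpha_k$ are $0$. Your ``positive leading coefficient'' criterion is what the $\beta_k>0$ column certifies, but without the word nothing is checked.

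The guidance you give for the search is also partly misdirected.

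\emph{Strict dominance.} You only need the eigenvalue of $v$ to dominate that of the transverse direction $w$ on $L$, and asking for global strict dominance is harmful. The eigenvalues of $M_\omega$ are the products $\lambda_i\lambda_j$ of eigenvalues of $P_\omega$. So a simple eigenvalue of strictly maximal modulus must be $\lambda_i^2$ with $\lambda_i$ real and of strictly maximal modulus. Its eigenvector $q_iq_i^{\tr}$ is a rank-one (needle) form. The mechanism that actually works has a complex pair $\beta,\overline\beta$ with $|\beta|>|\alpha|$, and there the boundary eigenvalue $|\beta|^2$ shares its modulus with $\beta^2$ and $\overline\beta^{\,2}$.

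\emph{Your fallback.} For the same reason, thickening $L$ into an open cone has no spectral gap to exploit. Within the degenerate boundary, the normalized return near $[v]$ is a rotation, not a contraction; for $\omega_{\hh,32}$ it has infinite order.

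\emph{Reusing the word from Theorem~\ref{thm:main-degeneration}.} This cannot work: for $\omega_{\hh,2}$ the tie $d_{12}=d_{13}=2$ at the second bisection holds identically on the invariant family. A new and much longer word is needed.

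\emph{Uncountability (minor).} The relevant fact is that each unlabeled similarity class meets your segment $I$ in at most $24$ points, because $S_4$ is finite. Countability of $M$-orbits is not the point. The paper instead uses the strictly monotone similarity invariant $\min d/\max d$.
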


One may further ask, then, whether this is a very special property, perhaps confined to special algebraic sets of simplices. The answer is, again, no.

\begin{theorem}
\label{thm:main-degeneration3}
There is an open set of non-similar $4$-simplices, each having a degenerating (infinite) LEB-branch independent of a tie-breaking rule; each longest edge in the degenerating branch is unique.
\end{theorem}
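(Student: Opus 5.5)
We use the dynamical framework from the introduction. Work in the projectivized cone of $4$-simplices in squared-edge coordinates $x=(x_{ij})_{0\le i<j\le 4}\in\R^{10}$. A directed bisection of edge $ij$, followed by a fixed relabeling of the child, acts on this space by an explicit linear map $A_{ij}$. Indeed, by the median formula the new squared edges from the midpoint $m$ are $x_{mk}=\tfrac12x_{ik}+\tfrac12x_{jk}-\tfrac14x_{ij}$, and $x_{mi}=\tfrac14x_{ij}$, while the other squared edges are unchanged. The LEB condition requires that $x_{ij}$ be the strict maximum. This is an open linear condition, so its admissibility domain $U_{ij}$ is an open polyhedral cone.

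The plan has four steps.

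\textbf{Step 1: find a periodic word with a hyperbolic linear map.} Choose a finite word $w=(i_1j_1,\dots,i_kj_k)$, together with relabelings, and set $M=A_{i_kj_k}\cdots A_{i_1j_1}$. We want $M$ to have a real, simple dominant eigenvalue $\lambda_1$ whose eigenvector $v$ lies on the boundary of the positive-definite cone. That is, the Cayley--Menger determinant vanishes at $v$, so $v$ represents a flat simplex.

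\textbf{Step 2: make $v$ strictly admissible along the whole word.} For each $s$, the partial product applied to $v$ should have its bisected edge as a strict maximum. Then $v$ is an attracting fixed point of the projective map $[M]$ on a neighbourhood of $[v]$. Since the conditions in Step 2 are strict inequalities, there is an open neighbourhood $W$ of $[v]$ such that:
\begin{itemize}
\item every $[x]\in W$ follows the word $w$ with unique longest edges at each stage;
\item $[M](W)\subset W$.
\end{itemize}
The second property holds because $[M]$ contracts near an attracting fixed point. Iterating, $[M]^n[x]\to[v]$.

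\textbf{Step 3: conclude degeneration.} We need $W$ to meet the interior of the shape space, i.e.\ actual nondegenerate simplices. This is automatic, since $[v]$ lies on the boundary of the cone and $W$ is an open neighbourhood of it. For $x$ in $W\cap(\text{interior})$, positive-definiteness is preserved by bisection. Moreover $\vartheta_4$ is a continuous, scale-invariant function on the closed projective cone that vanishes exactly on the degenerate boundary. Hence $\vartheta_4(M^nx)\to\vartheta_4(v)=0$ along the subsequence of block endpoints. Within a block there are only finitely many steps, and each bisection changes $\vartheta_4$ by a factor bounded above and below. Therefore the whole branch degenerates. Pairwise non-similarity is not an issue, because an open set of projective classes is uncountable. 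By the choice of $W$, the branch never involves ties.

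\textbf{Main obstacle: Steps 1 and 2.} We need a word whose dominant eigendirection is simultaneously degenerate, strictly admissible at every intermediate stage, and dominant (attracting, rather than merely fixed). In dimension $3$ the authors' constructions apparently yield only one-parameter families, suggesting that there the attracting eigenvector is non-generic or the spectral gap is missing in the transverse directions. The extra room in dimension $4$ (ten coordinates, more bisection patterns) should allow full hyperbolicity.

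To find such a word, I would first search numerically. Take short words, say of length $5$ to $10$, of bisections of a nearly regular $4$-simplex, and look for a cycle of relabeled shapes that drifts towards flatness. Then compute $M$ exactly in rational arithmetic. Verify the eigenvalue gap $|\lambda_1|>|\lambda_2|$ and the degeneracy of $v$ by exact algebra, possibly over a number field. Finally, check the strict inequalities along the word for $v$ itself.

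One subtlety needs care. The attractor $v$ lies on the boundary, so the interior points of $W$ must stay inside the cone throughout the iteration. This is guaranteed because bisection of a genuine simplex yields a genuine simplex.
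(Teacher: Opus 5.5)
\textbf{There is a genuine gap, and it cannot be closed along your route.} Your plan stands or falls with Steps 1--2, and you leave the word to a numerical search. But no word with the properties you ask for exists.

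\textbf{Why the requested word cannot exist.} Via $\mathcal I_4$, $M$ is conjugate to the congruence $G\mapsto P^{\tr}GP$ with $P=P_\omega$. So the eigenvalues of $M$ are the products $p_ip_j$ ($i\le j$) of the eigenvalues of $P$. A real simple eigenvalue of strictly maximal modulus, which is what makes $[v]$ attracting, therefore forces a real $p_1$ with $|p_1|>|p_j|$ for $j\ge2$. It also forces $v=\mathcal I_4(uu^{\tr})$ with $P^{\tr}u=p_1u$. So $v$ is not just flat but rank one, i.e.\ a collinear configuration. This configuration returns after the $k$ bisections of the word with its diameter multiplied by $|p_1|$, and strict dominance gives $|p_1|^4>|\det P|=2^{-k}$.

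For collinear points, strict admissibility means each step bisects the extreme pair of the current extent $[a,b]$. This forces the diameter to shrink by at least $2^{-1/d}$ per step on average. Indeed, with $D=b-a$, put $h(x)=\log_2\bigl(D/\max(x-a,\,b-x)\bigr)\in[0,1]$, and let $\Phi$ be the sum of $h$ over the $d+1$ points. Suppose a step replaces $a$ by the midpoint and multiplies $D$ by $2^{-\ell}$. Then:
the new midpoint has $h\ge1-\ell$;
the surviving extreme $b$ keeps $h=0$;
the removed point had $h=0$;
each of the other $d-1$ points loses at most $\ell$, because the new extent lies inside the old one.
Hence $\Phi$ changes by at least $1-d\ell$. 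Since $\Phi$ is invariant under similarity and relabeling, it returns to its value after one period. So $\sum\ell\ge k/d$, i.e.\ $|p_1|=2^{-\sum\ell}\le2^{-k/4}$ for $d=4$, contradicting $|p_1|>2^{-k/4}$.

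So your search must fail. The best a strictly admissible collinear cycle can give is the neutral case $|p_1|=\cdots=|p_4|$. In particular, your heuristic that dimension four leaves room for full hyperbolicity points the wrong way: the obstruction is the same in every dimension.

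\textbf{What the paper does instead.} The missing idea is to replace an attracting point by a normally contracting face of projectively fixed boundary points. In the paper, $P_{\hh,9}=\frac14I-\frac18v_{\hh,9}q_{\hh,9}^{\tr}$ has the eigenvalue $\frac14$ with multiplicity three, acting as a scalar on the left eigenspace $v_{\hh,9}^{\perp}$, and the smaller eigenvalue $-\frac18$. Hence every form $G$ with $Gv_{\hh,9}=0$ is projectively fixed, and the transverse direction $q_{\hh,9}q_{\hh,9}^{\tr}$ is contracted. The normalized iterates $(Q_{\hh,9}^n)^{\tr}GQ_{\hh,9}^n$ converge to $E_{\hh,9}^{\tr}GE_{\hh,9}$, a rank-three limit that depends on $G$.

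Because nothing contracts to a single point, openness cannot come from a trapping neighbourhood of an attractor. It comes from two ingredients:
strict gaps, at least $\frac92$, along the whole reference segment $G_{\hh,9}(t)$, $0\le t\le1$, including the rank-three endpoint $t=0$;
the power bound $\|Q_{\hh,9}^n\|_1\le7$, which keeps every perturbed normalized orbit within $49\eta$ of the reference orbit $H_{\hh,9}+4^{-n}K_{\hh,9}$.
This is a neutral-stability argument, not a contraction argument. Degeneration then follows from $|\det Q_{\hh,9}|=\frac12$ together with the rank-three limit.

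Your Step~3 bookkeeping (intermediate phases, bounded change of $\vartheta_4$ per bisection) is sound. It becomes usable only once Steps 1--2 are replaced by this mechanism and an explicit word with certified gaps is exhibited.
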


A particular corollary is then as follows:

\begin{corollary}
\label{cor:higher-dimensional-fsc}
Let \(\Sigma_d\) be the simplex whose \(d+1\) vertices are independent standard Gaussian vectors in \(\mathbb R^d\). There is an absolute constant \(c>0\) such that, for every \(d\geq 4\),
\[ \Pr\!\left(\Sigma_d\text{ has a degenerating LEB branch}\right) \geq 1-e^{-cd}. \]

In fact, this lower bound holds for the probability that \(\Sigma_d\) belongs to the interior of the set of simplices having a degenerating branch. Moreover, its entire refinement tree is tie-free almost surely.
\end{corollary}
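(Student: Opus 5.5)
The plan is to reduce the corollary to Theorem~\ref{thm:main-degeneration3} by a \emph{face/sub-simplex} argument together with a Gaussian concentration estimate. The 4-dimensional mechanism is a hyperbolic return map that contracts toward the degenerate boundary along a branch. The first step is to lift this to dimension $d\ge 4$, working in squared-edge (Gram) coordinates.

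Suppose the $d$-simplex has five vertices spanning a 4-face $F$ that realizes the diameter. Suppose also that every edge of $F$ is longer, by a definite margin, than every edge leaving $F$. Then LEB on $\Sigma_d$ initially only bisects edges of $F$. A bisection of an edge of $F$ acts on the edges inside (descendants of) $F$ exactly as the 4-dimensional process does. The remaining vertices only form new edges to the midpoints, and these are averages of old edges; by the parallelogram law they are at most the average of the squared lengths. The branch can therefore follow the degenerating 4-dimensional branch of Theorem~\ref{thm:main-degeneration3} for as long as the edges of the face stay longest.

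The main obstacle is that the face edges shrink under bisection, so eventually outside edges become longest. I would handle this by an inductive block structure. After a bounded number of return steps the face has halved its scale, and the process then works on the transverse directions. The aim is to choose the branch so that every vertex pair is bisected periodically: along a return block all edges halve uniformly up to a bounded distortion, while the 4-face's shape parameter contracts hyperbolically. Concretely, I would show that the region $U_d$ is open and invariant under a return map. Here $U_d$ consists of the simplices whose Gram matrix is close to a product-like configuration: a 4-face in the open set of Theorem~\ref{thm:main-degeneration3}, ``stacked'' over a complementary simplex of smaller, roughly regular shape. The return map is a composition of projective linear maps on an admissibility domain with strict inequalities, so openness, uniqueness of longest edges (tie-freeness), and interior membership all follow from strictness and continuity. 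Since $\vartheta_d$ is bounded by $\vartheta_4$ of the contracting face times bounded factors, it tends to $0$.

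The second step is probabilistic. Write $|x_i-x_j|^2 = 2d + O(\sqrt d)$ with sub-Gaussian tails. I need a 5-subset whose mutual squared distances exceed all others by a margin, and whose normalized shape lies in the open set from Theorem~\ref{thm:main-degeneration3} (after the needed rescaling). I expect this to be hard: Gaussian simplices are nearly regular, so I would instead pick a \emph{different} initial branch. First run LEB a bounded number $k$ of steps, chosen so as to reach from a near-regular simplex a descendant in $U_d$. The near-regular configuration is deterministic, and the fluctuations are $O(d^{-1/2})$ in normalized Gram coordinates. The required statement is therefore that the regular $d$-simplex lies in the interior of the preimage of $U_d$ under some admissible word. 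The remaining task is to bound the probability that the normalized Gram matrix deviates by more than $\varepsilon$ in operator norm; a union bound over $O(d^2)$ entries with Bernstein gives $e^{-cd}$ only after a careful per-entry scale, so I would use the standard Wishart operator-norm deviation bound $\Pr(\|G/d - \mathrm{E}\|>\varepsilon)\le e^{-c\varepsilon^2 d}$. Finally, almost-sure tie-freeness of the whole tree follows because each tie is a proper algebraic (quadric) condition on the vertices, of measure zero, and there are countably many of them over all descendants.
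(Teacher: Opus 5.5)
Your proposal has two genuine gaps, one in each step. \textbf{Lifting.} The region $U_d$ of ``product-like'' configurations and its invariance under a dimension-dependent return map are only asserted, never defined or proved, and that is the whole content of your first step. It is also unnecessary: you need neither that the face edges be longest nor any margin. The missing idea is a waiting argument. Suppose the next edge $e$ prescribed by the face branch is not an ambient longest edge. Then bisect an ambient longest edge $f$ instead. Since $f$ is strictly longer than $e$, which is longest in the face, $f$ is not an edge of the face. So $f$ has at most one endpoint in the face, and one child contains the face unchanged; keep that child. Kearfott's diameter-convergence theorem rules out infinitely many consecutive waiting steps, because the branch would then retain a face of fixed positive diameter. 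Degeneration then transfers through $\vartheta_d(\Sigma)\le\frac{k!}{d!}\vartheta_k(\Delta)$, which follows from the exterior-product volume bound and $\diam\Sigma\ge\diam\Delta$ (Lemma~\ref{lem:lift-face-branch}).

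\textbf{Probability.} Your reduction to ``the regular $d$-simplex lies in the interior of the preimage of $U_d$ under some admissible word'' cannot hold. At the regular simplex all edges are tied, so it lies on the boundary of the admissibility domain of every nonempty word: arbitrarily small perturbations change which edge is longest. The $O(d^{-1/2})$ fluctuations therefore do not sit inside a fixed admissible region; they select the branch, and their direction has a nondegenerate Gaussian limit. Covering a whole neighbourhood would require a good branch for essentially every direction of approach, which is far beyond anything established. Your concentration tool also fails. The edge Gram matrix comes from $d$ vectors in $\R^d$, so $G/d$ is a sample covariance with as many samples as dimensions. Its spectrum spreads out, with eigenvalues near $0$, and $\|G/d-\mathbb E\,G/d\|$ stays of order one. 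Thus the full Gaussian simplex is not near-regular in operator norm; only bounded-size subconfigurations concentrate. The paper exploits exactly this, in three steps. First, it builds an open cone $\mathcal K=\{\mathbf1+\varepsilon(\beta+h)\}$ of strictly admissible degenerating $4$-simplices that accumulate at the regular one along a fixed direction $\beta$ (Lemma~\ref{lem:regular-four-cone}). Second, the multivariate CLT applies to $\sqrt d\,(D^{(d)}-\mathbf1)$ for a fixed $5$-subset, and its limit covariance $\Xi$ is nondegenerate. Hence $D^{(d)}\in\mathcal K$ with probability at least some fixed $p>0$. Third, the $d+1$ vertices split into $\lfloor(d+1)/5\rfloor$ disjoint $5$-subsets, giving independent trials and $1-(1-p)^{\lfloor(d+1)/5\rfloor}\ge1-e^{-c_0d}$. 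The exponential rate thus comes from independent trials, not from concentration. Small $d$ are absorbed because the open set of Corollary~\ref{cor:open-all-dimensions} has positive Gaussian probability. Your tie-freeness argument is fine; each condition is proper because $L_V$ is invertible and every positive-definite Gram matrix is realized by some simplex.
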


Let us make the following conjecture, motivated by nothing more than vague intuition:
\begin{conj}
Almost every simplex of dimension $d\ge d_0$ has a degenerating branch under longest edge bisection. 
\end{conj}

Indeed, once we identified the general idea, the examples for Theorem~\ref{thm:main-degeneration3} were found by writing a script and letting it run overnight; degenerate orbits seem to be rather abundant.

Back to dimension $3$, the failure of finite similarity classes is not confined to branches
which approach the degenerate boundary. We provide another
construction with a nondegenerate infinite branch. This construction
is elliptic rather than hyperbolic.

\begin{theorem}
\label{thm:main-elliptic}
There exists a nonempty open subset of the tetrahedral shape space such that every class in this subset admits an LEB-branch which remains in a compact subset of the nondegenerate shape space and contains infinitely many similarity classes.
Moreover, this open set contains a dense full-measure subset whose elements' entire tree is tie-free.

\end{theorem}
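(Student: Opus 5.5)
We prove Theorem~\ref{thm:main-elliptic} by finding an explicit finite word $w$ of directed bisections and relabelings for which the composed return map $R_w$ is \emph{elliptic}, with an irrational rotation number, on an invariant region.

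We work in squared-edge coordinates $x=(x_{ij})\in\R^6$ on the projectivized tetrahedral cone. A directed bisection of edge $\{i,j\}$ sends the squared edges of a child to linear expressions in $x$. For example, the median satisfies $m_k^2=\tfrac12 x_{ik}+\tfrac12 x_{jk}-\tfrac14 x_{ij}$, and the bisected edge becomes $x_{ij}/4$. So each step is a projective linear map $A\in\GL_6(\R)$ with rational entries. The admissibility domain of the step is the polyhedral cone where $x_{ij}$ strictly exceeds the other five coordinates.

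\textbf{Step 1: find the word and its fixed point.} By computer search we find a word $w=(A_1,\dots,A_k)$, each $A_t$ followed by a relabeling, together with a fixed point $p$ of the projective map $[R_w]$, where $R_w=A_k\cdots A_1$. We require:
\begin{itemize}
\item $p$ is a Perron-type eigenvector of $R_w$, lying in the interior of the tetrahedral cone;
\item the orbit $p, A_1p, A_2A_1p,\dots$ lies in the \emph{interior} of the successive admissibility cones, so each longest edge along it is strict;
\item the derivative $D[R_w]_p$ on the $5$-dimensional tangent space has a pair of eigenvalues $e^{\pm 2\pi i\alpha}$ on the unit circle.
\end{itemize}

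\textbf{Step 2: get a genuinely elliptic block.} Because $R_w$ is linear, $[R_w]$ restricted to a $2$-dimensional invariant subspace $E$ through $p$ is projectively a linear map. Take $E=\spn(p,u,v)$, where $u,v$ span the real invariant plane of the conjugate eigenvalue pair $\lambda e^{\pm2\pi i\alpha}$, and suppose $|\lambda|$ equals the $p$-eigenvalue. Then $[R_w]|_{\PP(E)}$ is conjugate to a rotation by $2\pi\alpha$ about $p$. Its invariant curves are ellipses, and we take the small ones, which lie inside the admissibility domains by continuity.

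To get an \emph{open} set of shapes rather than a $2$-dimensional slice, we also need control of the remaining three transverse directions. We ask that the other eigenvalues have modulus strictly \emph{less} than $\lambda$. Then every point in a small neighbourhood $U$ of $p$ converges projectively under iteration of $R_w$ to $\PP(E)$. Its orbit therefore stays in a compact neighbourhood of the ellipse family, inside all admissibility cones, and keeps rotating. Since $\alpha$ is irrational, the $E$-component of a point not equal to $p$ rotates through infinitely many distinct angles. This gives infinitely many similarity classes along the branch, and compactness keeps $\vartheta$ bounded below. The excluded set, namely points whose limiting $E$-component is exactly $p$, is a proper linear subspace, so it does not affect openness once we shrink $U$ and remove it.

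\textbf{Step 3: irrationality and tie-freeness.} The entries of $R_w$ are rational, so $\lambda e^{2\pi i\alpha}$ is algebraic. Hence $\alpha$ is irrational unless $e^{2\pi i\alpha}$ is a root of unity, and that can be excluded from the characteristic polynomial by a Kronecker/cyclotomic check. For the tie-free statement, note that each tie condition $x_{ab}=x_{cd}$ at any node of the full refinement tree is the pullback of a hyperplane under a finite product of the $A$'s. There are countably many such nodes, so the union of these hyperplanes is a countable union of proper hyperplanes. Its complement in $U$ is dense and of full measure.

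\textbf{Main obstacle.} The hardest part is Step 1 combined with the modulus condition in Step 2. We need a word whose complex pair has modulus exactly equal to the dominant projective eigenvalue and strictly dominates the rest, i.e.\ we need ellipticity with transverse attraction. Generic words will not give an exact modulus equality, so the search must exploit a structural reason for it. First try: look for words compatible with a symmetry, such as a relabeling commuting with $R_w$ or a preserved quadratic form (e.g.\ volume-type invariants under bisection). Such a symmetry forces a $|\cdot|$-equality on an invariant block, and we then verify the strict inequalities and the admissibility margins with exact rational arithmetic.
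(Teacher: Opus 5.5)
There is a genuine gap in Step~2: the transverse-attraction hypothesis can never hold, and it is the only source of openness in your argument.

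\textbf{Why no word can satisfy it.} Every marked word, intermediate relabelings included, acts on squared-edge vectors as $\mathcal I\circ\mathscr C_P\circ\mathcal I^{-1}$ for some $P\in\GL(3,\Q)$. Suppose your centre $p=\mathcal I(G_p)$ lies in the open cone. Then $P^{\tr}G_pP=\lambda G_p$ with $G_p\succ0$, so $\lambda>0$, and $Q:=\lambda^{-1/2}P$ satisfies $Q^{\tr}G_pQ=G_p$. Hence $G_p^{1/2}QG_p^{-1/2}$ is orthogonal. So $Q$ is diagonalizable with eigenvalues $\nu_1,\nu_2,\nu_3$ of modulus one. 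The six eigenvalues of $R_w$ are $\lambda\nu_i\nu_j$ ($i\le j$), all of modulus exactly $\lambda$.

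Consequently no word has ``the other eigenvalues of modulus strictly less than $\lambda$.'' A neighbourhood of $p$ therefore cannot converge projectively to $\PP(E)$. Your ``main obstacle'' is also misplaced. The modulus equality for the complex pair is automatic once a projective fixed point lies in the open cone; compare $\chi_{M_{\ee,3}}$, all of whose roots have modulus $\frac14$. It is the strict transverse inequality that is impossible. Without it you only get the two-dimensional ellipse slices, an analogue of the $\omega_{\ee,3}$ family, not an open set.

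\textbf{The missing idea: use the isometry.} Congruence preserves the Loewner order and $Q^{\tr}G_pQ=G_p$. So every sandwich $\{G:(1-\epsilon)G_p\prec G\prec(1+\epsilon)G_p\}$ is an open $\mathscr C_Q$-invariant set with compact closure in $\Sym_3^{>0}$. Each squared edge length at each phase of a block is $c^{\tr}Gc$ for a fixed coefficient vector $c$, hence Loewner-monotone. So the strict gaps along the orbit of $p$ (your second bullet) give strict admissibility on the whole sandwich for small $\epsilon$, uniformly over all blocks. This is exactly the paper's $\mathcal U_{\ee,9}$ around $H_{\ee,9}$, using $a_k\geq\frac{10}{9}b_k$.

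For infinitely many classes, note that $\mathscr C_Q$ is diagonalizable with eigenvalues $\nu_i\nu_j$, and those different from $1$ must not be roots of unity. The set to remove is the whole fixed space of $\mathscr C_Q$, not just the line through $p$. It is at least two-dimensional when $Q$ has a real eigenvalue $\pm1$ and a complex pair, since then $\nu_1^2=\nu_2\nu_3=1$. Finally, $|\det Q|=1$ upgrades projective coincidences of iterates to actual ones.

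Your Step~3 on tie-freeness is fine and is Lemma~\ref{lem:full-tree-tie-free}.
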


Similar analogues of the above results (Theorems~\ref{thm:main-degeneration3} and Corollary~\ref{cor:higher-dimensional-fsc}) for non-degenerating sets exist:
\begin{itemize}
\item there is an open set of 4-simplices with an infinite LEB branch and
\item as dimension increases, most simplices have an infinite branch in a compact subset.
\end{itemize}

The paper is organized as follows.
Section~\ref{sec:setup} introduces a dynamical setup that is used throughout the paper to study the space of tetrahedral shapes, including the Gram-matrix and squared-edge
coordinates models. We also introduce a convenient way to navigate the branching tree inherent to LEB by using (marked) words.

Section~\ref{sec:hyperbolic} explains the hyperbolic mechanism and
proves Theorem~\ref{thm:main-degeneration} using a marked word of length two. We then prove Theorem~\ref{thm:main-degeneration2} using a marked word of length 32. A small modification then yields Theorem~\ref{thm:main-degeneration3}.

Section~\ref{sec:elliptic} explains the elliptic mechanism and proves
Theorem~\ref{thm:main-elliptic} using a marked word of length three
acting as an irrational rotation on a compact invariant ellipse.

\subsection*{Acknowledgements}

We used an LLM for computational exploration, algebraic checks, and editorial assistance. The mathematical ideas, research direction, and the dynamical approach were developed by us. K. A. is supported by Horizon Europe ERC Grant number: 101045750 / Project acronym: HodgeGeoComb.

\section{Notation and dynamical setup}\label{sec:setup}
The goal of this section is to develop a dynamical approach to studying the LEB process. To simplify notation, the setup is given for dimension $3$, though all components may be defined for general dimension $d$ in a similar way. We introduce two isomorphic models for the tetrahedral shape space: the Gram-matrix model and the squared-edge length model. We then describe the bisections as linear transformations and, accounting for LEB-admissibility, we model the LEB process as a linear partial action on this space.   

The constructions in Sections~\ref{sec:hyperbolic} and
\ref{sec:elliptic} use both models, but in different ways. In the hyperbolic construction in Sections~\ref{sec:hyperbolic}, the
invariant family is obtained in Gram-matrix coordinates, while
squared-edge coordinates are used to verify the longest-edge
conditions and the tie-breaking rule. In the elliptic construction in Section \ref{sec:elliptic},
the invariant family and its dynamics are described directly in
squared-edge coordinates, while the Gram matrix is used to verify
nondegeneracy.

Let $M_3(\R)$ denote the vector space of $3\times3$ real matrices, and
write
\[
    \Sym_3:=\{G\in M_3(\R)\mid G^{\tr}=G\},
    \qquad
    \Sym_3^{>0}:=\{G\in\Sym_3\mid G>0\},
\]
where $G>0$ means that $G$ is positive definite, i.e., $x^{\tr}Gx>0$ for every column vector
$x\in\R^3\setminus\{0\}$. If $C$ is a cone invariant under
multiplication by positive scalars, we denote its positive projectivization by
\[
    \PP C:=C/\R_{>0}.
\]

\subsection{Gram and squared-edge coordinates}
\label{subsec:coordinate-models}

A \emph{labeled tetrahedron} is an ordered quadruple
\[
    \Delta=(p_0,p_1,p_2,p_3),
    \qquad p_i\in\R^3,
\]
whose vertices are affinely independent. We denote the set of labeled
nondegenerate tetrahedra by $\Tet_{\mathrm{lab}}$. Associated with
$\Delta$ is the \emph{edge matrix} based at $p_0$,
\[
    X(\Delta)
    :=
    \begin{bmatrix}
        p_1-p_0&p_2-p_0&p_3-p_0
    \end{bmatrix},
\]
and its \emph{Gram matrix}
\[
    G(\Delta):=X(\Delta)^{\tr}X(\Delta)\in\Sym_3^{>0}.
\]

Two labeled tetrahedra are called similar if they differ by a Euclidean similarity. Let
$\mathcal T_{\mathrm{lab}}$ denote the space of labeled similarity
classes.

\begin{proposition}
\label{prop:gram-model}
Let $\Delta,\Delta'\in\Tet_{\mathrm{lab}}$, with edge matrices
$X=X(\Delta)$ and $Y=X(\Delta')$. Then $\Delta$ and $\Delta'$ are
similar as labeled tetrahedra if and only if $G(\Delta')=c^2G(\Delta)$ for some $c>0$. Consequently,
\[
    \mathcal T_{\mathrm{lab}}
    \cong
    \PP\Sym_3^{>0} 
    \quad\text{ and }\quad 
    \det G(\Delta)=36\,\vol(\Delta)^2.
\]
\end{proposition}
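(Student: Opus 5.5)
The plan is to reduce everything to the standard fact that a matrix is determined, up to a left orthogonal factor, by its Gram matrix. For the forward direction, suppose $\Delta'$ is obtained from $\Delta$ by a labeled Euclidean similarity $x\mapsto cQx+b$ with $c>0$ and $Q\in O(3)$. Then $p_i'-p_0'=cQ(p_i-p_0)$, so $Y=cQX$. Consequently $G(\Delta')=Y^{\tr}Y=c^2X^{\tr}Q^{\tr}QX=c^2G(\Delta)$.

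For the converse, assume $Y^{\tr}Y=c^2X^{\tr}X$. Since $\Delta$ is nondegenerate, $X$ is invertible, and I would set $Q:=c^{-1}YX^{-1}$. A one-line computation gives
\[
Q^{\tr}Q=c^{-2}X^{-\tr}Y^{\tr}YX^{-1}=X^{-\tr}X^{\tr}XX^{-1}=I,
\]
so $Q$ is orthogonal and $Y=cQX$. The map $x\mapsto cQ(x-p_0)+p_0'$ is then a Euclidean similarity sending $p_i\mapsto p_i'$ for every $i$, which is exactly labeled similarity. This step is the only place needing care: one must check that $Q$ is well-defined and orthogonal, and that the similarity matches the labels. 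Both follow from invertibility of $X$.

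For the identification $\mathcal T_{\mathrm{lab}}\cong\PP\Sym_3^{>0}$, the equivalence just proved shows that the map $[\Delta]\mapsto[G(\Delta)]$ is well-defined and injective. For surjectivity, given any $G\in\Sym_3^{>0}$, take $X=G^{1/2}$ (or a Cholesky factor). This $X$ is invertible with $X^{\tr}X=G$, so the tetrahedron $(0,Xe_1,Xe_2,Xe_3)$ realizes $G$. If a topological identification is also wanted, continuity in both directions follows from the same formulas, since $G^{1/2}$ depends continuously on $G$.

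Finally, $\vol(\Delta)=|\det X|/6$, because the tetrahedron is $1/6$ of the parallelepiped spanned by the columns of $X$. Hence $\det G=(\det X)^2=36\vol(\Delta)^2$.
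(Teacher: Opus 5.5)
Your proposal is correct and follows the paper's proof essentially step for step: $Y=cQX$ for the forward direction, $Q:=c^{-1}YX^{-1}$ shown orthogonal for the converse, surjectivity because every positive-definite matrix is some $X^{\tr}X$, and $\vol(\Delta)=\tfrac16|\det X|$ for the determinant identity. Your explicit similarity $x\mapsto cQ(x-p_0)+p_0'$ and the choice $X=G^{1/2}$ simply spell out details the paper leaves implicit.
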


\begin{proof}
Translation disappears when edge vectors are formed. Hence a
similarity sends $X$ to $cQX$ for some $c>0$ and $Q\in O(3)$, and
therefore sends $G=X^{\tr}X$ to $c^2G$. Conversely, if
$Y^{\tr}Y=c^2X^{\tr}X$, then
\[
    Q:=c^{-1}YX^{-1}
\]
satisfies $Q^{\tr}Q=I$, so $Q\in O(3)$ and $Y=cQX$. Every matrix in
$\Sym_3^{>0}$ has the form $X^{\tr}X$, which proves the projective
identification. Finally,
\[
    \vol(\Delta)=\frac16|\det X(\Delta)|
\]
and $\det G(\Delta)=\det X(\Delta)^2$.
\end{proof}

Let $\mathscr F = \{01,02,03,12,13,23\}$ be the set of six unoriented edges, in the displayed order. For a
labeled tetrahedron $\Delta$, put
\[
    d_{ij}(\Delta):=|p_i-p_j|^2,
    \qquad
    \mathbf{d}(\Delta)
    :=
    \begin{pmatrix}        d_{01}&d_{02}&d_{03}&d_{12}&d_{13}&d_{23}
    \end{pmatrix}^{\tr}.
\]

\begin{proposition}\label{prop:gram-edge-equivalence}
The linear map $\mathcal I:\Sym_3\longrightarrow\R^6$
defined by
\[
    \begin{pmatrix}
        g_{11}&g_{12}&g_{13}\\
        g_{12}&g_{22}&g_{23}\\
        g_{13}&g_{23}&g_{33}
    \end{pmatrix}
    \longmapsto
    \begin{pmatrix}
        g_{11}\\
        g_{22}\\
        g_{33}\\
        g_{11}+g_{22}-2g_{12}\\
        g_{11}+g_{33}-2g_{13}\\
        g_{22}+g_{33}-2g_{23}
    \end{pmatrix}
\]
is an isomorphism. If $G=G(\Delta)$, then $\mathcal I(G)=\mathbf{d}(\Delta)$.
\end{proposition}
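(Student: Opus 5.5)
The statement has two parts: $\mathcal I$ is a linear isomorphism, and $\mathcal I(G(\Delta))=\mathbf d(\Delta)$. I would prove the second part first, since it explains the formula, and then get invertibility by writing down an explicit inverse.

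\emph{Step 1: identification with squared edge lengths.} Put $x_i:=p_i-p_0$ for $i=1,2,3$. These are the columns of $X(\Delta)$, so the Gram matrix has entries $g_{ij}=\langle x_i,x_j\rangle$. For edges through $p_0$ this gives $d_{0i}=|x_i|^2=g_{ii}$. For $1\le i<j\le 3$ we have $p_i-p_j=x_i-x_j$. Expanding the inner product gives
\[
d_{ij}=|x_i-x_j|^2=g_{ii}+g_{jj}-2g_{ij}.
\]
Taking $(i,j)=(1,2),(1,3),(2,3)$ reproduces the last three coordinates of $\mathcal I$, in the order fixed by $\mathscr F$. Hence $\mathcal I(G(\Delta))=\mathbf d(\Delta)$.

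\emph{Step 2: linearity and invertibility.} Each output coordinate is a linear combination of the entries $g_{ij}$, so $\mathcal I$ is linear. Both spaces have dimension $6$, because $\Sym_3$ is determined by its six entries on and above the diagonal. It therefore suffices to exhibit a left inverse. Given $(d_{01},d_{02},d_{03},d_{12},d_{13},d_{23})$, the polarization formulas
\[
g_{ii}=d_{0i},\qquad g_{ij}=\tfrac12\bigl(d_{0i}+d_{0j}-d_{ij}\bigr)\quad(1\le i<j\le3)
\]
define a symmetric matrix. Composing this map with $\mathcal I$ returns every entry of $G$, so the map inverts $\mathcal I$. In particular $\mathcal I$ is injective, and by the dimension count it is an isomorphism.

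There is no real obstacle. The only thing to check carefully is that the coordinate order in the definition of $\mathcal I$ matches the displayed order $01,02,03,12,13,23$ of $\mathscr F$.
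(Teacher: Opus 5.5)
Your proof is correct and follows essentially the same route as the paper. You expand $|x_i-x_j|^2$ with $x_i=p_i-p_0$ to obtain the identity $\mathcal I(G(\Delta))=\mathbf d(\Delta)$, then recover the diagonal from the first three coordinates and the off-diagonal entries by polarization, which is exactly the inverse the paper records in \eqref{eq:E_inverse}.
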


\begin{proof}
The displayed identities follow by expanding the six squared-edge
lengths in terms of the three vectors $p_i-p_0$. Conversely, the
diagonal entries of $G$ are the first three coordinates, and the
remaining three identities determine $g_{12},g_{13},g_{23}$.
\end{proof}

For our applications in Sections \ref{sec:hyperbolic} and \ref{sec:elliptic} we record an explicit formula for $\mathcal I^{-1}$:  
\begin{equation}\label{eq:E_inverse}
    \mathcal I^{-1}(\mathbf{d}(\Delta))
    =
    \begin{pmatrix}
        d_{01}
        &
        \dfrac{d_{01}+d_{02}-d_{12}}{2}
        &
        \dfrac{d_{01}+d_{03}-d_{13}}{2}
        \\[3mm]
        \dfrac{d_{01}+d_{02}-d_{12}}{2}
        &
        d_{02}
        &
        \dfrac{d_{02}+d_{03}-d_{23}}{2}
        \\[3mm]
        \dfrac{d_{01}+d_{03}-d_{13}}{2}
        &
        \dfrac{d_{02}+d_{03}-d_{23}}{2}
        &
        d_{03}
    \end{pmatrix}.    
\end{equation}

\begin{definition}
\label{def:tet_cone_squared-edge}
The \emph{tetrahedral cone} in squared-edge coordinates is
\[
    \mathcal C
    :=
    \mathcal I(\Sym_3^{>0})
    \subset\R^6.
\]
Thus $\mathcal C$ consists precisely of the squared-edge vectors of
labeled nondegenerate tetrahedra. It is an open convex cone, and
\[
    \mathcal T_{\mathrm{lab}}\cong\PP\mathcal C.
\]
\end{definition}

Permuting the vertices gives an action of $S_4$ on labeled
 tetrahedra. We use the convention
\[
    \Delta^\sigma
    :=
    \bigl(p_{\sigma(0)},p_{\sigma(1)},p_{\sigma(2)},p_{\sigma(3)}\bigr),
    \qquad \sigma\in S_4.
\]
The space of unlabeled tetrahedral similarity classes is the finite
quotient
\[
    \mathcal T:=\mathcal T_{\mathrm{lab}}/S_4.
\]

\subsection{Relabelings and marked words}
\label{subsec:marked-words}

Let
\[
    \varepsilon_0:=0,
    \qquad
    \varepsilon_1:=e_1,
    \qquad
    \varepsilon_2:=e_2,
    \qquad
    \varepsilon_3:=e_3,
\]
where $e_1,e_2,e_3$ are the standard basis vectors of $\R^3$. For
$\sigma\in S_4$, define
\[
    R_\sigma
    :=
    \begin{bmatrix}
        \varepsilon_{\sigma(1)}-\varepsilon_{\sigma(0)}
        &
        \varepsilon_{\sigma(2)}-\varepsilon_{\sigma(0)}
        &
        \varepsilon_{\sigma(3)}-\varepsilon_{\sigma(0)}
    \end{bmatrix}.
\]
Then
\[
    X(\Delta^\sigma)=X(\Delta)R_\sigma,
    \qquad
    G(\Delta^\sigma)=R_\sigma^{\tr}G(\Delta)R_\sigma.
\]
Moreover, $R_\sigma\in\GL(3,\Z)$ and $|\det R_\sigma|=1$. In
squared-edge coordinates, relabeling permutes the six coordinates. We
denote the corresponding permutation matrix by $\Pi_\sigma$, so that
\[
    \mathbf{d}(\Delta^\sigma)=\Pi_\sigma \mathbf{d}(\Delta).
\]
More explicitly, with respect to the fixed edge order $(01,02,03,12,13,23)$, 
the permutation matrix $\Pi_\sigma$ is characterized by
\begin{equation}\label{eq:explicit_Pi}
    (\Pi_\sigma \mathbf{d})_{ij}
    =
    d_{\sigma(i)\sigma(j)},   
\end{equation}
where the indices on the right are regarded as an unordered edge.

\subsection{Directed bisections}
\label{subsec:Directed_bisections}
For distinct $i,j\in\{0,1,2,3\}$, the symbol
\[
    i\leftarrow j
\]
denotes the directed bisection that replaces $p_i$ by the midpoint
of $p_ip_j$ and leaves the other three vertices unchanged. Its
underlying unoriented edge is
\[
    \underline{i\leftarrow j}:=ij.
\]
The two children obtained by bisecting $ij$ are $i\leftarrow j$ and
$j\leftarrow i$. Let
\[
    \mathcal B
    :=
    \{i\leftarrow j \mid 0\leq i,j\leq3,\ i\neq j\}
\]
be the alphabet of directed bisections.

\begin{proposition}\label{prop:bisection-matrices}
For every $b=i\leftarrow j\in\mathcal B$, there is a matrix
$B_b\in\GL(3,\Q)$ such that
\[
    X(b\Delta)=X(\Delta)B_b,
    \qquad
    G(b\Delta)=B_b^{\tr}G(\Delta)B_b,
\]
for every labeled tetrahedron $\Delta$, and $|\det B_b|=\frac12$.
An explicit formula is obtained by setting
\[
    \varepsilon_k^{(b)}
    :=
    \begin{cases}
        \dfrac12(\varepsilon_i+\varepsilon_j),&k=i,\\[2mm]
        \varepsilon_k,&k\neq i,
    \end{cases}
\]
and defining
\[
    B_b
    :=
    \begin{bmatrix}
        \varepsilon_1^{(b)}-\varepsilon_0^{(b)}
        &
        \varepsilon_2^{(b)}-\varepsilon_0^{(b)}
        &
        \varepsilon_3^{(b)}-\varepsilon_0^{(b)}
    \end{bmatrix}.
\]
\end{proposition}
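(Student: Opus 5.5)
The proof will mirror that of the relabeling formula $X(\Delta^\sigma)=X(\Delta)R_\sigma$. Let $b=i\leftarrow j$ and $\Delta=(p_0,p_1,p_2,p_3)$. The child $b\Delta=(p'_0,\dots,p'_3)$ has
\[
p'_k=p_k \text{ for } k\neq i,\qquad p'_i=\tfrac12(p_i+p_j).
\]
The key observation is that every vertex of $b\Delta$ is an affine combination of the vertices of $\Delta$, with coefficients independent of $\Delta$. Concretely, write $v_k:=p_k-p_0$, so $v_0=0$ and $X(\Delta)=[v_1\,v_2\,v_3]$. For any affine combination $q=\sum_k \lambda_k p_k$ with $\sum_k\lambda_k=1$ we have
\[
q-p_0=\sum_k\lambda_k v_k=X(\Delta)\,\eta,\qquad \eta:=\sum_k\lambda_k\varepsilon_k ,
\]
because $\varepsilon_0=0$ and $\varepsilon_k=e_k$ for $k\geq1$. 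Applying this to $p'_k$ gives $p'_k-p_0=X(\Delta)\varepsilon_k^{(b)}$. Subtracting the $k=0$ instance then yields
\[
p'_k-p'_0=X(\Delta)\bigl(\varepsilon_k^{(b)}-\varepsilon_0^{(b)}\bigr).
\]
Collecting the columns for $k=1,2,3$ gives $X(b\Delta)=X(\Delta)B_b$ with $B_b$ exactly as displayed. The Gram identity follows immediately from $G=X^{\tr}X$, and $B_b$ has rational entries by construction.

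For the determinant, I will use volumes rather than case analysis. We have $|\det X(b\Delta)|=6\vol(b\Delta)$. The child is the tetrahedron spanned by the three vertices other than $p_i$ together with the midpoint of edge $ij$. Its base is the face opposite $p_i$, which it shares with $\Delta$, and its height over that face is half the height of $p_i$. Hence $\vol(b\Delta)=\tfrac12\vol(\Delta)$, so
\[
|\det X(\Delta)|\,|\det B_b|=\tfrac12|\det X(\Delta)|.
\]
Since $X(\Delta)$ is invertible for nondegenerate $\Delta$, this gives $|\det B_b|=\tfrac12$, and in particular $B_b\in\GL(3,\Q)$.

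As an alternative algebraic check, note that the matrix $[\varepsilon^{(b)}_1-\varepsilon^{(b)}_0\,\cdots]$ is obtained from the identity by the affine substitution. A column operation reduces it to the identity with one column halved. This also works in the case $i=0$, where $\varepsilon_0^{(b)}=\tfrac12\varepsilon_j$ shifts every column.

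I do not expect any real obstacle here. The only step needing care is the case $i=0$, where the base vertex itself moves; the affine-combination formulation handles it uniformly.
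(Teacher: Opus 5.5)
Your proposal is correct and follows essentially the same route as the paper. Both write each vertex of $b\Delta$ as a fixed affine combination of the vertices of $\Delta$ to get $X(b\Delta)=X(\Delta)B_b$, and both deduce $|\det B_b|=\tfrac12$ from the child having half the parent's volume; you simply spell out the affine-coordinate step and the height argument that the paper leaves implicit.
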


\begin{proof}
The new vertices have the displayed affine coordinates in the old
edge basis, which gives $X(b\Delta)=X(\Delta)B_b$. A child produced by
one bisection has half the volume of its parent, hence
$|\det B_b|=\frac12$.
\end{proof}

The action on squared-edge data is also linear. If
$b=i\leftarrow j$ and $\mathbf{d}'=\mathbf{d}(b\Delta)$, then
\[
    d'_{ij}=\frac14d_{ij},
\]
and, for $k\notin\{i,j\}$,
\[
    d'_{ik}
    =
    \frac12(d_{ik}+d_{jk})-\frac14d_{ij},
\]
while every edge not incident to $i$ is unchanged. We denote the
resulting matrix by $L_b\in\GL(6,\Q)$, so that
\[
    \mathbf{d}(b\Delta)=L_b\mathbf{d}(\Delta).
\]

These formulas determine the matrix $L_b$ row by row. More
explicitly, for an edge $e$, let $r_e$ denote the coordinate row
vector satisfying $r_e\mathbf{d}=d_e$. If $b=i\leftarrow j$, then, in our standard edge order,
\begin{equation}\label{eq:explicit_L}
  \operatorname{row}_e(L_{i\leftarrow j})
    =
    \begin{cases}
        \dfrac14 r_{ij},
            & e=ij,\\[2mm]
        -\dfrac14 r_{ij}
        +\dfrac12 r_{ik}
        +\dfrac12 r_{jk},
            & e=ik,\quad k\notin\{i,j\},\\[2mm]
        r_e,
            & i\notin e.
    \end{cases}    
\end{equation} 
Thus $L_{i\leftarrow j}$ is obtained simply by writing the six
updated squared-edge coordinates, in the order
$(01,02,03,12,13,23)$, as linear combinations of the original six
coordinates.

The two coordinate models are compatible:
\begin{equation}
\label{eq:bisection-model-compatibility}
    \mathcal I(B_b^{\tr}GB_b)
    =
    L_b\mathcal I(G),
    \qquad G\in\Sym_3.
\end{equation}

A \emph{directed bisection word} is a finite sequence
\[
    W=b_1b_2\cdots b_m,
    \qquad b_r\in\mathcal B,
\]
whose operations are performed from left to right. Set
\[
    B_W:=B_{b_1}B_{b_2}\cdots B_{b_m},
    \qquad
    L_W:=L_{b_m}\cdots L_{b_2}L_{b_1}.
\]
Then it follows from the definitions above that 
\[
    X(W\Delta)=X(\Delta)B_W,
    \qquad
    G(W\Delta)=B_W^{\tr}G(\Delta)B_W,
    \qquad
    \mathbf{d}(W\Delta)=L_W\mathbf{d}(\Delta),
\]
and $|\det B_W|=2^{-m}$.

A relabeling at the end of a word allows the same labeled block to be
iterated.

\begin{definition}\label{def:marked-word-matrix}
A \emph{marked word} is a pair
\[
    \omega=(W,\sigma),
    \qquad
    W\in\mathcal B^*,
    \quad
    \sigma\in S_4,
\]
where $\sigma$ is applied after all bisections in $W$. Its matrices in
the edge-matrix and squared-edge models are
\[
    P_\omega:=B_WR_\sigma,
    \qquad
    M_\omega:=\Pi_\sigma L_W,
\]
respectively. Thus
\[
    X\bigl((W\Delta)^\sigma\bigr)=X(\Delta)P_\omega
\quad
\text{ and }
\quad
    \mathbf{d}\bigl((W\Delta)^\sigma\bigr)=M_\omega \mathbf{d}(\Delta).
\]
For $P\in\GL(3,\R)$, denote its action on Gram matrices by 
\begin{equation}
\label{eq:C_P}
    \mathscr C_P:\Sym_3\longrightarrow\Sym_3,
    \qquad
    \mathscr C_P(G):=P^{\tr}GP.
\end{equation}
Then the two marked-word actions satisfy
\begin{equation}
\label{eq:marked-model-compatibility}
    \mathcal I\bigl(\mathscr C_{P_\omega}(G)\bigr)
    =
    M_\omega\mathcal I(G),
    \qquad G\in\Sym_3.
\end{equation}
\end{definition}

\subsection{LEB admissibility}
\label{subsec:admissibility}

The matrices above are defined for every tetrahedron. Their
interpretation as longest-edge bisections requires the selected edge
to be longest at the stage at which it is bisected.

Let $W=b_1\cdots b_m$ be a directed bisection word and let
$\mathbf{d}\in\mathcal C$. Define the successive squared-edge vectors by
\[
    \mathbf{d}^{(0)}:=\mathbf{d},
    \qquad
    \mathbf{d}^{(r)}
    :=
    L_{b_r}\mathbf{d}^{(r-1)}
    =
    L_{b_r}\cdots L_{b_1}\mathbf{d}
    \quad(1\leq r\leq m).
\]
We say that $W$ is \emph{LEB-admissible at $\mathbf{d}$} if
\begin{equation}
\label{eq:word-admissibility}
    d^{(r-1)}_{\underline{b_r}}
    =
    \max_{e\in\mathscr F}d^{(r-1)}_e
    \qquad
    (1\leq r\leq m).
\end{equation}
Thus each prescribed edge is longest immediately before it is
bisected. A marked word $\omega=(W,\sigma)$ is admissible at $\mathbf{d}$ when
its underlying bisection word $W$ is admissible there.

Accordingly, $P_\omega$ and $M_\omega$ define global matrix
transformations, but their interpretation as an LEB block is valid
only at shapes satisfying~\eqref{eq:word-admissibility}. 

\section{The hyperbolic construction}\label{sec:hyperbolic}

In this section we prove our main results by constructing one-parameter families
\[
    \{\Delta_t\mid t\in(0,1]\}
\]
of pairwise non-similar tetrahedra which are preserved projectively by a particular marked word. We show that the marked word takes $\Delta_t$ to a tetrahedron similar to $\Delta_{t/m}$, for an integer $m>1$, and that the normalized volume of $\Delta_t$ tends to zero with $t$. Applying the construction to $t\in(\frac1{m},1]$ yields uncountably many pairwise non-similar initial tetrahedra with degenerating branches.

\subsection{Method}
\label{subsec:hyperbolic-method}

Let $\omega$ be a marked word, let $P=P_\omega\in\GL(3,\R)$, and
consider the induced operator 
    $\mathscr C_P(G)=P^{\tr}GP$ from \eqref{eq:C_P}.
Thus, considering the quadratic form  $Q_G(x)=x^{\tr}Gx$, we obtain
\[
    Q_{\mathscr C_P(G)}(x)=Q_G(Px).
\]
In other words, $\mathscr C_P$ is the pullback action of $P$ on
quadratic forms.

Suppose that $P$ is diagonalizable, with eigenvalues
$\alpha,\beta,\overline\beta$, where $\alpha\in\R$, $\beta\in\C\setminus\R$ and
\[
    0<|\alpha|<|\beta|.
\]
Let $L\subset\R^3$ be the $\alpha$-eigendirection. On the quotient
$\R^3/L$, the map induced by $P$ is similar to $|\beta|$ times a
rotation. It therefore scales some positive-definite quadratic form
by $|\beta|^2$.
Indeed, write $\beta=a+ib$ and choose an eigenvector
$u+iv$ of the quotient action corresponding to $\beta$. Then
$u,v$ form a real basis of $\R^3/L$, and in this basis the quotient
action is represented by
\[
    \begin{pmatrix}a&b\\-b&a\end{pmatrix}
    =
    |\beta|
    \begin{pmatrix}\cos\theta&\sin\theta\\
                    -\sin\theta&\cos\theta
    \end{pmatrix}.
\]
Consequently, the quadratic form for which $u,v$ are orthonormal is
scaled by $|\beta|^2$.
Pulling this form back to $\R^3$ gives a
positive-semidefinite rank-two form $G_\parallel$ satisfying
\begin{equation}\label{eq:G_parallel_prop}
    \ker G_\parallel=L,
    \qquad
    \mathscr C_P(G_\parallel)=|\beta|^2G_\parallel.
\end{equation}

This form can also be obtained directly from the eigenvectors of
$P^{\tr}$. In general, if
\[
    P^{\tr}q_\gamma=\gamma q_\gamma,
    \qquad
    P^{\tr}q_\delta=\delta q_\delta,
\]
are left eigenvectors of $P$, then
\[
    \mathscr C_P\left(q_\gamma q_\delta^{\tr}+q_\delta q_\gamma^{\tr}\right)
    =
    \gamma\delta\left(q_\gamma q_\delta^{\tr}+q_\delta q_\gamma^{\tr}\right).
\]
Hence, choosing $q_\beta\in\C^3$ such that
$P^{\tr}q_\beta=\beta q_\beta$, we may take
$G_\parallel =     q_\beta\overline{q_\beta}^{\tr}
    +     \overline{q_\beta}q_\beta^{\tr}$.
 Then, for $x\in\R^3$ one sees that
\[
    x^{\tr}G_\parallel x
    =
    2\left|q_\beta^{\tr}x\right|^2,
\]
which makes both its positivity and its kernel transparent.

Now choose a real vector $q_\alpha$ such that
$P^{\tr}q_\alpha=\alpha q_\alpha$,
and set
$
    G_\perp:=q_\alpha q_\alpha^{\tr}.
$
Then $G_\perp$ is positive on $L$ and
\begin{equation}\label{eq:G_perp_prop}
    \mathscr C_P(G_\perp)=\alpha^2G_\perp.
\end{equation}
Writing
\[
    \lambda:=|\beta|^2,
    \qquad
    \mu:=\alpha^2,
\]
we have $0<\mu<\lambda$. Consequently, for every $t>0$,
\[
    G(t):=G_\parallel+tG_\perp
\]
is positive definite and
\[
    \mathscr C_P(G(t))
    =
    \lambda G\left(\frac{\mu}{\lambda}t\right).
\]
Thus the induced projective action contracts the parameter by
$
    \rho:=\frac{\mu}{\lambda}\in(0,1).
$
We note here that passing to squared-edge coordinates, with
\[
    \mathbf{d}(t):=\mathcal I(G(t))
         =\mathbf{d}_\parallel+t\mathbf{d}_\perp,
\]
gives
$M_\omega \mathbf{d}(t)=\lambda \mathbf{d}(\rho t)$.

Since $G_\parallel$ has rank two and $G_\perp$ is positive on its
kernel,
\[
    \det G(t)=ct
\]
for $c = \det(G_\parallel + G_\perp)>0$ (since $G_\parallel + G_\perp$ is positive definite). Hence $G(t)$ approaches a rank-two boundary
configuration as $t\to0$, and the normalized volume tends to zero.
To obtain an actual LEB branch, it remains only to find a suitable word $\omega$ and to verify that it is admissible at every step.

\subsection{Degeneration with tie-breaking}
\label{subsec:hyperbolic-main}
A brute-force search over marked words of length two, following the methodology above, found the word $\omega_{\hh,2}$ presented here.
The search also showed that, up to simultaneous relabeling of the vertices and a change of
the starting phase of the repeated block, it is the unique
length-two marked word for which that construction yields an
LEB-admissible family on some interval $0<t\le t_0$, which produces a degenerating branch. In particular, the example below and Korotov's example from \cite{Korotov} are one and the same. 

Let
\[
    W_{\hh,2}
    :=
    (0\leftarrow1)(1\leftarrow2),
    \qquad
    \sigma_{\hh,2}
    :=
    (1\,3\,2),
    \qquad
    \omega_{\hh,2}
    :=
    (W_{\hh,2},\sigma_{\hh,2}),
\]
where $\sigma_{\hh,2}$ fixes $0$. Thus the two bisections are
followed by the relabeling
\[
    0\mapsto0,
    \qquad
    1\mapsto3,
    \qquad
    3\mapsto2,
    \qquad
    2\mapsto1.
\]

\begin{lemma}\label{lem:constructing_G_t}
    The one-parameter family 
    \begin{equation}\label{eq:G_h(t)}
    G_{\hh,2}(t)
    :=
    \begin{pmatrix}
        2+t&t&\frac12+t\\
        t&t&t\\
        \frac12+t&t&1+t
    \end{pmatrix}, \qquad t\in(0,1]
\end{equation}
    of tetrahedra in the Gram-matrix model is a projectively $\omega_{\hh,2}$-invariant family of nondegenerate tetrahedra.  
\end{lemma}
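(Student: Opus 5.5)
The plan is to recognize $G_{\hh,2}(t)$ as an instance of the construction in \S\ref{subsec:hyperbolic-method}. I would split
\[
    G_{\hh,2}(t)=G_\parallel+tG_\perp,\qquad
    G_\parallel=\begin{pmatrix}2&0&\frac12\\0&0&0\\\frac12&0&1\end{pmatrix},\qquad
    G_\perp=qq^{\tr},\quad q=(1,1,1)^{\tr},
\]
and then check the two eigen-relations \eqref{eq:G_parallel_prop} and \eqref{eq:G_perp_prop} directly for the concrete matrix $P=P_{\omega_{\hh,2}}$. Checking them directly avoids computing complex eigenvectors at all.

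\textbf{Step 1: nondegeneracy.} The $2\times2$ block $\begin{pmatrix}2&\frac12\\ \frac12&1\end{pmatrix}$ has determinant $\frac74>0$. Hence $G_\parallel$ is positive semidefinite of rank two with $\ker G_\parallel=\R e_2$. Since $e_2^{\tr}G_\perp e_2=1>0$, the sum $G_\parallel+tG_\perp$ is positive definite for every $t>0$. As in \S\ref{subsec:hyperbolic-method}, $\det G_{\hh,2}(t)=ct$ with $c=\det(G_\parallel+G_\perp)>0$. By Proposition~\ref{prop:gram-model}, each $G_{\hh,2}(t)$ is therefore the Gram matrix of a nondegenerate labeled tetrahedron.

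\textbf{Step 2: the matrix $P$.} Using Proposition~\ref{prop:bisection-matrices} and the definition of $R_\sigma$, I would compute $P=B_{0\leftarrow1}B_{1\leftarrow2}R_{\sigma_{\hh,2}}$. Here
\[
    B_{0\leftarrow1}=\begin{pmatrix}\frac12&-\frac12&-\frac12\\0&1&0\\0&0&1\end{pmatrix},\qquad
    B_{1\leftarrow2}=\begin{pmatrix}\frac12&0&0\\\frac12&1&0\\0&0&1\end{pmatrix}.
\]
Since $\sigma(1)=3$, $\sigma(2)=1$, $\sigma(3)=2$ and $\sigma(0)=0$, the matrix $R_\sigma$ has columns $e_3,e_1,e_2$. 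This gives
\[
    P=\begin{pmatrix}-\frac12&0&-\frac12\\0&\frac12&1\\1&0&0\end{pmatrix},
\]
and as a check $|\det P|=\frac14$.

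\textbf{Step 3: the eigen-relations.} The column sums of $P$ are all $\frac12$, so $P^{\tr}q=\frac12 q$. Hence $\mathscr C_P(G_\perp)=\frac14G_\perp$, i.e.\ $\mu=\frac14$.

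For $G_\parallel$ I would work with the quadratic form $Q(x)=2x_1^2+x_1x_3+x_3^2$. Substituting $Px=\bigl(-\tfrac12(x_1+x_3),\ \tfrac12x_2+x_3,\ x_1\bigr)$ and expanding should give $Q(Px)=\frac12Q(x)$. This says $\mathscr C_P(G_\parallel)=\frac12G_\parallel$, so $\lambda=\frac12$. This value is consistent with $\mu\lambda^2=(\det P)^2=\frac1{16}$.

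Combining the two relations yields
\[
    \mathscr C_P\bigl(G_{\hh,2}(t)\bigr)=\tfrac12\,G_{\hh,2}(t/2),
\]
with contraction factor $\rho=\frac12$, i.e.\ $m=2$. This is exactly the required projective $\omega_{\hh,2}$-invariance: the marked word sends the class of $G_{\hh,2}(t)$ to the class of $G_{\hh,2}(t/2)$. Because $t\mapsto t/2$ maps $(0,1]$ into itself, the family is closed under the map.

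\textbf{Main obstacle.} The only real risk is bookkeeping of conventions. One must respect the left-to-right order $B_W=B_{b_1}B_{b_2}$, the fact that the relabeling acts by right multiplication by $R_\sigma$, and the direction of $\sigma$. I would guard against errors by cross-checking in squared-edge coordinates via \eqref{eq:marked-model-compatibility}. Concretely, I would compute $\mathbf d(t)=\mathcal I(G_{\hh,2}(t))$ from \eqref{eq:E_inverse}, apply $L_{0\leftarrow1}$, then $L_{1\leftarrow2}$, then $\Pi_\sigma$ using \eqref{eq:explicit_L} and \eqref{eq:explicit_Pi}, and confirm the result equals $\frac12\mathbf d(t/2)$. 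Admissibility is not part of this lemma and would be handled separately.
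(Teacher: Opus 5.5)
Your proposal is correct and follows essentially the paper's proof: the same splitting $G_{\hh,2}(t)=G_\parallel+tG_\perp$, the same matrix $P_{\hh,2}$, and the same eigen-relations $\mathscr C_P(G_\parallel)=\frac12G_\parallel$ and $\mathscr C_P(G_\perp)=\frac14G_\perp$. The differences are minor: you verify the $G_\parallel$ relation by direct expansion, where the paper solves for it from the characteristic polynomial and the kernel direction $e_2$; and you obtain positive definiteness from the kernel argument, where the paper uses the leading principal minors $2+t,\ 2t,\ \frac{7t}{4}$.
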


\begin{proof}
As in Definition \ref{def:marked-word-matrix}, the associated matrix action of $\omega_{\hh,2}$ in the Gram model is
\[
    P_{\hh,2}
    =
    \begin{pmatrix}
        -\frac12&0&-\frac12\\
        0&\frac12&1\\
        1&0&0
    \end{pmatrix},
\]
and its characteristic polynomial is
\[
    \chi_{P_{\hh,2}}(z)
    =
    \frac14(2z-1)(2z^2+z+1).
\]
Indeed, $P_{\hh,2}$ has a real eigenvalue that is $\frac12$, while the other two eigenvalues are $\frac{-1\pm i\sqrt7}{4},
$
both of modulus $\frac{1}{\sqrt2}$. Thus, in the notation of the preceding
subsection,
\[
    \alpha=\frac12,
    \qquad
    |\beta|=\frac1{\sqrt2},
    \qquad
    \mu=\frac14,
    \qquad
    \lambda=\frac12.
\]

We now derive the relevant Gram eigenvectors. Let
$\mathscr C_{P_{\hh,2}}(G)
    =
    P_{\hh,2}^{\tr}GP_{\hh,2}$, as in \eqref{eq:C_P}.
The real eigenspace of $P_{\hh,2}$ is
$L= \spn \{ e_2\}$, namely 
    $P_{\hh,2} e_2=\frac12e_2$.
A symmetric form with kernel $L$ has the shape
\[
    G=
    \begin{pmatrix}
        a&0&c\\
        0&0&0\\
        c&0&b
    \end{pmatrix}.
\]
As in \eqref{eq:G_parallel_prop}, the matrix $G_\parallel$ should satisfy 
\begin{equation}\label{eq:G_parallel}
    \mathscr C_{P_{\hh,2}}(G_\parallel)=\frac12G_\parallel.
\end{equation}
Solving this equation with $G$ gives
$b=\frac a2$ and $c=\frac a4$. Choosing $a=2$ yields
\[
    G_\parallel
    =
    \begin{pmatrix}
        2&0&\frac12\\
        0&0&0\\
        \frac12&0&1
    \end{pmatrix}.
\]
For the transverse form, the vector
$q^{\tr}=
    \begin{pmatrix}
        1&1&1
    \end{pmatrix}$
satisfies
$P_{\hh,2}^{\tr}q=\frac12q$.
Therefore
\[
    G_\perp
    :=
    qq^{\tr}
    =
    \begin{pmatrix}
        1&1&1\\
        1&1&1\\
        1&1&1
    \end{pmatrix}
\]
satisfies
\begin{equation}\label{eq:G_perp}
\mathscr C_{P_{\hh,2}}(G_\perp)
    =
    \frac14G_\perp,   
\end{equation}     
    as in \eqref{eq:G_perp_prop}.
Consequently,
\begin{equation}\label{eq:G_h(t)_decomposition}
    G_{\hh,2}(t)
    =
    G_\parallel+tG_\perp
    =
    \begin{pmatrix}
        2+t&t&\frac12+t\\
        t&t&t\\
        \frac12+t&t&1+t
    \end{pmatrix},
\end{equation}
which by \eqref{eq:G_parallel} and \eqref{eq:G_perp} satisfies 
\begin{equation}\label{eq:returens_with_t/2}
\mathscr C_{P_{\hh,2}}(G_{\hh,2}(t)) = 
\mathscr C_{P_{\hh,2}}(G_\parallel) + t\mathscr C_{P_{\hh,2}}(G_\perp) = 
\frac12 G_\parallel + t\frac14 G_\perp =  \frac12 \left( G_{\hh,2}\left(\frac{1}{2}t\right)\right).
\end{equation}
In particular, the family $G_{\hh,2}(t)$ is projectively $\omega_{\hh,2}$-invariant. 
Since the leading principal minors of $G_{\hh,2}(t)$ are
\begin{equation}\label{eq:minors_of_G_h}
    2+t,
    \qquad
    2t,
    \qquad
    \frac{7t}{4},    
\end{equation}
we have $G_{\hh,2}(t)>0$ when $t>0$, proving the assertion. 
\end{proof}

Next, we choose vertex coordinates for the corresponding tetrahedra from the factorization of the Gram
matrix. Indeed, one can check that 
\[
    G_{\hh,2}(t)
    =
    X_{\hh,2}(t)^{\tr}X_{\hh,2}(t),
\]
where
\[
    X_{\hh,2}(t)
    =
    \begin{pmatrix}
        \frac12&0&1\\[1mm]
        \frac{\sqrt7}{2}&0&0\\[1mm]
        -\sqrt t&-\sqrt t&-\sqrt t
    \end{pmatrix}.
\]
Taking the columns of $X_{\hh,2}(t)$ as the edge vectors based at
$p_0(t)$ gives the convenient realization
\begin{equation*}\label{eq:vertices}
    p_0(t)=(0,0,\sqrt t),
    \qquad
    p_1=\left(\frac12,\frac{\sqrt7}{2},0\right),
    \qquad
    p_2=(0,0,0),
    \qquad
    p_3=(1,0,0).
\end{equation*}
Let
\[
    \Delta_t
    :=
    (p_0(t),p_1,p_2,p_3).
\]
Thus $t$ is the squared height of $p_0(t)$ above the face
$p_1p_2p_3$, and $t=0$ is the planar boundary configuration in which
$p_0=p_2$. 

The method above applies to any marked word $\omega$ whose matrix
$P_\omega$ satisfies the spectral hypotheses of
\S~\ref{subsec:hyperbolic-method}. For our application, the key property of such a family is admissibility with respect to LEB and the opposite-edge tie-breaking rule, at every step. In our brute-force search, we used the squared-edge lengths model to check this admissibility. 

\begin{lemma}\label{lem:admissibility}
    For every $t\in(0,1]$, the bisections induced by $W_{\hh,2} = (0\leftarrow1)(1\leftarrow2)$ on $\Delta_t$ are LEB admissible and obey the opposite-edge tie-breaking rule. 
\end{lemma}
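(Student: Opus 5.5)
The plan is to verify admissibility directly in squared-edge coordinates, using Proposition~\ref{prop:gram-edge-equivalence} to convert $G_{\hh,2}(t)$ into $\mathbf d(\Delta_t)$, and then the explicit rows \eqref{eq:explicit_L} of $L_{0\leftarrow1}$ to obtain the intermediate shape. Only the two bisections in one block need checking. By Lemma~\ref{lem:constructing_G_t}, the output of the block is similar, as a labeled tetrahedron, to $\Delta_{t/2}$. Since $t/2\in(0,1]$, the same verification then applies again at the next block.

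\emph{First step.} Applying $\mathcal I$ to \eqref{eq:G_h(t)} gives, in the order $(01,02,03,12,13,23)$,
\[
    \mathbf d(\Delta_t)=\bigl(2+t,\ t,\ 1+t,\ 2,\ 2,\ 1\bigr)^{\tr}.
\]
For $t\in(0,1]$ we have $2+t>2\geq 1+t>t$ and $2+t>1$, so $01$ is the unique longest edge. Hence the bisection $0\leftarrow1$ is LEB-admissible, and no tie-breaking is involved. The child $0\leftarrow1$ is one of the two permitted children.

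\emph{Second step.} By \eqref{eq:explicit_L}, the squared-edge vector after $0\leftarrow1$ is
\[
    d'_{01}=\tfrac{2+t}{4},\quad
    d'_{02}=\tfrac{t+2}{2}-\tfrac{2+t}{4}=\tfrac{2+t}{4},\quad
    d'_{03}=\tfrac{3+t}{2}-\tfrac{2+t}{4}=\tfrac{4+t}{4},
\]
while $d'_{12}=d'_{13}=2$ and $d'_{23}=1$ are unchanged. For $t\le1$ all of $d'_{01},d'_{02},d'_{03}$ are at most $\tfrac54<2$. So the maximum $2$ is attained exactly at the two edges $12$ and $13$; this is a genuine tie.

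\emph{Tie-breaking.} The opposite-edge rule compares the edge opposite $12$, namely $03$, with the edge opposite $13$, namely $02$. We have $d'_{03}=\tfrac{4+t}{4}>\tfrac{2+t}{4}=d'_{02}$, strictly. So the rule selects $12$ unambiguously, and $1\leftarrow2$ is an admissible child.

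The only delicate point is this tie at the second step. The argument must show that it is resolved strictly and uniformly in $t$, and that no other edge (in particular $03$ for $t$ near $1$) reaches the value $2$. Both facts are covered by the inequalities above. Finally, the relabeling $\sigma_{\hh,2}$ does not affect admissibility. Combined with \eqref{eq:returens_with_t/2}, this shows the next block starts at a tetrahedron similar to $\Delta_{t/2}$.
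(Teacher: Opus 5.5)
Your proposal is correct and is essentially the paper's own proof. You pass to squared-edge coordinates $\mathbf d_{\hh,2}(t)=(2+t,t,1+t,2,2,1)^{\tr}$ and note that $01$ is the unique longest edge; you then apply $L_{0\leftarrow1}$ to get the tie between $12$ and $13$ at squared length $2$, and resolve it by $d^{(1)}_{03}=1+\tfrac t4>\tfrac12+\tfrac t4=d^{(1)}_{02}$. Your closing remarks about iterating via Lemma~\ref{lem:constructing_G_t} are not needed here, since the paper handles that step in Theorem~\ref{thm:hyperbolic-family}.
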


\begin{proof}
To verify admissibility, we first describe the family $\Delta_t$ and the above action in the squared-edge lengths model. In view of \eqref{eq:G_h(t)_decomposition}, and by Proposition \ref{prop:gram-edge-equivalence}, the corresponding vector is
\begin{equation}\label{eq:d_h(t)}
    \mathbf{d}_{\hh,2}(t)
    =
    \begin{pmatrix}
        2+t\\
        t\\
        1+t\\
        2\\
        2\\
        1
    \end{pmatrix}
    =
    \underbrace{
    \begin{pmatrix}
        2\\0\\1\\2\\2\\1
    \end{pmatrix}}_{\mathbf{d}_\parallel}
    +
    t
    \underbrace{
    \begin{pmatrix}
        1\\1\\1\\0\\0\\0
    \end{pmatrix}}_{\mathbf{d}_\perp}.    
\end{equation}

Recall that the coordinate order is
$(01,02,03,12,13,23)$. By \eqref{eq:d_h(t)}, for every $t>0$, the unique longest edge of $\Delta_t$ is $01$, so the first letter
is LEB admissible, and the tie-breaking rule is irrelevant.

The letter $0\leftarrow1$ is applied first. Using \eqref{eq:explicit_L}, it has a squared-edge matrix
\[
    L_{0\leftarrow1}
    =
    \begin{pmatrix}
        \frac14&0&0&0&0&0\\
        -\frac14&\frac12&0&\frac12&0&0\\
        -\frac14&0&\frac12&0&\frac12&0\\
        0&0&0&1&0&0\\
        0&0&0&0&1&0\\
        0&0&0&0&0&1
    \end{pmatrix},
\]
and after applying it, we obtain
\[
   \mathbf{d}^{(1)} = L_{0\leftarrow1}\mathbf{d}_{\hh,2}(t)
    =
    \left(
        \frac12+\frac t4,
        \frac12+\frac t4,
        1+\frac t4,
        2,
        2,
        1
    \right)^{\tr}.
\]
For $0<t\le 1$, the longest edges are exactly $12$ and $13$, both of squared length $2$. The edge opposite to $12$ is $03$, while the edge opposite to $13$
is $02$, and their squared lengths satisfy
\[
    d^{(1)}_{03}
    =
    1+\frac t4
    >
    \frac12+\frac t4
    =
    d^{(1)}_{02},
\]
for every $t\in (0,1]$. Hence the opposite-edge rule selects $12$, exactly as prescribed by
the second letter $1\leftarrow2$, and the proof is complete.
\end{proof}

\begin{lemma}\label{lem:non-similar}
    The tetrahedra $\{\Delta_t\}_{t\in(0,1]}$ are pairwise non-similar. 
\end{lemma}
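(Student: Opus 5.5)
The plan is to use a simple similarity invariant that can be read off from the squared-edge vector \eqref{eq:d_h(t)}. A Euclidean similarity with ratio $c>0$ multiplies every squared edge length by $c^2$. Relabeling the vertices only permutes the six edges. So for unlabeled tetrahedra, the ratio
\[
    \kappa(\Delta):=\frac{\max_{e\in\mathscr F}d_e(\Delta)}{\min_{e\in\mathscr F}d_e(\Delta)}
\]
is invariant under similarity, including similarities that do not respect the labeling. It therefore suffices to show that $t\mapsto\kappa(\Delta_t)$ is injective on $(0,1]$.

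By \eqref{eq:d_h(t)}, the squared edge lengths of $\Delta_t$ form the multiset $\{2+t,\ t,\ 1+t,\ 2,\ 2,\ 1\}$. For $t\in(0,1]$, the maximum is $2+t$, since $2+t>2$ and $2+t>1+t$. The minimum is $t$, since $t\le1<1+t$ and $t<2$; at $t=1$ there is a tie with the edge $23$, but the minimum value is still $t$. Hence
\[
    \kappa(\Delta_t)=\frac{2+t}{t}=1+\frac2t .
\]
This is strictly decreasing in $t$ on $(0,1]$. So if $\Delta_s$ and $\Delta_t$ are similar, then $1+2/s=1+2/t$, which forces $s=t$.

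There is no real obstacle here. The only point to watch is that the statement concerns unlabeled similarity classes, so the invariant must be symmetric under $S_4$; taking the max and min over all six edges guarantees this. The endpoint $t=1$ needs a check for ties, which was done above.

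As an alternative, one could use the invariant $\vartheta$. By Proposition~\ref{prop:gram-model} and \eqref{eq:minors_of_G_h}, $\vol(\Delta_t)^2=\frac{7t}{144}$, and $\diam(\Delta_t)^2=2+t$, so $\vartheta(\Delta_t)^2=\frac{7t}{144(2+t)^3}$. Checking that this is monotone on $(0,1]$ takes a short derivative computation, so the edge-ratio argument is simpler.
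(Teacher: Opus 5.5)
Your proposal is correct and is essentially the paper's proof: the paper uses the same extreme-squared-edge ratio, written as $\frac{t}{2+t}$ (the reciprocal of your $\kappa$), which is invariant under similarities and relabelings and strictly monotone on $(0,1]$. Your extra check of the tie with the edge $23$ at $t=1$ is a correct detail that the paper leaves implicit.
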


\begin{proof}
    By \eqref{eq:d_h(t)}, for every $0<t\le 1$, the shortest squared-edge of $\Delta_t$ is $t$, and
    the longest is $2+t$. Hence
\[
    \frac{\min_{i<j}d_{ij}}{\max_{i<j}d_{ij}}
    =
    \frac{t}{2+t} = 1-\frac{2}{2+t}.
\]
This quantity is invariant under similarities and relabelings, and
is strictly increasing in $t$, which proves the lemma. 
\end{proof}

\begin{theorem}
\label{thm:hyperbolic-family}
For every $t_0\in(0,1]$, repeated application of the marked word $\omega_{\hh,2}$ produces an infinite LEB branch that obeys the opposite-edge
tie-breaking rule. After $n$ blocks, the resulting tetrahedron is
similar to $\Delta_{2^{-n}t_0}$, and the entire branch degenerates.
\end{theorem}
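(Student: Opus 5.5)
The plan is induction on the number of blocks $n$. I will use Lemma~\ref{lem:constructing_G_t} for invariance, Lemma~\ref{lem:admissibility} for admissibility, and Proposition~\ref{prop:gram-model} together with the explicit squared edges \eqref{eq:d_h(t)} for degeneration.

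Fix $t_0\in(0,1]$ and put $t_n:=2^{-n}t_0\in(0,1]$. The inductive claim is that after $n$ admissible blocks the current tetrahedron $\Delta^{(n)}$ is similar, as a labeled tetrahedron, to $\Delta_{t_n}$. Say $\Delta^{(n)}=S(\Delta_{t_n})$ for a similarity $S$.

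For the inductive step, recall that LEB admissibility and the opposite-edge rule depend only on ratios of squared edge lengths of the labeled tetrahedron. These are invariant under similarities, and both bisection and relabeling commute with similarities. Hence Lemma~\ref{lem:admissibility}, applied at $t_n\in(0,1]$, shows that the two bisections of $W_{\hh,2}$ applied to $\Delta^{(n)}$ are LEB steps obeying the opposite-edge rule. The subsequent relabeling $\sigma_{\hh,2}$ is only a change of names; it does not affect the branch, it only fixes labels for the next block. In Gram coordinates, the resulting labeled tetrahedron has
\[
G=c^2\,\mathscr C_{P_{\hh,2}}(G_{\hh,2}(t_n))=\tfrac{c^2}{2}\,G_{\hh,2}(t_n/2)
\]
by \eqref{eq:returens_with_t/2}, where $c>0$ is the scale factor of $S$. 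By Proposition~\ref{prop:gram-model}, it is therefore similar as a labeled tetrahedron to $\Delta_{t_{n+1}}$, which closes the induction. Concatenating the blocks gives an infinite branch in which every bisection is a longest-edge bisection chosen by the opposite-edge rule.

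For degeneration, the quantity $\vartheta$ is similarity-invariant, so $\vartheta(\Delta^{(n)})=\vartheta(\Delta_{t_n})$. By Proposition~\ref{prop:gram-model} and \eqref{eq:minors_of_G_h}, $\vol(\Delta_t)^2=\det G_{\hh,2}(t)/36=7t/144$. By \eqref{eq:d_h(t)}, $\diam(\Delta_t)^2=2+t\ge 2$. Hence $\vartheta(\Delta_{t_n})\le C\sqrt{t_n}\to0$, and this holds along the subsequence of block endpoints.

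The main obstacle is that degeneration is defined for the whole branch $(\Delta_n)$, including the intermediate tetrahedra after a single bisection, not just the block endpoints. I would handle this directly. The intermediate tetrahedron has half the volume of its parent. Its diameter is at least half the parent's diameter, since the bisected longest edge contributes a half-edge. So $\vartheta$ of the intermediate tetrahedron is at most $\tfrac12\cdot 2^3=4$ times $\vartheta$ of its parent, which also tends to $0$. The labeled-similarity bookkeeping of the relabeling $\sigma$ is routine, since it is built into $P_{\hh,2}$.
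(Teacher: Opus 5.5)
Your proposal is correct and follows the paper's proof. The block-to-block return comes from Lemma~\ref{lem:constructing_G_t}, admissibility with the opposite-edge rule comes from Lemma~\ref{lem:admissibility} applied at each $t_n=2^{-n}t_0\in(0,1]$, and degeneration at block endpoints follows from $\vol(\Delta_t)^2=7t/144$ and $\diam(\Delta_t)^2=2+t$. The only minor difference is at the intermediate tetrahedra: you use the general estimate $\vartheta(\text{child})\le 4\,\vartheta(\text{parent})$ for a longest-edge bisection, whereas the paper computes explicitly that the intermediate squared diameter is $2$. Your bound does not depend on the word, so it would equally handle the prefixes of the longer words in Section~\ref{sec:hyperbolic}.
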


\begin{proof}
Set $t_n=2^{-n}t_0$. By
Lemma~\ref{lem:constructing_G_t}, one application of the marked word
$\omega_{\hh,2}$ takes $\Delta_{t_n}$ to a tetrahedron similar to
$\Delta_{t_{n+1}}$. Since $t_n\in(0,1]$ for every $n$,
Lemma~\ref{lem:admissibility} shows that every such block is
LEB-admissible and obeys the opposite-edge tie-breaking rule. Hence
the block can be iterated indefinitely.
By \eqref{eq:minors_of_G_h}
\[
    \vol(\Delta_t)
    =
    \frac16\sqrt{\det G_{\hh,2}(t)}
    =
    \frac{\sqrt{7t}}{12}.
\]
and so by \eqref{eq:d_h(t)}, and \eqref{eq:degeneration_parameter}, we have
\[
    \vartheta(\Delta_t)
    =
    \frac{\sqrt{7t}}{12(2+t)^{3/2}}
    \longrightarrow 0
    \qquad (t\to0).
\]
The only intermediate tetrahedron in each block is similar to
$(0\leftarrow1)\Delta_t$. Its volume is half that of $\Delta_t$, and
the computation in Lemma~\ref{lem:admissibility} shows that its
squared diameter is $2$. Therefore
\[
    \vartheta\bigl((0\leftarrow1)\Delta_t\bigr)
    =
    \frac{\sqrt{7t}}{48\sqrt2}
    \longrightarrow 0
    \qquad (t\to0).
\]
Thus both the marked and the intermediate subsequences degenerate,
and hence so does the entire branch.
\end{proof}

\begin{proof}[Proof of Theorem~\ref{thm:main-degeneration}]
By Lemma~\ref{lem:non-similar}, the uncountable family
\[
    \left\{\Delta_t\mid t\in\left(\tfrac12,1\right]\right\}
\]
consists of pairwise non-similar tetrahedra. By
Theorem~\ref{thm:hyperbolic-family}, each of these tetrahedra has an
infinite degenerating LEB branch that obeys the opposite-edge
tie-breaking rule. Since the sets of numbers $\{2^{-n}t_1\}_{n\in\N}$ and $\{2^{-n}t_2\}_{n\in\N}$ are disjoint for $t_1\neq t_2$, $t_1,t_2\in\left(\tfrac12,1\right]$, every two such branches are disjoint. 
\end{proof}

\subsection{Degeneration without tie-breaking}
\label{subsec:strict-hyperbolic}

We now prove Theorem~\ref{thm:main-degeneration2}. A word will be
called \emph{strictly LEB-admissible} if, immediately before each
bisection, the prescribed edge is the unique longest edge. Thus a
strictly admissible branch is independent of the rule used to resolve
ties. This condition concerns the chosen branch, rather than all
branches of the refinement tree.

For $0<t\leq1$, let $\Delta^{(32)}_t$ have vertices
\[
\begin{aligned}
 p_0&=(0,0,0),&
 p_1&=(40,8\sqrt7,-2\sqrt t),\\
 p_2&=(32,0,15\sqrt t),&
 p_3&=(8,8\sqrt7,6\sqrt t).
\end{aligned}
\]
Its Gram matrix and squared-edge vector are
\begin{equation}\label{eq:strict-hyperbolic-data}
\begin{aligned}
 G_{\hh,32}(t)&=256H_{\hh,32}+tq_{\hh,32}q_{\hh,32}^{\tr},\\
 H_{\hh,32}&=
 \begin{pmatrix}8&5&3\\5&4&1\\3&1&2\end{pmatrix},
 &q_{\hh,32}&=\begin{pmatrix}-2\\15\\6\end{pmatrix},\\
 \mathbf{d}_{\hh,32}(t)&=
 \begin{pmatrix}
 2048+4t\\1024+225t\\512+36t\\
 512+289t\\1024+64t\\1024+81t
 \end{pmatrix}.
\end{aligned}
\end{equation}
Here $H_{\hh,32}$ is positive semidefinite of rank two, and
\[
 \det G_{\hh,32}(t)=256^2\cdot23^2\cdot7t>0.
\]
Consider the length-$32$ word
\begin{equation}\label{eq:strict-hyperbolic-word}
\begin{aligned}
 W_{\hh,32}:={}&
 (0\leftarrow1)(2\leftarrow3)(3\leftarrow1)(2\leftarrow1)
 (1\leftarrow0)(2\leftarrow0)(0\leftarrow3)(3\leftarrow1)\\
 & (1\leftarrow0)(2\leftarrow3)(3\leftarrow0)(1\leftarrow0)
 (0\leftarrow2)(1\leftarrow2)(2\leftarrow3)(0\leftarrow3)\\
 & (3\leftarrow1)(2\leftarrow0)(0\leftarrow1)(2\leftarrow1)
 (1\leftarrow3)(2\leftarrow3)(3\leftarrow0)(1\leftarrow0)\\
 & (0\leftarrow2)(3\leftarrow1)(1\leftarrow2)(0\leftarrow2)
 (2\leftarrow3)(0\leftarrow3)(3\leftarrow1)(1\leftarrow2),
\end{aligned}
\end{equation}
and the marked word
\[
 \omega_{\hh,32}:=(W_{\hh,32},\sigma_{\hh,32}),
 \qquad \sigma_{\hh,32}:=(0\,2)(1\,3).
\]
Thus the final relabeling is
$(p_0,p_1,p_2,p_3)\mapsto(p_2,p_3,p_0,p_1)$.

\begin{lemma}\label{lem:strict-hyperbolic-return}
For every $0<t\leq1$, the word $\omega_{\hh,32}$ is strictly
LEB-admissible at $\Delta^{(32)}_t$, and
\[
 \mathscr C_{P_{\omega_{\hh,32}}}(G_{\hh,32}(t))
 =2^{-20}G_{\hh,32}(t/16).
\]
\end{lemma}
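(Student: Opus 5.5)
The proof splits into two independent parts: the algebraic return identity and strict admissibility along the 32 steps. Both reduce to finite computations with exact rational data.

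\textbf{Return identity.} I will form $P:=P_{\omega_{\hh,32}}=B_{b_1}\cdots B_{b_{32}}R_{\sigma_{\hh,32}}$ from Proposition~\ref{prop:bisection-matrices}; its entries are dyadic rationals and $|\det P|=2^{-32}$. Following \S\ref{subsec:hyperbolic-method}, it suffices to check two eigen-relations:
\[
P^{\tr}H_{\hh,32}P=2^{-20}H_{\hh,32},
\qquad
P^{\tr}q_{\hh,32}=\pm2^{-12}q_{\hh,32}.
\]
The second gives $\mathscr C_P(qq^{\tr})=2^{-24}qq^{\tr}$, and then
\[
\mathscr C_P(G(t))=2^{-20}\bigl(256H+\tfrac{t}{16}qq^{\tr}\bigr)=2^{-20}G(t/16).
\]
As a consistency check, $\det P^2=2^{-64}$ should equal $\lambda\cdot\lambda\cdot\mu$ up to the rank structure, i.e.\ $2^{-20}\cdot2^{-20}\cdot2^{-24}=2^{-64}$, which matches. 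Concretely, I would verify that $e$ spanning $\ker H$ satisfies $Pe=\pm2^{-12}e$, that $q$ is a left eigenvector, and that the induced quotient action scales $H$ by $2^{-20}$. Equivalently, in squared-edge coordinates I would check $M_\omega\mathbf d_\parallel=2^{-20}\mathbf d_\parallel$ and $M_\omega\mathbf d_\perp=2^{-24}\mathbf d_\perp$, using \eqref{eq:marked-model-compatibility}. Here
\[
\mathbf d_\parallel=256\,\mathcal I(H)=(2048,1024,512,512,1024,1024)^{\tr},
\qquad
\mathbf d_\perp=(4,225,36,289,64,81)^{\tr}.
\]

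\textbf{Strict admissibility.} Because every $L_b$ is linear, each intermediate vector is affine in $t$:
\[
\mathbf d^{(r)}(t)=L_{b_r}\cdots L_{b_1}\mathbf d_\parallel+t\,L_{b_r}\cdots L_{b_1}\mathbf d_\perp=:\mathbf a^{(r)}+t\,\mathbf c^{(r)}.
\]
For each $r=1,\dots,32$ I will compute $\mathbf a^{(r-1)},\mathbf c^{(r-1)}$ via \eqref{eq:explicit_L}. I then check that the prescribed edge $e_r=\underline{b_r}$ satisfies
\[
a_{e_r}+tc_{e_r}>a_e+tc_e\qquad\text{for all }e\neq e_r\text{ and all }t\in(0,1].
\]
Each such inequality compares two affine functions of $t$, so it is enough to verify it at the endpoints. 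The endpoint $t=1$ is checked directly. At $t=0$ there are two cases:
\begin{itemize}
\item If $a_{e_r}>a_e$, the strict inequality holds at $t=0$, and together with $t=1$ this covers the interval.
\item If $a_{e_r}=a_e$, which is likely since $\mathbf d_\parallel$ has many ties, I need $c_{e_r}>c_e$ in addition to the check at $t=1$.
\end{itemize}
Because $t\mapsto t/16$ maps $(0,1]$ into itself, this one verification on $(0,1]$ covers every iterate. The only intermediate shapes that occur are the $32$ prefixes applied to $\Delta_{t_n}$ with $t_n\in(0,1]$.

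\textbf{Main obstacle.} The hard part is the bookkeeping of the 32 steps rather than any conceptual difficulty. The degenerate limit $\mathbf a^{(r)}$ probably carries exact ties at several steps, which the $t$-coefficients $\mathbf c^{(r)}$ must break in the correct direction. I would organise the computation as a table of $(\mathbf a^{(r)},\mathbf c^{(r)})$, scaled by powers of $2$ to keep integers, and flag the tie steps for the secondary comparison. The scaling should work out cleanly, since the final scaling factors are $2^{-20}$ and $2^{-24}$.
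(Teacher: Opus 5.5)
Your proposal is correct and follows essentially the same route as the paper. First you verify $P^{\tr}H_{\hh,32}P=2^{-20}H_{\hh,32}$ and $P^{\tr}q_{\hh,32}=2^{-12}q_{\hh,32}$ for the explicit dyadic matrix $P$; then, since every comparison is affine in $t$, strict admissibility on $(0,1]$ follows from a nonnegative gap at $t=0$ together with a positive gap at $t=1$. The paper bundles your pairwise endpoint checks into the concave quantity $\delta_k(t)$ (selected length minus the largest competitor). Its table shows exactly what you anticipated: many steps have $\delta_k(0)=0$, and the tie is broken in the right direction at $t=1$.
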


\begin{proof}
Using Proposition~\ref{prop:bisection-matrices} and
Definition~\ref{def:marked-word-matrix}, we obtain
\[
 P_{\omega_{\hh,32}}=
 \frac1{4096}
 \begin{pmatrix}-2&-3&0\\2&1&2\\-6&-1&-4\end{pmatrix}.
\]
Direct multiplication gives
\[
 P_{\omega_{\hh,32}}^{\tr}H_{\hh,32}P_{\omega_{\hh,32}}
 =2^{-20}H_{\hh,32},
 \qquad
 P_{\omega_{\hh,32}}^{\tr}q_{\hh,32}=2^{-12}q_{\hh,32},
\]
which proves the return identity. Also,
$\det P_{\omega_{\hh,32}}=2^{-32}$, as required for $32$ bisections.

It remains to verify strict admissibility. Write
$W_{\hh,32}=b_1\cdots b_{32}$, and let
$\mathbf{d}^{(k)}(t)=L_{b_k}\cdots L_{b_1}\mathbf{d}_{\hh,32}(t)$, with
$\mathbf{d}^{(0)}(t)=\mathbf{d}_{\hh,32}(t)$. Put
\[
 \delta_k(t):=
 d^{(k-1)}_{\underline{b_k}}(t)
 -\max_{e\in\mathscr F\setminus\{\underline{b_k}\}}
 d^{(k-1)}_e(t).
\]
The midpoint formulas~\eqref{eq:explicit_L} give the following
endpoint data, where
$\alpha_k=2^{r_k}\delta_k(0)$ and
$\beta_k=2^{r_k}\delta_k(1)$:
\[
\begin{array}{r|r|r|r@{\qquad}r|r|r|r}
 k&r_k&\alpha_k&\beta_k&k&r_k&\alpha_k&\beta_k\\\hline
 1&0&1024&803&17&12&4096&3808\\
 2&0&0&17&18&12&0&24\\
 3&2&2048&1679&19&14&8192&8003\\
 4&2&0&621&20&14&0&253\\
 5&4&4096&3856&21&16&16384&16428\\
 6&4&0&297&22&16&0&357\\
 7&6&0&560&23&18&0&1792\\
 8&6&0&528&24&18&0&799\\
 9&6&2048&1679&25&18&8192&7933\\
 10&6&0&117&26&20&0&140\\
 11&8&4096&3856&27&20&16384&16163\\
 12&8&0&207&28&20&0&161\\
 13&8&2048&2093&29&22&32768&33020\\
 14&10&0&576&30&22&0&765\\
 15&10&0&275&31&24&0&528\\
 16&10&0&112&32&24&0&135
\end{array}
\]
Every comparison defining $\delta_k$ is affine in $t$. Therefore
\[
 \delta_k(t)
 \geq(1-t)\delta_k(0)+t\delta_k(1)
 =2^{-r_k}\bigl((1-t)\alpha_k+t\beta_k\bigr)>0
 \qquad(0<t\leq1).
\]
This proves strict admissibility throughout the parameter interval.
\end{proof}

\begin{theorem}\label{thm:strict-hyperbolic-family}
For every $t_0\in(0,1]$, repeated application of $\omega_{\hh,32}$
produces a strictly admissible degenerating LEB branch. After $n$
marked blocks, its tetrahedron is similar to
$\Delta^{(32)}_{16^{-n}t_0}$.
\end{theorem}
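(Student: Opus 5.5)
The plan is to follow the proof of Theorem~\ref{thm:hyperbolic-family} closely, with Lemma~\ref{lem:strict-hyperbolic-return} taking the place of Lemmas~\ref{lem:constructing_G_t} and~\ref{lem:admissibility}. Set $t_n:=16^{-n}t_0\in(0,1]$. By Lemma~\ref{lem:strict-hyperbolic-return}, one application of $\omega_{\hh,32}$ sends the Gram matrix $G_{\hh,32}(t_n)$ to $2^{-20}G_{\hh,32}(t_{n+1})$. By Proposition~\ref{prop:gram-model}, the resulting labeled tetrahedron is therefore similar to $\Delta^{(32)}_{t_{n+1}}$.

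Strict admissibility is invariant under similarity and depends only on the squared-edge vector up to positive scaling. Since each $t_n$ lies in $(0,1]$, the same lemma shows that the block is strictly admissible at every stage. An induction on $n$ then gives an infinite, strictly admissible branch whose tetrahedron after $n$ marked blocks is similar to $\Delta^{(32)}_{t_n}$.

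For degeneration I use $\det G_{\hh,32}(t)=256^2\cdot 23^2\cdot 7t$. This gives $\vol(\Delta^{(32)}_t)=\tfrac16\cdot 256\cdot 23\sqrt{7t}$. By \eqref{eq:strict-hyperbolic-data}, the squared diameter is at least $2048$ (it equals $2048+4t$, or one can simply bound it below). Hence $\vartheta(\Delta^{(32)}_{t_n})\le C\sqrt{t_n}\to0$, so the marked subsequence degenerates.

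The main obstacle is the intermediate tetrahedra, because the definition of degeneration requires $\vartheta\to0$ along the whole branch and not only along the subsequence of marked blocks. I handle this uniformly. The $k$-th intermediate tetrahedron inside block $n$, for $0\le k<32$, is $(b_1\cdots b_k)\Delta^{(32)}_{t_n}$ up to similarity. Its volume is $2^{-k}\vol(\Delta^{(32)}_{t_n})=O(\sqrt{t_n})$.

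It remains to bound its diameter from below uniformly in $t\in(0,1]$. The squared-edge vector is $L_{b_k}\cdots L_{b_1}\mathbf{d}_{\hh,32}(t)$, which is affine in $t$. At $t=0$ it equals $L_{b_k}\cdots L_{b_1}\mathcal I(256H_{\hh,32})$, the squared-edge vector of a nonzero rank-two configuration. This vector is nonzero, because the $B_b$ are invertible and so $B_{b_1\cdots b_k}^{\tr}HB_{b_1\cdots b_k}\neq0$. The largest entry of an affine vector is continuous in $t$, and it is positive at $t=0$. It is also positive for $t\in(0,1]$, since the tetrahedron is nondegenerate there. By compactness of $[0,1]$ it is therefore bounded below by a constant $c_k>0$. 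Alternatively, the table in Lemma~\ref{lem:strict-hyperbolic-return} already gives that the longest edge at each step dominates, with explicit positive values.

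Hence $\vartheta$ of each intermediate tetrahedron is at most $C_k\sqrt{t_n}$. Taking $C:=\max_{k<32}C_k$ gives $\vartheta\le C\sqrt{t_n}\to0$ along the whole branch, which proves the theorem.
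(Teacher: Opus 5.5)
Your proof is correct and follows the paper's argument essentially step for step. You iterate Lemma~\ref{lem:strict-hyperbolic-return} for the return and for strict admissibility, and you use $\det G_{\hh,32}(t)$ for degeneration at the block boundaries. For the finitely many prefixes, you use that their $t=0$ limits $256B_V^{\tr}H_{\hh,32}B_V$ are nonzero of rank two, so the diameters stay bounded below while the volumes, scaled by $2^{-k}$, tend to zero.

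One caveat: your ``alternatively'' remark does not work as stated. The table records the gaps $\delta_k$, not the selected squared lengths, and many of these gaps vanish at $t=0$ ($\alpha_k=0$), so they give no uniform lower bound on the diameter. Your main compactness argument does not need that remark.
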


\begin{proof}
The return and admissibility assertions follow by iterating
Lemma~\ref{lem:strict-hyperbolic-return}. Moreover,
\[
 \vartheta(\Delta^{(32)}_t)
 =\frac{256\cdot23\sqrt{7t}}
 {6(2048+4t)^{3/2}}
 \longrightarrow0
 \qquad(t\to0).
\]
This proves degeneration at the block boundaries. For any fixed
prefix $V$ of $W_{\hh,32}$, its Gram matrix is
$B_V^{\tr}G_{\hh,32}(t)B_V$. As $t\to0$, this converges to
$256B_V^{\tr}H_{\hh,32}B_V$, which has rank two because $B_V$ is
invertible. Hence its diameter has a positive limit, while its
volume tends to zero. Applying this argument to the finitely many
prefixes proves degeneration of the entire branch.
\end{proof}

\begin{proof}[Proof of Theorem~\ref{thm:main-degeneration2}]
For $0<t\leq1$, the shortest and longest squared-edge lengths of
$\Delta^{(32)}_t$ are $512+36t$ and $2048+4t$, respectively. Their
ratio
\[
 \frac{512+36t}{2048+4t}
\]
is strictly increasing in $t$ and is invariant under similarities
and relabelings. Thus the family consists of pairwise non-similar
tetrahedra. Theorem~\ref{thm:strict-hyperbolic-family} supplies the
required branches, with no use of a tie-breaking rule.
\end{proof}

\subsection{The higher-dimensional case}
\label{subsec:open-higher-dimensional}

We prove Theorem~\ref{thm:main-degeneration3} in the stronger form
of an open-set statement, and then prove
Corollary~\ref{cor:higher-dimensional-fsc}.
The Gram-matrix,
squared-edge, and marked-word conventions of
Section~\ref{sec:setup} extend to a labeled $d$-simplex by using
$d\times d$ Gram matrices and the $\binom{d+1}{2}$ unordered
edges. We denote the corresponding linear isomorphism by
$\mathcal I_d$, and recall the notation $\vartheta_d(\Sigma)$ from \eqref{eq:degeneration_parameter}. 
For symmetric matrices, $A\preceq B$ means that $B-A$ is positive
semidefinite, and $A\prec B$ means that $B-A$ is positive definite.

For $0<t\leq1$, let $\Sigma_t$ be the $4$-simplex with vertices
\[
\begin{aligned}
 p_0&=(0,0,0,0),&p_1&=(26,0,0,2\sqrt t),\\
 p_2&=(16,18,0,\sqrt t),&p_3&=(10,2,8,2\sqrt t),\\
 p_4&=(11,7,-8,\sqrt t).
\end{aligned}
\]
Set
\[
 A_{\hh,9}=
 \begin{pmatrix}26&16&10&11\\0&18&2&7\\0&0&8&-8\end{pmatrix},
 \qquad
 v_{\hh,9}=\begin{pmatrix}-1\\-1\\2\\2\end{pmatrix},
 \qquad
 q_{\hh,9}=\begin{pmatrix}2\\1\\2\\1\end{pmatrix}.
\]
Then $A_{\hh,9}v_{\hh,9}=0$, $q_{\hh,9}^{\tr}v_{\hh,9}=3$, and
\begin{equation}\label{eq:open-four-gram}
 G_{\hh,9}(t):=G(\Sigma_t)=H_{\hh,9}+tK_{\hh,9},
 \qquad H_{\hh,9}:=A_{\hh,9}^{\tr}A_{\hh,9},
 \qquad K_{\hh,9}:=q_{\hh,9}q_{\hh,9}^{\tr}.
\end{equation}
The edge matrix satisfies $\det X(\Sigma_t)=5616\sqrt t$, so these
simplices are nondegenerate for $t>0$.

Consider the marked word $\omega_{\hh,9}=(W_{\hh,9},\sigma_{\hh,9})$, where 
\begin{equation}\label{eq:open-four-word}
\begin{aligned}
 W_{\hh,9}:={}&(0\leftarrow1)(1\leftarrow2)(2\leftarrow3)
          (3\leftarrow4)(4\leftarrow1)\\
        & (1\leftarrow0)(0\leftarrow2)(2\leftarrow4)
          (4\leftarrow3),
 \qquad \sigma_{\hh,9}:=(0\,3).
\end{aligned}
\end{equation}
Its edge-matrix return is
\begin{equation}\label{eq:open-four-return-matrix}
 P_{\hh,9}:=P_{\omega_{\hh,9}}
 =\frac14I-\frac18v_{\hh,9}q_{\hh,9}^{\tr}
 =\frac18
 \begin{pmatrix}
 4&1&2&1\\2&3&2&1\\-4&-2&-2&-2\\-4&-2&-4&0
 \end{pmatrix}.
\end{equation}
In particular,
\[
 A_{\hh,9}P_{\hh,9}=\frac14A_{\hh,9},
 \qquad q_{\hh,9}^{\tr}P_{\hh,9}=-\frac18q_{\hh,9}^{\tr},
 \qquad \det P_{\hh,9}=-2^{-9},
\]
and hence
\begin{equation}\label{eq:open-four-return}
 \mathscr C_{P_{\hh,9}}(G_{\hh,9}(t))=\frac1{16}G_{\hh,9}(t/4).
\end{equation}

\begin{lemma}\label{lem:open-four-gaps}
The word $W_{\hh,9}$ is strictly LEB-admissible at $\Sigma_t$ for every
$0<t\leq1$. Its edge comparisons remain strict at the rank-three
limiting configuration $t=0$.
\end{lemma}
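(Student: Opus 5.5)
We follow the template of Lemma~\ref{lem:strict-hyperbolic-return}: reduce strict admissibility on $0<t\le1$ to finitely many sign checks of affine functions at the two endpoints $t=0$ and $t=1$.

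First, pass to squared-edge coordinates via $\mathcal I_4$. Since $G_{\hh,9}(t)=H_{\hh,9}+tK_{\hh,9}$ is affine in $t$, so is
\[
\mathbf d(t):=\mathcal I_4(G_{\hh,9}(t))=\mathbf d_\parallel+t\,\mathbf d_\perp .
\]
Here $\mathbf d_\parallel$ collects the squared distances between the columns of $A_{\hh,9}$, augmented by $0$ for the base vertex. The vector $\mathbf d_\perp$ has entries $q_i^2$ on edges $0i$ and $(q_i-q_j)^2$ on edges $ij$, i.e.\ $(4,1,4,1,1,0,1,1,0,1)$ in the order $01,02,03,04,12,13,14,23,24,34$.

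Next, write $W_{\hh,9}=b_1\cdots b_9$ and form the successive vectors $\mathbf d^{(k)}(t)=L_{b_k}\cdots L_{b_1}\mathbf d(t)$. The $d$-dimensional analogue of \eqref{eq:explicit_L} has the same three row types, with $k$ ranging over the three vertices outside $\{i,j\}$. Each $L_b$ is a fixed rational matrix, so every $\mathbf d^{(k)}(t)$ is again affine in $t$. Define the gap
\[
\delta_k(t):=d^{(k-1)}_{\underline{b_k}}(t)-\max_{e\ne\underline{b_k}}d^{(k-1)}_e(t).
\]
It is a minimum of finitely many affine functions, hence concave on $[0,1]$. Concavity gives $\delta_k(t)\ge(1-t)\delta_k(0)+t\,\delta_k(1)$. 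It therefore suffices to tabulate $\delta_k(0)$ and $\delta_k(1)$ for $k=1,\dots,9$ and show all eighteen numbers are strictly positive. The values $\delta_k(0)>0$ are exactly the second assertion, strictness at the rank-three limit. They also give $\delta_k(t)>0$ for all $0<t\le1$, which is strict admissibility.

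The computation is routine integer arithmetic. After scaling by suitable powers of $2$ (one factor of $4$ per step at most), every quantity is an integer. The work is tracking ten coordinates through nine bisections at two parameter values.

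The main obstacle is this bookkeeping, and specifically confirming the strict inequalities at $t=0$. In the three-dimensional examples many $\alpha_k$ were $0$, meaning ties occurred at the boundary. Here the statement asserts no ties, and that is the property needed later for openness. If some $\delta_k(0)$ turned out to vanish, the argument would fail as stated, and I would instead check positivity of the $t$-slope at that step. I expect the explicit table to show all gaps positive at both endpoints. I would present it as in Lemma~\ref{lem:strict-hyperbolic-return}, listing $k$, the scaling exponent, and the scaled gaps at $t=0$ and $t=1$.
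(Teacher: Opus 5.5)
Your proposal follows the paper's route: the paper likewise writes $\mathcal I_4(G_{\hh,9}(t))=(676+4t,580+t,\dots,282+t)^{\tr}$, with which your $\mathbf d_\parallel$ and $\mathbf d_\perp$ agree, and propagates the nine midpoint updates. It records each gap as an explicit affine function of $t$ with a single largest competitor on all of $[0,1]$, so your concavity/endpoint reduction is only a cosmetic variant. The computation you defer does succeed: every gap is already positive at $t=0$, the smallest being $9/2+t/16$ at the last bisection $4\leftarrow3$ against edge $04$, so your slope fallback is never needed.
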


\begin{proof}
In the edge order
\[
 01,02,03,04,12,13,14,23,24,34,
\]
the initial squared-edge vector is
\[
 \mathcal I_4(G_{\hh,9}(t))=
 \bigl(676+4t,580+t,168+4t,234+t,424+t,
       324,338+t,356+t,210,282+t\bigr)^{\tr}.
\]
The midpoint formulas~\eqref{eq:explicit_L}, in dimension four,
give the following largest competitors and squared-length gaps
throughout $0\leq t\leq1$:
\[
\begin{array}{c|c|c|c}
 k&b_k&\text{largest competing edge}&\text{gap}\\\hline
 1&0\leftarrow1&02&96+3t\\
 2&1\leftarrow2&23&68\\
 3&2\leftarrow3&02&23+t\\
 4&3\leftarrow4&13&48+3t/4\\
 5&4\leftarrow1&24&11\\
 6&1\leftarrow0&13&29/2+t/4\\
 7&0\leftarrow2&04&27+3t/16\\
 8&2\leftarrow4&12&59/4\\
 9&4\leftarrow3&04&9/2+t/16
\end{array}
\]
All entries are at least $9/2$.
\end{proof}

The strict inequalities at the limiting configuration allow an
open set of perturbations. The following gives explicit constants.

\begin{theorem}
\label{thm:open-four-dimensional}
Let
\[
 G_*:=G_{\hh,9}(1)=
 \begin{pmatrix}
 680&418&264&288\\418&581&198&303\\
 264&198&172&62\\288&303&62&235
 \end{pmatrix},
\]
and define
\begin{equation}\label{eq:open-four-neighborhood}
 \mathcal U_{\hh,9}:=
 \left\{G\in\Sym_4^{>0} \:\Big{|}\:
 \|G-G_*\|_{\max}<\frac1{100}\right\},
 \qquad \|D\|_{\max}:=\max_{i,j}|D_{ij}|.
\end{equation}
Every simplex with Gram matrix in $\mathcal U_{\hh,9}$ admits infinite
repetition of $\omega_{\hh,9}$. The resulting branch is strictly
LEB-admissible and degenerates. Consequently, the set of
$4$-dimensional similarity classes having a degenerating branch
contains a nonempty open set.
\end{theorem}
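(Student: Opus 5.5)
Let $G\in\mathcal U_{\hh,9}$. The plan is to decompose $G$ along the spectral structure of $P:=P_{\hh,9}$ and track the iterates $G_n:=16^n\,\mathscr C_{P}^n(G)$, showing that they stay in a compact region where all nine edge comparisons of Lemma~\ref{lem:open-four-gaps} stay strict, while the normalized volume tends to zero.

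Since $P=\frac14I-\frac18vq^{\tr}$ (with $v=v_{\hh,9}$, $q=q_{\hh,9}$, $q^{\tr}v=3$), $P$ acts as $\frac14$ on the hyperplane $\ker q^{\tr}$ and as $-\frac18$ on $\spn\{v\}$. On $\Sym_4$ the operator $16\,\mathscr C_P$ is therefore diagonalizable with eigenvalues $1$ (forms vanishing on $v$, i.e. of the form $A^{\tr}SA$ with $A=A_{\hh,9}$), $-\frac12$ (the mixed part), and $\frac14$ (multiples of $qq^{\tr}$). Equivalently, in the basis adapted to $\ker q^{\tr}\oplus\spn\{v\}$, write $G=H+E+s\,qq^{\tr}$, where $H$ is rank $\le3$ with kernel containing $v$, $E$ is mixed, and $s=v^{\tr}Gv/9$. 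Then $G_n=H+(-\tfrac12)^nE+4^{-n}s\,qq^{\tr}$. For $G$ near $G_*=H_{\hh,9}+K_{\hh,9}$ we have $H$ close to $H_{\hh,9}$, $E$ close to $0$, and $s$ close to $1$ (all controlled linearly by $\|G-G_*\|_{\max}<1/100$ with explicit constants). Note that $G_n$ are exactly the Gram matrices of the block endpoints up to similarity, by \eqref{eq:marked-model-compatibility}.

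Next I apply the squared-edge comparisons. Each gap $\delta_k$ in Lemma~\ref{lem:open-four-gaps} is a linear functional $\ell_k$ of the Gram matrix at the start of the block (composing the linear maps $L_{b_j}$ and $\mathcal I_4$). By the lemma, $\ell_k(H_{\hh,9}+tK_{\hh,9})\ge 9/2$ for all $t\in[0,1]$. For $G_n$ I write $\ell_k(G_n)=\ell_k(H_{\hh,9}+4^{-n}s K_{\hh,9})+\ell_k(H-H_{\hh,9})+(-\tfrac12)^n\ell_k(E)$. The first term is at least $9/2$ once $4^{-n}s\le1$; if $s$ is slightly above $1$, the $n=0$ case is handled via affinity in $t$ with the slope bounded by the table (slopes are at most $3$ in absolute value). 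The remaining terms are bounded by $C\|G-G_*\|_{\max}$, where $C$ is the $\ell^1$ norm of the coefficients of $\ell_k$ in Gram coordinates times the projection constants. So the main obstacle is an honest bound on these coefficient norms. The matrices $L_b$ have entries in $\{\pm\frac14,\frac12,1\}$, and the rows $\mathcal I$ entries at most $2$, so after nine steps one needs to show $C\cdot\frac1{100}<\frac92$. I would carry this out by explicitly computing the nine functionals (a finite rational computation) and the projections $G\mapsto(H,E,s)$, and checking the inequality. If that crude estimate fails, I would instead use the sharper, mode-by-mode computation of $\ell_k$ on the three eigenspaces.

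Given strict admissibility of every block, the branch is infinite and tie-free. For degeneration, $\det G_n=16^{4n}\cdot 2^{-18n}\det G=2^{-2n}\det G\to0$, while $G_n\to H$, which has positive diagonal entries. Hence the diameters stay bounded below and $\vartheta_4\to0$ at block endpoints. For intermediate prefixes $V$, I argue as in Theorem~\ref{thm:strict-hyperbolic-family}: $B_V^{\tr}G_nB_V\to B_V^{\tr}HB_V\neq0$. Finally, $\mathcal U_{\hh,9}$ is open in $\Sym_4^{>0}$ and invariant under neither scaling nor relabeling, but its image in $\PP\Sym_4^{>0}$ is open, which gives the last assertion.
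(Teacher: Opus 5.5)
\textbf{There is a genuine gap: the quantitative step the statement depends on is left undone.}

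Your skeleton matches the paper's, and those parts are correct:
\begin{itemize}
\item the splitting of $4P_{\hh,9}$ into the complementary projections $E_{\hh,9}=I-F_{\hh,9}$ and $F_{\hh,9}=\frac13v_{\hh,9}q_{\hh,9}^{\tr}$, which gives your three modes with factors $1,-\frac12,\frac14$;
\item comparison with the reference family of Lemma~\ref{lem:open-four-gaps};
\item degeneration from $\det G_n=2^{-2n}\det G$ with a rank-three limit, plus the prefix argument;
\item openness via projectivization.
\end{itemize}

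The gap is the step you call ``the main obstacle'' and then postpone. You must show that for $\|G-G_*\|_{\max}<\frac1{100}$, no comparison at any phase of any block moves by as much as the margin $\frac92$. The radius $\frac1{100}$ is part of the statement, so this is its real content.

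The route you sketch does not deliver it, because entrywise bounds on the $L_b$ compound along the word:
\begin{itemize}
\item Rows of $L_{i\leftarrow j}$ have $\ell^1$-norm up to $\frac54$. So a phase-$9$ comparison functional has a coefficient row whose $\ell^1$-norm is a priori only bounded by $2(\frac54)^8\approx12$.
\item Since $|\mathcal I_4(D')_e|\le4\|D'\|_{\max}$, a Gram perturbation $D'$ can then move that comparison by up to about $48\|D'\|_{\max}$.
\item Since $\|E_{\hh,9}\|_1=5$, your $H$-mode $H-H_{\hh,9}=E_{\hh,9}^{\tr}(G-G_*)E_{\hh,9}$ alone can have entries of size $25\|G-G_*\|_{\max}$.
\item Together these give a possible error of $48\cdot25\cdot\frac1{100}=12>\frac92$, before the other two modes are counted.
\end{itemize}
Your fallback (compute the functionals exactly, or mode by mode) is not carried out. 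The margin is too thin to take it on faith: the paper's estimate gives $3.92$ against $\frac92$. As written, your argument only shows that some smaller, unspecified ball around $G_*$ works. That suffices for the final ``consequently'' clause, but not for $\mathcal U_{\hh,9}$ as defined.

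\textbf{The missing idea} is a phase-independent bound obtained in edge-vector coordinates, rather than by composing squared-edge matrices.
\begin{itemize}
\item Every vertex produced by a prefix of $W_{\hh,9}$ is a convex combination of the five vertices at the start of the block.
\item Hence every edge at every phase has a coefficient vector $a$ in the starting edge basis with $\|a\|_1\le2$.
\item A Gram perturbation $D'$ then changes its squared length $a^{\tr}Ga$ by at most $\|a\|_1^2\|D'\|_{\max}\le4\|D'\|_{\max}$. Any comparison changes by at most $8\|D'\|_{\max}$, with no growth along the word.
\item We have $(4P_{\hh,9})^n=I+((-\frac12)^n-1)F_{\hh,9}$ and $\|F_{\hh,9}\|_1=4$, so $\|(4P_{\hh,9})^n\|_1\le7$. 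The normalized block-$n$ perturbation therefore has max-norm at most $49\|G-G_*\|_{\max}$.
\item Combining these gives the uniform error bound $392\cdot\frac1{100}<\frac92$. No separate treatment of $s$ is needed, since one compares directly with $H_{\hh,9}+4^{-n}K_{\hh,9}$.
\end{itemize}

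This uniform bound also repairs a second, smaller slip. The gap at phase $k$ is a minimum over nine competitors, not a single linear functional $\ell_k$, and under perturbation the largest competitor may change. So all $81$ selected-versus-competitor comparisons must be controlled. Each is at least $\frac92$ at the reference, because the tabulated largest competitor dominates the others.
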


\begin{proof}
Put
\[
 Q_{\hh,9}:=4P_{\hh,9},
 \qquad F_{\hh,9}:=\frac13v_{\hh,9}q_{\hh,9}^{\tr},
 \qquad E_{\hh,9}:=I-F_{\hh,9}.
\]
Since $q_{\hh,9}^{\tr}v_{\hh,9}=3$, the matrices $E_{\hh,9},F_{\hh,9}$ are complementary
projections, of ranks three and one, respectively. Therefore
\begin{equation}\label{eq:open-four-normalized-powers}
 Q_{\hh,9}=E_{\hh,9}-\frac12F_{\hh,9},
 \qquad
 Q_{\hh,9}^n=E_{\hh,9}+\left(-\frac12\right)^nF_{\hh,9}.
\end{equation}
Let $\|\cdot\|_1$ denote the maximum absolute column sum. Since
$\|F_{\hh,9}\|_1=4$, the identity
\[
 Q_{\hh,9}^n=I+\bigl((-1/2)^n-1\bigr)F_{\hh,9}
\]
gives $\|Q_{\hh,9}^n\|_1\leq7$ for all $n\geq0$.

For $G\in\mathcal U_{\hh,9}$, the normalized Gram matrix at the start
of block $n$ is
\[
 \widehat G_n:=(Q_{\hh,9}^n)^{\tr}GQ_{\hh,9}^n.
\]
The reference matrix at that stage is
$(Q_{\hh,9}^n)^{\tr}G_*Q_{\hh,9}^n=H_{\hh,9}+4^{-n}K_{\hh,9}$, and
\begin{equation}\label{eq:open-four-error}
 \|\widehat G_n-(H_{\hh,9}+4^{-n}K_{\hh,9})\|_{\max}
 \leq49\|G-G_*\|_{\max}<\frac{49}{100}.
\end{equation}
Every vertex produced by a prefix of $W_{\hh,9}$ is a convex combination
of the five vertices at the start of the block. Thus every edge has
a coefficient vector $a$ in the starting edge basis with
$\sum_i|a_i|\leq2$. Under a Gram perturbation $D$, its squared
length changes by at most $4\|D\|_{\max}$, and a comparison
between two squared lengths changes by at most
$8\|D\|_{\max}$. Equivalently, a perturbation of size $\eta$ at
the initial marked Gram matrix changes every comparison at any
normalized block by at most
\begin{equation}\label{eq:four-uniform-comparison-error}
 8\cdot49\eta=392\eta.
\end{equation}
By Lemma~\ref{lem:open-four-gaps} and
\eqref{eq:open-four-error}, every prescribed gap for $G$ is greater
than
\[
 \frac92-\frac{392}{100}=\frac{29}{50}>0.
\]
This proves strict admissibility of every block.

By~\eqref{eq:open-four-normalized-powers},
\[
 \widehat G_n\longrightarrow E_{\hh,9}^{\tr}GE_{\hh,9}.
\]
The limit has rank three and positive diameter. Also
$|\det Q_{\hh,9}|=1/2$, so the normalized $4$-volume decreases by the
factor $2^{-n}$. Thus the branch degenerates at the block
boundaries. For every fixed prefix $V$ of $W_{\hh,9}$, the intermediate
normalized Gram matrices converge to
$B_V^{\tr}E_{\hh,9}^{\tr}GE_{\hh,9}B_V$. This again has rank three, since
$B_V$ is invertible, while the corresponding normalized volumes
tend to zero. The finitely many prefixes therefore give degeneration
of the entire branch.

Finally, positive projectivization is an open map. Hence the image
of $\mathcal U_{\hh,9}$ is a nonempty open subset of shape space. In
particular, this proves Theorem~\ref{thm:main-degeneration3}.
\end{proof}

\begin{lemma}\label{lem:lift-face-branch}
Let $\Delta$ be a $k$-dimensional face of a nondegenerate
$d$-simplex $\Sigma$, where $2\leq k<d$. Every degenerating
LEB branch of $\Delta$ can be lifted to a degenerating LEB branch
of $\Sigma$, allowing arbitrary choices between tied longest
edges.
\end{lemma}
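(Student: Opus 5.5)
The plan is to run the given branch of $\Delta$ inside $\Sigma$, inserting between its steps finitely many auxiliary LEB-bisections that never touch $\Delta$. Write $\Delta=\Delta_0,\Delta_1,\Delta_2,\dots$ for the degenerating branch of $\Delta$, spanned by the vertex set $F$ of the current face. We maintain the invariant that the current simplex $\Sigma_m$ of the lifted branch has a $k$-face equal (as a point set, with labels) to the current $\Delta_n$.

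At each stage, look at a longest edge of $\Sigma_m$ and split into two cases.

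\textbf{Case 1: some longest edge of $\Sigma_m$ lies in $F$.} Then it is also a longest edge of the face $\Delta_n$. Since the face's diameter is at most that of $\Sigma_m$, a longest edge of $\Sigma_m$ lying in $F$ is longest in $\Delta_n$. Conversely, if the edge prescribed by the branch of $\Delta$ is longest in $\Sigma_m$ (possibly tied), we bisect that edge. We choose the child whose replaced vertex is the one replaced in $\Delta_n\to\Delta_{n+1}$. The new simplex then contains $\Delta_{n+1}$ as a face, because bisecting an edge of $F$ acts on the face exactly as the corresponding bisection of $\Delta_n$. If instead a longest edge of $\Sigma_m$ lies in $F$ but is not the prescribed one, that edge is also longest in $\Delta_n$. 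Then the prescribed edge, being longest in $\Delta_n$ (ties allowed), is equally long, so it is longest in $\Sigma_m$ too. Hence the prescribed edge is always an admissible choice in Case 1, using that arbitrary tie choices are allowed.

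\textbf{Case 2: every longest edge of $\Sigma_m$ has an endpoint outside $F$.} Bisect such an edge $ij$ with $j\notin F$, and take the child $j\leftarrow i$, which replaces the vertex outside $F$. The face spanned by $F$ is then unchanged, so the invariant persists.

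The main obstacle is to show that Case 2 cannot occur infinitely often in a row; otherwise the lifted branch would stall and never advance along $\Delta$'s branch. For this I would invoke the diameter-contraction result of Kearfott \cite{Kearfott1978}: along any LEB branch of a $d$-simplex, after a number of bisections depending only on $d$, the diameter shrinks by a fixed factor $c_d<1$. If Case 2 persisted forever from some $\Sigma_m$ on, the diameters would tend to $0$. But they are bounded below by $\diam(\Delta_n)>0$, since the face $\Delta_n$ stays fixed throughout. This is a contradiction, so after finitely many Case 2 steps we return to Case 1, and the whole branch of $\Delta$ is realized. If I want to avoid citing Kearfott in exactly this form, I would instead try the elementary argument directly. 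The vertices outside $F$ only move to midpoints of longest edges, so the maximal length among edges with an endpoint outside $F$ should strictly decrease by a definite factor over every block of $\binom{d+1}{2}$ consecutive Case 2 steps.

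Finally, degeneration follows from the bound
\[
\vol_d(\Sigma_m)\le \vol_k(\Delta_n)\,\diam(\Sigma_m)^{d-k}.
\]
To see it, cone off successive vertices: each added vertex contributes a height factor at most the diameter, divided by a dimension factor of at least $1$. Together with $\diam(\Sigma_m)\ge\diam(\Delta_n)$, this gives
\[
\vartheta_d(\Sigma_m)\le \frac{\vol_k(\Delta_n)}{\diam(\Sigma_m)^k}\le\vartheta_k(\Delta_n).
\]
Since $n\to\infty$ along the lifted branch, and each $n$ occurs for only finitely many $m$, we get $\vartheta_d(\Sigma_m)\to0$. This holds including during the intermediate Case 2 steps, so the lifted branch degenerates.
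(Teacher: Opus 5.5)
Your proposal is correct and follows essentially the same route as the paper. Both insert auxiliary bisections of longer ambient edges that keep the current face intact (your Case 1/Case 2 split is equivalent to the paper's test of whether the prescribed edge $e$ is longest in the ambient simplex), and both use Kearfott's diameter convergence to show each waiting stage is finite. Both then conclude with the cone-volume bound $\vartheta_d(\Sigma)\le\vartheta_k(\Delta)$; you simply drop the harmless factor $k!/d!$ that appears in the paper's version.
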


\begin{proof}
Suppose that the current ambient simplex contains the current
face $\Delta$, and let $e$ be the next edge prescribed by its
branch. If $e$ is not the longest edge in the ambient simplex, choose a
longer ambient longest edge $f$, bisect it, and retain the child
containing all vertices of $\Delta$. Such a child exists because
$f$ cannot be an edge of $\Delta$.

This waiting stage is finite. Otherwise an infinite LEB branch
would retain a fixed face of positive diameter, contradicting
Kearfott's diameter-convergence theorem
\cite[Theorem~3.1]{Kearfott1978}. Once $e$ is longest in the ambient
simplex, perform its prescribed directed bisection. Repetition
lifts the whole face branch.

If $\Delta=[p_0,\ldots,p_k]$ is the current distinguished face of
$\Sigma=[p_0,\ldots,p_d]$, the exterior-product volume formula gives
\[
 d!\,\vol_d(\Sigma)
 \leq k!\,\vol_k(\Delta)\prod_{j=k+1}^d|p_j-p_0|
 \leq k!\,\vol_k(\Delta)\diam(\Sigma)^{d-k}.
\]
Consequently,
\begin{equation}\label{eq:face-quality-bound}
 \vartheta_d(\Sigma)
 \leq\frac{k!}{d!}
      \frac{\vol_k(\Delta)}{\diam(\Sigma)^k}
 \leq\frac{k!}{d!}\vartheta_k(\Delta).
\end{equation}
The index of the distinguished face along its branch tends to
infinity, since each waiting stage is finite. Thus the displayed
bound proves degeneration, including during the waiting stages.
\end{proof}

\begin{lemma}\label{lem:full-tree-tie-free}
In every dimension $d\geq2$, the initial Gram matrices having
some descendant with two equal edge lengths are contained in a
countable union of proper rational hyperplanes. In particular,
their complement is dense and has full Lebesgue measure in
$\Sym_d^{>0}$. If the initial squared-edge coordinates are linearly
independent over $\Q$, every descendant has pairwise distinct
edge lengths, even under arbitrary directed bisections.
\end{lemma}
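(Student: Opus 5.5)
The plan is to track every descendant's squared-edge vector as a rational linear image of the initial one. Any finite directed bisection word $W$, arbitrary and not necessarily LEB-admissible, acts on squared-edge coordinates by $\mathbf d(W\Delta)=L_W\mathbf d(\Delta)$, where $L_W$ is a product of the matrices $L_b$ from \eqref{eq:explicit_L}. Each such matrix has rational entries, so $L_W\in\GL(\binom{d+1}{2},\Q)$. Relabelings only permute coordinates, and every descendant arises from some word, so the set of descendant squared-edge vectors lies in the countable family $\{L_W\mathbf d\}_{W\in\mathcal B^*}$. For fixed $W$ and a fixed pair of distinct edges $e\neq f$, the condition ``$e$ and $f$ have equal length in $W\Delta$'' is the linear equation
\[
 \ell_{W,e,f}(\mathbf d):=(r_e-r_f)L_W\mathbf d=0,
\]
and $\ell_{W,e,f}$ has rational coefficients. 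Transporting through the rational isomorphism $\mathcal I_d^{-1}$ gives a rational linear condition on the Gram matrix.

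The key step is to show that each such functional is nonzero, so that its zero set is a proper hyperplane. Since $L_W$ is invertible (its determinant is a product of nonzero factors; indeed $L_b$ is triangular-like with diagonal entries $1/4$, $1/2$ and $1$), the row vector $(r_e-r_f)L_W$ is nonzero whenever $r_e-r_f\neq0$, which holds for $e\neq f$. To confirm invertibility directly from \eqref{eq:explicit_L}, I would observe that $L_{i\leftarrow j}$ can be inverted explicitly. First, $d_{ij}=4d'_{ij}$. Next, the edges $jk$ are unchanged, so $d_{ik}=2d'_{ik}+\frac12 d_{ij}-d_{jk}$ is recovered. Therefore the hyperplane $\{\ell_{W,e,f}=0\}$ is proper.

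The bad set is then the union of countably many proper rational hyperplanes, indexed by words $W$ and edge pairs $(e,f)$. A proper hyperplane has Lebesgue measure zero and is closed and nowhere dense. A countable union therefore has measure zero, and its complement in the open set $\Sym_d^{>0}$ has full measure and is dense, for instance by Baire's theorem or simply because it has full measure in an open set. This proves the first two assertions.

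For the last assertion, suppose the coordinates $d_e$ of $\mathbf d$ are linearly independent over $\Q$. Since $\ell_{W,e,f}$ is a nonzero vector of rational coefficients, $\ell_{W,e,f}(\mathbf d)\neq0$. Hence no descendant under any word has two equal squared edges, and so no two equal edge lengths. The only genuine obstacle is the nonvanishing of $\ell_{W,e,f}$, and invertibility of $L_W$ settles it. One could worry that ``descendant'' includes relabelings or the tie choices of LEB, but all of these are covered by arbitrary words, since every LEB descendant is in particular some $W\Delta$ up to relabeling.
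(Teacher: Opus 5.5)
Your proposal is correct and follows the paper's proof essentially verbatim. Descendant edge equalities are the conditions $(r_e-r_f)L_W\mathbf{d}=0$, which are nonzero because $L_W$ is invertible, and pulling back by $\mathcal I_d$ gives countably many proper rational hyperplanes. The $\Q$-independence assertion then follows because a nonzero rational form cannot vanish on $\Q$-independent coordinates. Your explicit inversion of $L_{i\leftarrow j}$ spells out the invertibility that the paper simply takes from the setup.
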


\begin{proof}
For every finite directed word $V$, its squared-edge matrix is
$L_V\in\GL(\binom{d+1}{2},\Q)$. Equality between distinct
edges $e,f$ of the descendant is the equation
\[
 (r_e-r_f)L_V\mathbf{d}=0.
\]
The coefficient row is nonzero because $L_V$ is invertible.
Pulling back by $\mathcal I_d$ therefore gives a proper rational
hyperplane in Gram space. There are only countably many words
and edge pairs. The last assertion follows because a nonzero
rational linear form cannot vanish on a vector with
$\Q$-linearly independent coordinates.
\end{proof}

\begin{corollary}\label{cor:open-all-dimensions}
For every $d\geq4$, the set of $d$-dimensional similarity classes
having a degenerating LEB branch contains a nonempty open set.
Within this open set, a dense full-measure subset consists of
simplices whose entire refinement trees have pairwise distinct
edge lengths at every node.
\end{corollary}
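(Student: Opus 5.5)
The plan is to combine Theorem~\ref{thm:open-four-dimensional} with Lemma~\ref{lem:lift-face-branch} and Lemma~\ref{lem:full-tree-tie-free}. For $d=4$ the first assertion is Theorem~\ref{thm:open-four-dimensional}. For $d>4$ I will consider the set $\mathcal V_d$ of labeled $d$-simplices $\Sigma=[p_0,\dots,p_d]$ whose face $[p_0,\dots,p_4]$ has Gram matrix (based at $p_0$) in the cone $\R_{>0}\mathcal U_{\hh,9}$. Multiplying by positive scalars is harmless, because $\mathcal U_{\hh,9}$ was only used projectively: the admissibility gaps and degeneration are scale invariant, so every similarity class represented in $\R_{>0}\mathcal U_{\hh,9}$ still repeats $\omega_{\hh,9}$ indefinitely with a degenerating branch.

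First I will check that $\mathcal V_d$ is open and nonempty. The face Gram matrix is the upper-left $4\times4$ block of $G(\Sigma)\in\Sym_d^{>0}$, so restricting to it is a continuous linear projection. The preimage of an open set is therefore open. It is nonempty because any $G'\in\mathcal U_{\hh,9}$ extends to a positive-definite $d\times d$ block-diagonal matrix $\mathrm{diag}(G',I)$. Openness passes to shape space, since positive projectivization is an open map (as in the proof of Theorem~\ref{thm:open-four-dimensional}), and then to the quotient by relabelings.

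Next, for each $\Sigma\in\mathcal V_d$, the $4$-face $\Delta=[p_0,\dots,p_4]$ has a degenerating LEB branch by Theorem~\ref{thm:open-four-dimensional}. Lemma~\ref{lem:lift-face-branch} with $k=4<d$ lifts this branch to a degenerating LEB branch of $\Sigma$, which proves the first assertion. The lift may need tie choices during the waiting stages, and Lemma~\ref{lem:lift-face-branch} explicitly allows arbitrary such choices. For the second assertion, Lemma~\ref{lem:full-tree-tie-free} says that the Gram matrices with a tie somewhere in their full tree lie in a countable union of proper hyperplanes. Intersecting the open set $\mathcal V_d$ with the complement therefore leaves a dense, full-measure subset of $\mathcal V_d$. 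Since the hyperplane union is invariant under scaling up to the same union (these are linear conditions), this passes to the projectivized shape space, with measure taken as the induced measure class. On this subset every node has pairwise distinct edge lengths, so the lifted branch never actually meets a tie.

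The main obstacle is making sure that Lemma~\ref{lem:lift-face-branch} applies verbatim to a branch given on a labeled face inside a larger simplex. In particular, the face branch of Theorem~\ref{thm:open-four-dimensional} involves relabelings $\sigma_{\hh,9}$ between blocks. I would handle this by noting that relabelings act only on names, not on geometry, so the face branch is simply a sequence of bisections of the face's edges, each longest within the face. The lemma's waiting argument only needs the face edge to become longest in the ambient simplex, and Kearfott's theorem guarantees that this happens.
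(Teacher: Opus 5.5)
Your proposal is correct and follows essentially the paper's own argument: prescribe a labeled $4$-face with Gram matrix in $\mathcal U_{\hh,9}$, lift the branch from Theorem~\ref{thm:open-four-dimensional} via Lemma~\ref{lem:lift-face-branch}, and obtain the dense full-measure tie-free subset from Lemma~\ref{lem:full-tree-tie-free}. Your enlargement to the cone $\R_{>0}\mathcal U_{\hh,9}$ gives the same image in shape space; the right justification is that LEB is similarity invariant, not that $\mathcal U_{\hh,9}$ ``was only used projectively'', since its definition uses absolute bounds. Your other additions (openness via the upper-left $4\times4$ block, nonemptiness via $\mathrm{diag}(G',I)$, and the irrelevance of the relabelings $\sigma_{\hh,9}$ to the lifting) simply spell out what the paper leaves implicit.
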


\begin{proof}
For $d>4$, prescribe a labeled $4$-face with Gram matrix in
$\mathcal U_{\hh,9}$ and allow the remaining vertices to vary subject
only to nondegeneracy. This is a nonempty open condition. Apply
Theorem~\ref{thm:open-four-dimensional} and
Lemma~\ref{lem:lift-face-branch}. The assertion about ties follows
from Lemma~\ref{lem:full-tree-tie-free}, and passes to projective
shape space and its finite unlabeled quotient.
\end{proof}

An open set alone does not imply the probabilistic statement in
Corollary~\ref{cor:higher-dimensional-fsc}: a fixed Gaussian
$4$-face in high ambient dimension concentrates near the regular
shape. We therefore record a conical family of degenerating
simplices accumulating at that shape.

\begin{lemma}\label{lem:regular-four-cone}
In the edge order used above, put
\[
 \beta:=(-3,-7,1,-7,3,5,5,3,7,5)^{\tr},
 \qquad \mathbf1:=(1,\ldots,1)^{\tr}\in\R^{10}.
\]
Every squared-edge vector in 
\begin{equation}\label{eq:regular-four-cone}
 \mathcal K:=
 \left\{\mathbf1+\varepsilon(\beta+h) \;\Big|\;
 0<\varepsilon\leq10^{-7},\quad
 h\in\R^{10},\quad\|h\|_\infty<\frac12\right\}
\end{equation}
represents a nondegenerate $4$-simplex with a strictly admissible
degenerating LEB branch. The set $\mathcal K$ is open in
squared-edge coordinates.
\end{lemma}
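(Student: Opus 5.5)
The plan is to reduce the statement to a perturbative version of the open-set argument in Theorem~\ref{thm:open-four-dimensional}, now centered at the regular simplex $\mathbf1$ rather than at $G_*$. I would exhibit a single marked word $\omega_\beta=(V\,W,\sigma)$ and follow a vector $\mathbf d=\mathbf1+\varepsilon(\beta+h)$ through it. Here $V$ is a finite ``entry'' prefix and $(W,\sigma)$ is a hyperbolic return block with spectral data of the type in \S\ref{subsec:hyperbolic-method}. The natural first candidate for the block is $\omega_{\hh,9}$ up to relabeling; otherwise I would take a block found by the same search.

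\emph{Nondegeneracy and openness.} These are the easy preliminaries. Since $\mathbf1$ lies in the interior of the tetrahedral-type cone $\mathcal I_4(\Sym_4^{>0})$, a ball of fixed radius around it consists of squared-edge vectors of nondegenerate $4$-simplices. Because $\|\beta+h\|_\infty<8$ and $\varepsilon\le10^{-7}$, all of $\mathcal K$ lies in that ball. For openness, observe that $\mathcal K-\mathbf1$ is the set of vectors $\varepsilon(\beta+h)$ with $\varepsilon\in(0,10^{-7}]$ and $h$ in an open box. A point with $\varepsilon=10^{-7}$ can be rewritten with a slightly smaller $\varepsilon'$ and a slightly perturbed $h'$, still in the open box, because $\beta+h\neq0$. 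Hence $\mathcal K$ is a union of open sets.

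\emph{Admissibility.} Each comparison along the word is a linear form
\[
\ell(\mathbf d)=(r_e-r_f)L_U\mathbf d,\qquad U\ \text{a prefix},
\]
so $\ell(\mathbf d)=\ell(\mathbf1)+\varepsilon\,\ell(\beta+h)$. I would tabulate the comparisons in two classes. In the first class, $\ell(\mathbf1)>0$, meaning the descendant of the regular simplex already has a strictly longest prescribed edge. There $\varepsilon\le10^{-7}$ with a crude bound $|\ell(\beta+h)|\le\|\ell\|_1\cdot 8$ keeps $\ell(\mathbf d)>0$. In the second class, $\ell(\mathbf1)=0$, i.e.\ a tie at the regular configuration, which certainly occurs at the first step and at its symmetric descendants. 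There I need $\ell(\beta)>\tfrac12\|\ell\|_1$, so that the open box $\|h\|_\infty<\tfrac12$ cannot flip the sign. The vector $\beta$ is presumably chosen exactly so that these first-order margins hold. Verifying this is a finite table, as in Lemmas~\ref{lem:strict-hyperbolic-return} and~\ref{lem:open-four-gaps}.

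\emph{Return and degeneration.} After the entry prefix $V$, the normalized Gram matrix equals $B_V^{\tr}\mathcal I_4^{-1}(\mathbf1)B_V$ up to an $O(\varepsilon)$ error. I want this leading shape, projectively and after relabeling, to lie within the $\tfrac1{100}$-neighbourhood $\mathcal U_{\hh,9}$, or within an analogous explicit neighbourhood of the chosen block's reference point. Once there, the argument of Theorem~\ref{thm:open-four-dimensional} applies verbatim: the powers $Q^n=E+(-\tfrac12)^nF$ are uniformly bounded, the comparison errors stay below the gaps, and the limit $E^{\tr}GE$ has rank three. Degeneration of the whole branch, including the finitely many intermediate prefixes, then follows exactly as there.

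\emph{Main obstacle.} The hardest step is making the ``leading shape lands in the basin'' claim true. In general $B_V^{\tr}\mathcal I_4^{-1}(\mathbf1)B_V$ need not be close to $G_*$. If it is not, I would instead build the block so that $\mathbf1$ itself, or the shape reached from it after $V$, lies on the neutral, rank-three side $E$ of the return. The hyperbolic contraction then only has to act on the $O(\varepsilon)$ transverse component, and I would compute the gaps along the entire infinite orbit of $\mathbf1$ under the block, not just a single block. Bounding those gaps uniformly in $n$ via $\|Q^n\|_1\le C$ is the technical core.
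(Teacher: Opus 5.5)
Your overall architecture matches the paper's: an entry prefix $V$ starting near the regular simplex, a relabeling, and then $\omega_{\hh,9}$ repeated forever. Your two-class treatment of the prefix comparisons is essentially the certificate \eqref{eq:regular-prefix-certificate}: a comparison is either strict at $\mathbf1$, or tied at $\mathbf1$ and broken by $\beta$ with enough margin to absorb $\|h\|_\infty<\tfrac12$. Your nondegeneracy and openness arguments are also fine; openness is even simpler, since each $\varepsilon$-slice of $\mathcal K$ is an open cube.

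\textbf{The tail.} You correctly flag the tail as the core, but you do not resolve it, and both routes you sketch fail as stated.

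Landing projectively in $\mathcal U_{\hh,9}$ is not what happens, and there is no reason it can be arranged. That set is a tiny neighbourhood of the single matrix $G_*$ (entrywise tolerance $1/100$ on entries up to $680$). The post-prefix reference $G_{\mathrm r}$ of \eqref{eq:regular-terminal-gram} is far from every multiple of $G_*$; compare diagonal ratios. Even its limit $E_{\hh,9}^{\tr}G_{\mathrm r}E_{\hh,9}$ is not proportional to $H_{\hh,9}$. This is possible because the neutral eigenspace of $\mathscr C_{Q_{\hh,9}}$ is six-dimensional, so each orbit converges to its own rank-three limit.

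Your fallback, placing the post-prefix shape ``on the neutral rank-three side'' so that only an $O(\varepsilon)$ transverse part is contracted, is impossible. That Gram matrix $G$ is positive definite, so $F_{\hh,9}^{\tr}GF_{\hh,9}\neq0$, and the transverse part is of order one.

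What must be shown is that the unperturbed reference orbit $(Q_{\hh,9}^n)^{\tr}G_{\mathrm r}Q_{\hh,9}^n$ has all $81$ block comparisons positive, uniformly in $n$. The bound $\|Q_{\hh,9}^n\|_1\le7$ controls perturbations, not the sign of these reference gaps. The missing step is the exact reduction through \eqref{eq:open-four-normalized-powers}. Each reference gap is a quadratic $\alpha+\gamma z+\eta z^2$ in $z=(-1/2)^n$. The infinite family therefore reduces to two finite checks, $\alpha+\gamma+\eta>0$ (for $n=0$) and $\alpha-|\gamma|/2-|\eta|/4>0$ (for $n\ge1$), which is \eqref{eq:regular-tail-certificate}.

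\textbf{Transfer of the perturbation.} A smaller omission is carrying the $\varepsilon$-perturbation through the prefix. ``Up to $O(\varepsilon)$'' does not produce the constant $10^{-7}$. A naive propagation through $100$ bisections by operator norms, followed by renormalization by $2^{56}/245$, loses everything. The paper uses a scale-free bound instead. The relation $-90\varepsilon\Gamma\preceq G_{\mathrm{in}}-\Gamma\preceq90\varepsilon\Gamma$ is preserved by congruence with $P_V$ and by normalization. This gives $\|G'-G_{\mathrm r}\|_{\max}\le90\varepsilon$, and then \eqref{eq:four-uniform-comparison-error} leaves every gap at least $59/8820-35280\varepsilon>0$.

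Finally, because the lemma fixes $\beta$ and $10^{-7}$, its proof is a finite certificate. Without the explicit $V$, $\sigma_V$, and the two comparison tables, your text describes how such a proof would be organized rather than proving the statement.
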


\begin{proof}
Let $V=c_1\cdots c_{100}$ be the word whose letters are listed
below, read from left to right and then from top to bottom:
\[
\begin{array}{cccccccccc}
 4\leftarrow2&1\leftarrow3&2\leftarrow3&3\leftarrow0&0\leftarrow1&1\leftarrow4&3\leftarrow4&0\leftarrow4&4\leftarrow2&2\leftarrow0\\
 0\leftarrow4&1\leftarrow4&4\leftarrow3&1\leftarrow3&3\leftarrow2&2\leftarrow0&0\leftarrow3&3\leftarrow4&1\leftarrow2&2\leftarrow4\\
 1\leftarrow4&0\leftarrow4&4\leftarrow3&3\leftarrow2&4\leftarrow2&2\leftarrow0&1\leftarrow3&3\leftarrow0&0\leftarrow4&3\leftarrow4\\
 4\leftarrow2&2\leftarrow1&1\leftarrow0&2\leftarrow0&0\leftarrow4&1\leftarrow4&4\leftarrow3&3\leftarrow2&2\leftarrow4&3\leftarrow0\\
 0\leftarrow4&1\leftarrow4&4\leftarrow3&1\leftarrow3&0\leftarrow2&3\leftarrow4&2\leftarrow4&0\leftarrow4&4\leftarrow1&1\leftarrow3\\
 3\leftarrow2&2\leftarrow0&1\leftarrow0&3\leftarrow0&4\leftarrow3&2\leftarrow0&0\leftarrow1&2\leftarrow1&3\leftarrow4&4\leftarrow1\\
 0\leftarrow1&2\leftarrow1&1\leftarrow3&3\leftarrow0&4\leftarrow0&0\leftarrow1&2\leftarrow1&4\leftarrow1&1\leftarrow3&3\leftarrow2\\
 1\leftarrow0&0\leftarrow2&4\leftarrow2&1\leftarrow2&2\leftarrow3&0\leftarrow3&4\leftarrow3&3\leftarrow1&4\leftarrow1&2\leftarrow0\\
 1\leftarrow3&0\leftarrow4&4\leftarrow3&2\leftarrow3&1\leftarrow3&3\leftarrow0&2\leftarrow0&0\leftarrow4&4\leftarrow2&2\leftarrow1\\
 3\leftarrow0&0\leftarrow1&3\leftarrow1&4\leftarrow1&1\leftarrow2&0\leftarrow2&2\leftarrow4&3\leftarrow1&0\leftarrow4&1\leftarrow4
\end{array}
\]
There are no intermediate relabelings. After $V$, apply the
permutation $\sigma_V$ given by
\[
 (p_0,p_1,p_2,p_3,p_4)\mapsto(p_4,p_2,p_3,p_0,p_1),
\]
and thereafter repeat $\omega_{\hh,9}$.

We give finite rational certificates for both parts of this
branch. For a selected-edge comparison in the prefix, let
\[
 r=(r_{\underline{c_j}}-r_e)L_{c_{j-1}}\cdots L_{c_1},
 \qquad A_r:=r\mathbf1,
 \qquad B_r:=r\beta,
 \qquad N_r:=\sum_{i=1}^{10}|r_i|.
\]
There are $900$ such rows. Successive use of the midpoint
formulas gives
\begin{equation}\label{eq:regular-prefix-certificate}
\begin{gathered}
 A_r\geq0,\\
 A_r=0\ \Longrightarrow\ B_r\geq N_r,\\
 A_r>0\ \Longrightarrow\
 A_r+\frac2{353}\left(B_r-\frac12N_r\right)\geq0.
\end{gathered}
\end{equation}
In particular, for $\|h\|_\infty<1/2$,
\[
 r\bigl(\mathbf1+\varepsilon(\beta+h)\bigr)
 \geq A_r+\varepsilon(B_r-N_r/2)>0
 \qquad(0<\varepsilon<2/353).
\]
Thus the prefix is strictly admissible on~\eqref{eq:regular-four-cone}.

The regular initial Gram matrix is
$\Gamma=\tfrac12(I+J)$, where $J$ is the $4\times4$ all-ones
matrix. With $P_V:=B_VR_{\sigma_V}$, the terminal reference
matrix satisfies
\begin{equation}\label{eq:regular-terminal-gram}
 G_{\mathrm r}:=\frac{2^{56}}{245}P_V^{\tr}\Gamma P_V
 =\frac1{245}
 \begin{pmatrix}
 245&155&123&103\\155&197&85&105\\
 123&85&85&41\\103&105&41&77
 \end{pmatrix}.
\end{equation}
In particular, $G_{\mathrm r}>0$ and
$\det G_{\mathrm r}=1048576/720600125$.

Consider a comparison in the $k$-th phase of $W_{\hh,9}$. Let $a,b$ be
the coefficient vectors of the selected and competing edges in
the edge basis at the start of that block. At the normalized
reference return $(Q_{\hh,9}^n)^{\tr}G_{\mathrm r}Q_{\hh,9}^n$, the gap is
\[
 \alpha+\gamma z+\eta z^2,
 \qquad z=(-1/2)^n,
\]
where
\[
\begin{aligned}
 \alpha&=(E_{\hh,9}a)^{\tr}G_{\mathrm r}E_{\hh,9}a
          -(E_{\hh,9}b)^{\tr}G_{\mathrm r}E_{\hh,9}b,\\
 \gamma&=2\bigl((E_{\hh,9}a)^{\tr}G_{\mathrm r}F_{\hh,9}a
          -(E_{\hh,9}b)^{\tr}G_{\mathrm r}F_{\hh,9}b\bigr),\\
 \eta&=(F_{\hh,9}a)^{\tr}G_{\mathrm r}F_{\hh,9}a
          -(F_{\hh,9}b)^{\tr}G_{\mathrm r}F_{\hh,9}b.
\end{aligned}
\]
For the $81$ comparisons, direct substitution gives
\begin{equation}\label{eq:regular-tail-certificate}
\begin{aligned}
 \min\alpha&=\frac2{245},\\
 \min(\alpha+\gamma+\eta)&=\frac{31}{3920},\\
 \min\left(\alpha-\frac{|\gamma|}{2}
                    -\frac{|\eta|}{4}\right)&=\frac{59}{8820}.
\end{aligned}
\end{equation}
Since $z=1$ for $n=0$ and $|z|\leq1/2$ for $n\geq1$, every
reference gap is at least $\delta:=59/8820$.

It remains to control the perturbation through the prefix.
The Gram perturbation corresponding to
$\varepsilon(\beta+h)$ has each entry bounded in absolute value
by $45\varepsilon/4$, hence Euclidean operator norm at most
$45\varepsilon$. Since $\Gamma\succeq I/2$, the perturbed
initial Gram matrix $G_{\mathrm{in}}$ satisfies
\[
 -90\varepsilon\Gamma
 \preceq G_{\mathrm{in}}-\Gamma
 \preceq90\varepsilon\Gamma.
\]
In particular, it is positive definite for the stated range of
$\varepsilon$. Congruence by $P_V$, followed by the normalization
in~\eqref{eq:regular-terminal-gram}, preserves these inequalities.
If $G'$ denotes the resulting perturbed terminal Gram matrix,
then
\[
 -90\varepsilon G_{\mathrm r}
 \preceq G'-G_{\mathrm r}
 \preceq90\varepsilon G_{\mathrm r}.
\]
All diagonal entries of $G_{\mathrm r}$ are at most one. Factoring
through $G_{\mathrm r}^{1/2}$ therefore gives
\[
 \|G'-G_{\mathrm r}\|_{\max}\leq90\varepsilon.
\]
The uniform comparison estimate~\eqref{eq:four-uniform-comparison-error}
now bounds the change of every gap along the infinite tail by
$392\cdot90\varepsilon=35280\varepsilon$. Thus every gap is
at least
\[
 \frac{59}{8820}-35280\varepsilon>0
 \qquad(0<\varepsilon\leq10^{-7}).
\]
This proves infinite strict admissibility. Equation
\eqref{eq:open-four-normalized-powers} shows that the normalized
return matrices converge to $E_{\hh,9}^{\tr}G'E_{\hh,9}$, of rank three.
The degeneration argument in
Theorem~\ref{thm:open-four-dimensional}, including its treatment
of intermediate phases, applies unchanged.

Finally, for each fixed $\varepsilon>0$, the allowed $h$ range
over a full-dimensional open cube. Their union is open, proving
the last assertion.
\end{proof}

\begin{proof}[Proof of Corollary~\ref{cor:higher-dimensional-fsc}]
Let $X_0,\ldots,X_d$ be independent standard Gaussian vectors in
$\R^d$, and let $\Sigma_d=[X_0,\ldots,X_d]$. These vertices are
affinely independent almost surely. For five specified vertices,
put
\[
 D^{(d)}_{ij}:=\frac{|X_i-X_j|^2}{2d},
 \qquad 0\leq i<j\leq4.
\]
Then
\[
 \sqrt d\,(D^{(d)}-\mathbf1)
 =\frac1{\sqrt d}\sum_{\alpha=1}^dY_\alpha,
 \qquad
 (Y_\alpha)_{ij}
 =\frac12(X_{i,\alpha}-X_{j,\alpha})^2-1.
\]
The ten-dimensional vectors $Y_\alpha$ are independent,
identically distributed, and centered. Expanding their second
moments gives the covariance matrix $\Xi$ with entries
\[
 \Xi_{ef}=
 \begin{cases}
 2,&e=f,\\
 1/2,&e\ne f\text{ share a vertex},\\
 0,&e,f\text{ are disjoint}.
 \end{cases}
\]
If $C$ is the unsigned vertex-edge incidence matrix of $K_5$,
then
\[
 \Xi=I_{10}+\frac12C^{\tr}C>0.
\]
The multivariate central limit theorem
\cite[Theorem~3.10.7]{Durrett2019} therefore gives
\[
 \sqrt d\,(D^{(d)}-\mathbf1)
 \ \Longrightarrow\ Z,\qquad Z\sim N(0,\Xi).
\]
The cube
\[
 \Omega:=\left\{z\in\R^{10} \;\Big|\; \|z-\beta\|_\infty<\frac12 \right\}
\]
has positive $Z$-probability and its boundary has probability
zero. Hence, for some $p>0$ and all sufficiently large $d$,
\[
 \Pr\bigl(\sqrt d\,(D^{(d)}-\mathbf1)\in\Omega\bigr)
 \geq p.
\]
For such $d$ with $d^{-1/2}\leq10^{-7}$, this event implies
$D^{(d)}\in\mathcal K$. By
Lemma~\ref{lem:regular-four-cone}, the corresponding $4$-face
belongs to an open family having degenerating branches.

Partition the $d+1$ vertices into
$m=\lfloor(d+1)/5\rfloor$ disjoint groups of five, ignoring any
remaining vertices. The corresponding events are independent.
The probability that none occurs is at most $(1-p)^m$. If one
occurs, Lemma~\ref{lem:lift-face-branch} gives a degenerating
branch of $\Sigma_d$. Since the successful face condition is
open, $\Sigma_d$ lies in the interior of the set of simplices
having such a branch. We obtain, for an absolute $c_0>0$ and all
sufficiently large $d$,
\[
 \Pr\bigl(\Sigma_d\text{ lies in that interior}\bigr)
 \geq1-(1-p)^{\lfloor(d+1)/5\rfloor}
 \geq1-e^{-c_0d}.
\]
For each fixed $d\geq4$, Corollary~\ref{cor:open-all-dimensions}
provides a nonempty open set of successful vertex configurations.
The joint Gaussian density is positive everywhere, so this set
has positive probability. Reducing $c_0$ over the finitely many
remaining dimensions gives a single $c>0$ for which the stated
bound holds for every $d\geq4$.

Finally, each exceptional hyperplane from
Lemma~\ref{lem:full-tree-tie-free} becomes a nonzero polynomial
equation in the initial vertex coordinates. It is nonzero because
every positive-definite Gram matrix is realized by some simplex.
Such a polynomial zero set has Lebesgue measure zero. Absolute
continuity of the Gaussian law and countability of the equations
therefore show that the entire refinement tree has pairwise
distinct edge lengths almost surely.
\end{proof}

\section{The elliptic construction}\label{sec:elliptic}

In this section we prove Theorem~\ref{thm:main-elliptic} by
constructing a compact one-dimensional family of tetrahedra,
parametrized by an ellipse, which is preserved projectively by a
particular marked word. The
induced action on the parameter ellipse is conjugate to an irrational
rotation. We show that the whole family is nondegenerate, that the
word is LEB-admissible at every point of the family, and that every
resulting branch contains infinitely many similarity classes. 

Unlike in the hyperbolic construction of Section~\ref{sec:hyperbolic},
in the first construction, the opposite-edge tie-breaking rule does not apply in this construction.  Whenever several edges
are tied for longest, any one of them may be selected. The refined construction, however, does not need tie-breaking. 

\subsection{Method}
\label{subsec:elliptic-method}
The elliptic mechanism below is formulated in the squared-edge coordinates model because both the invariant affine family and the LEB-admissibility conditions have
particularly simple expressions in these coordinates. We pass to the
Gram-matrix model only when verifying nondegeneracy.

Let $\omega$ be a marked word. Suppose that there is a vector
$\mathbf{d}_*\in\mathcal C$ (see Definition~\ref{def:tet_cone_squared-edge}), a $6\times2$ matrix $V=
    \begin{bmatrix}
        v_1&v_2
    \end{bmatrix}$,
a scalar $\lambda>0$, and a matrix $U\in\GL(2,\R)$ such that
\begin{equation}\label{eq:elliptic_mechanism}
    M_\omega \mathbf{d}_*=\lambda \mathbf{d}_*,
    \qquad
    M_\omega V=\lambda VU.
\end{equation}
Then the affine plane
\[
    D(u):=\mathbf{d}_*+Vu,
    \qquad
    u\in\R^2,
\]
satisfies
\begin{equation}\label{eq:general_elliptic_return}
    M_\omega D(u)=\lambda D(Uu).
\end{equation}
After projectivization, the common factor $\lambda$ disappears, so
the marked word acts on the parameter plane $D$ by $U$. 
The elliptic mechanism occurs when $U$ preserves a positive-definite
quadratic form. That is, if $U^{\tr}SU=S$ for some $S>0$, then every level set
\[
    \left\{u\in\R^2\mid u^{\tr}Su=c\right\},
    \qquad c>0,
\]
is a compact $U$-invariant ellipse. If, in addition, the eigenvalues
of $U$ lie on the unit circle and are not roots of unity, then the
action on each such ellipse is conjugate to an irrational rotation,
and all its orbits are infinite.

To obtain an actual LEB branch, it remains to verify that one of these
invariant ellipses lies in the tetrahedral cone and in the
admissibility domain of the word. Compactness then keeps the marked
orbit, as well as the finitely many intermediate stages of each
block, uniformly inside the nondegenerate shape space.

\subsection{An elliptic family with tie-breaking}
\label{subsec:elliptic-main}
To find a marked word $\omega$ that satisfies the conditions described at the beginning of the previous subsection, we ran a brute-force search that produced the following.

Let
\begin{equation*}\label{eq:omega_e}
    W_{\ee,3}
    :=
    (0\leftarrow1)(1\leftarrow2)(3\leftarrow1),
    \qquad
    \sigma_{\ee,3}
    :=
    (1\,2\,3),
    \qquad
    \omega_{\ee,3}
    :=
    (W_{\ee,3},\sigma_{\ee,3}),   
\end{equation*}

where $\sigma_{\ee,3}$ fixes $0$.

\begin{lemma}\label{lem:elliptic-construction}
    For the marked word $\omega_{\ee,3}$ the following holds:
    \begin{enumerate}[(a)]
        \item 
        There exist
    \[
    \mathbf{d}_*\in\mathcal{C}, \quad \text{ a } 6\times 2 \text{ matrix }    V=\begin{bmatrix}
        v_1&v_2
    \end{bmatrix}, \quad 
    \lambda>0, \quad 
    \text{ and } \quad 
    U\in\GL(2,\R) 
    \]
    satisfying \eqref{eq:elliptic_mechanism} and \eqref{eq:general_elliptic_return}, with $\omega=\omega_{\ee,3}$. 
        \item 
        The matrix $U$ preserves a quadratic form that defines an ellipse $\mathscr E_{\ee,3}$, on which $U$ acts as an irrational rotation.
        \item
         For every $(x,y)\in\mathscr E_{\ee,3}$ we have $D(x,y):=\mathbf{d}_*+xv_1+yv_2\in\mathcal C$. 
    \end{enumerate}
    Consequently, $D(\mathscr E_{\ee,3})$ is a projectively
$\omega_{\ee,3}$-invariant compact family of nondegenerate
tetrahedra, and every $U$-orbit in $\mathscr E_{\ee,3}$ is
infinite.
\end{lemma}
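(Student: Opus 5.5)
The approach is to compute everything explicitly for $\omega_{\ee,3}$, in the same spirit as Lemma~\ref{lem:constructing_G_t}. The natural order is: build the $6\times6$ matrix, read off its spectrum, extract the invariant data, and then check the cone condition.

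First, I would assemble $M_{\omega_{\ee,3}}=\Pi_{\sigma_{\ee,3}}L_{3\leftarrow1}L_{1\leftarrow2}L_{0\leftarrow1}$ from \eqref{eq:explicit_L} and \eqref{eq:explicit_Pi}. Equivalently, I could compute $P_{\omega_{\ee,3}}=B_{0\leftarrow1}B_{1\leftarrow2}B_{3\leftarrow1}R_{\sigma_{\ee,3}}$ and use the compatibility \eqref{eq:marked-model-compatibility}.

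The spectrum of $M_\omega$ is determined by that of $P:=P_{\omega_{\ee,3}}$. Since $\mathscr C_P$ acts on $\Sym_3\cong\mathrm{Sym}^2(\R^3)^*$, its eigenvalues are the products $\gamma\delta$ of pairs of eigenvalues of $P$. So I would compute $\chi_P$, which has $\det P=\pm\frac18$. I expect to find one real eigenvalue $\alpha$ and a complex pair $\beta,\overline\beta$ with $|\beta|^2=\alpha^2$, or more generally some coincidence that produces a real eigenvalue $\lambda$ of $\mathscr C_P$ together with a complex pair of modulus $\lambda$.

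The data in part (a) then come from the left-eigenvector construction of \S\ref{subsec:hyperbolic-method}:
\begin{itemize}
\item take $\mathbf{d}_*$ to be $\mathcal I$ of the $\lambda$-eigenform, for instance $q_\beta\overline{q_\beta}^{\tr}+\overline{q_\beta}q_\beta^{\tr}$ plus a multiple of $q_\alpha q_\alpha^{\tr}$ if $\lambda$ has a two-dimensional eigenspace, chosen to be positive definite;
\item take $v_1,v_2$ to be $\mathcal I$ of the real and imaginary parts of the eigenform with eigenvalue $\alpha\beta$ (namely $q_\alpha q_\beta^{\tr}+q_\beta q_\alpha^{\tr}$), or of $q_\beta q_\beta^{\tr}$ when the relevant ratio is $\beta/\overline\beta$.
\end{itemize}
Then $M_\omega V=\lambda VU$, where $U$ is the rotation-type matrix $\begin{pmatrix}a&b\\-b&a\end{pmatrix}/\lambda$ with $\det U=1$. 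This gives \eqref{eq:elliptic_mechanism}, and \eqref{eq:general_elliptic_return} follows by linearity.

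For part (b), the form $S$ making the real and imaginary parts orthonormal is $U$-invariant, as in the hyperbolic method section. The eigenvalues of $U$ are $e^{\pm i\theta}$, where $e^{i\theta}$ is an algebraic number of the form $\beta/|\beta|$ or $\beta/\overline\beta$. To show it is not a root of unity, I would compute its minimal polynomial over $\Q$ and observe that it is not cyclotomic; the cleanest test is that it is not monic with integer coefficients, since there will be denominators like those in $\frac{-1\pm i\sqrt7}{4}$, and a root of unity is an algebraic integer. Hence $U$ is conjugate to an irrational rotation on each level set $u^{\tr}Su=c$.

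For part (c), I would pick a level $c$ small enough that the ellipse lies inside $\mathcal C$. This works because $\mathbf{d}_*\in\mathcal C$ and $\mathcal C$ is open; explicitly, I would check via \eqref{eq:E_inverse} that the leading principal minors of $\mathcal I^{-1}(D(x,y))$ stay positive on the ellipse. The concluding sentence of the lemma then follows from compactness of the ellipse together with \eqref{eq:general_elliptic_return}.

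The main obstacle is the level choice in (c). The ellipse must eventually also sit inside the admissibility domain, and that domain is cut out by ties, which presumably pin $\mathbf{d}_*$ to its boundary. So I would choose $c$ concretely, not merely as small as possible. I would parametrize the ellipse as $(x,y)=\sqrt c\,S^{-1/2}(\cos\phi,\sin\phi)$. Each minor is then a trigonometric polynomial of low degree in $\phi$, and I would bound it below by its constant term minus the sum of the absolute values of its other coefficients.
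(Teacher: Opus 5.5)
Your proposal is correct. It reaches the same objects as the paper by a more structural route. The paper never passes to the $3\times3$ return. It multiplies out the $6\times6$ matrix $M_{\ee,3}$, factors $\chi_{M_{\ee,3}}$, and reads off the two-dimensional $\frac14$-eigenspace and the plane $F=\ker(16M_{\ee,3}^2+6M_{\ee,3}+I)$. It then picks the integral basis $v_1=(4,1,0,0,0,0)^{\tr}$, $v_2=(4,0,1,0,0,0)^{\tr}$, solves $U^{\tr}SU=S$ directly, and fixes the level $\frac1{16}$, so that (c) becomes a short minor computation.

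Your expectation about $P$ is borne out. One finds $\chi_{P_{\omega_{\ee,3}}}(z)=(z-\frac12)(z^2+\frac34z+\frac14)$, so $\alpha=\frac12=|\beta|$ with $\beta=\frac{-3+i\sqrt7}{8}$. This single coincidence accounts for the double eigenvalue $\frac14$ of $M_{\ee,3}$ and for the modulus $\frac14$ of both remaining pairs, $\alpha\beta,\alpha\overline\beta$ and $\beta^2,\overline\beta^{\,2}$. Take $q_\alpha=(1,1,1)^{\tr}$ and $L=\spn\{(-1,4,4)^{\tr}\}$. Then the paper's $\mathbf d_*$ is a positive combination of $\mathcal I(G_\parallel)$ and $\mathcal I(q_\alpha q_\alpha^{\tr})$. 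Its plane $F$ is exactly $\{\mathcal I(q_\alpha w^{\tr}+wq_\alpha^{\tr})\mid w\perp L\}$, i.e.\ your $\alpha\beta$-plane; for instance $v_1$ comes from $w=(2,\frac12,0)^{\tr}$.

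Your derivation explains why an elliptic plane exists at all. It also makes (b) immediate, since $U$ is literally a rotation in the real/imaginary basis, and the openness argument suffices for (c). The paper's derivation instead yields rational data and an explicit level, which are reused verbatim in Lemma~\ref{lem:elliptic-admissibility}.

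You should not leave the choice of plane open. The $\beta^2$-plane also satisfies (a)--(c): there $U$ has eigenvalues $\frac{1\pm i\sqrt{63}}{8}$, again not roots of unity. It is useless downstream, however. On that plane $d_{12}-d_{13}$ is a nonzero linear function; for $G=q_\beta q_\beta^{\tr}$ it equals $(q_2-q_3)(q_2+q_3-2q_1)\neq0$. So every ellipse centred at $\mathbf d_*$ breaks the tie $d_{12}=d_{13}$ in both directions, and the second letter $1\leftarrow2$ fails to be admissible on part of it.

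On the $\alpha\beta$-plane, by contrast, the forms $q_\alpha w^{\tr}+wq_\alpha^{\tr}$ change only $d_{01},d_{02},d_{03}$. Hence the ties at the second and third bisections hold identically on the whole plane, and all other comparisons are strict at $\mathbf d_*$. The obstacle you anticipate is therefore misplaced. Once the right plane is chosen, every sufficiently small level works for admissibility as well; the genuine constraint is the plane, not the level.
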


\begin{proof}
We derive the objects in the order in which they arise from the
marked-word matrix. By Definition~\ref{def:marked-word-matrix}, the
word $\omega_{\ee,3}$ determines
\[
    M_{\ee,3}
    :=
    M_{\omega_{\ee,3}}
    =
    \Pi_{\sigma_{\ee,3}}
    L_{3\leftarrow1}L_{1\leftarrow2}L_{0\leftarrow1}.
\]
Using \eqref{eq:explicit_Pi} and \eqref{eq:explicit_L}, a direct calculation gives
\[
    M_{\ee,3}
    =
    \begin{pmatrix}
        -\frac14&\frac12&0&\frac12&0&0\\
        -\frac18&\frac18&\frac14&\frac1{16}&\frac18&-\frac18\\
        0&\frac14&0&0&0&0\\
        0&0&0&\frac3{16}&-\frac18&\frac38\\
        0&0&0&\frac14&0&0\\
        0&0&0&-\frac1{16}&\frac18&\frac18
    \end{pmatrix},
\]
whose characteristic polynomial is
\[
    \chi_{M_{\ee,3}}(z)
    =
    \frac1{4096}
    (4z-1)^2
    (16z^2+6z+1)
    (16z^2-z+1).
\]
We first choose the center $\mathbf{d}_*$ of the affine family $D(x,y)$. The eigenspace of
$M_{\ee,3}$ corresponding to $\lambda:=\frac14$ is
\[
    \ker\left(M_{\ee,3}-\frac14I\right)
    =
    \operatorname{span}
    \left\{
        \begin{pmatrix}1\\1\\1\\0\\0\\0\end{pmatrix},
        \begin{pmatrix}2\\0\\0\\2\\2\\1\end{pmatrix}
    \right\}.
\]
Inside this eigenspace, we take the vector $\mathbf{d}_*$ below, which is a convenient integral point in $\ker(M_{\ee,3}-\frac14I)\cap\mathcal C$, around which a small invariant ellipse fits:
\[
    \mathbf{d}_*
    :=
    3
    \begin{pmatrix}1\\1\\1\\0\\0\\0\end{pmatrix}
    +
    10
    \begin{pmatrix}2\\0\\0\\2\\2\\1\end{pmatrix}
    =
    \begin{pmatrix}23\\3\\3\\20\\20\\10\end{pmatrix}, \quad 
    \text{ thus }
    \quad
    M_{\ee,3} \mathbf{d}_*=\frac14\mathbf{d}_*.
\]
Next, construct the directions $v_1,v_2$ of the affine family $D(x,y)$. The roots of
the factor $16z^2+6z+1$ are
\[
    \mu,\overline\mu
    =
    \frac{-3\pm i\sqrt7}{16},
\]
and both have modulus $\frac14$. The corresponding real invariant plane
is
\[
    F
    :=
    \ker\left(16M_{\ee,3}^2+6M_{\ee,3}+I\right).
\]
A direct computation gives $F=\operatorname{span}\{v_1,v_2\}$, where
\[
    v_1
    :=
    \begin{pmatrix}4\\1\\0\\0\\0\\0\end{pmatrix},
    \qquad
    v_2
    :=
    \begin{pmatrix}4\\0\\1\\0\\0\\0\end{pmatrix}.
\]
Thus, setting $V:=\begin{bmatrix}v_1&v_2\end{bmatrix}$, the affine plane $D(x,y)$ can be written as
\begin{equation}\label{eq:D_e}
    D(x,y)
    := \mathbf{d}_*+
    V\begin{pmatrix}x\\y\end{pmatrix} =
    \begin{pmatrix}
        23+4x+4y\\
        3+x\\
        3+y\\
        20\\
        20\\
        10
    \end{pmatrix}.    
\end{equation}
The matrix $U$ is now obtained from the action of $M_{\ee,3}$ on
$F$. Indeed,
\[
    M_{\ee,3} v_1
    =
    -\frac38v_1+\frac14v_2,
    \qquad
    M_{\ee,3} v_2
    =
    -\frac14v_1.
\]
Therefore, the matrix of the normalized restriction
$4M_{\ee,3} |_F$ in the basis $(v_1,v_2)$ is
\[
    U
    =
    \begin{pmatrix}
        -\frac32&-1\\
        1&0
    \end{pmatrix}.
\]
Equivalently, $M_{\ee,3} V=\frac14VU$, which together with the previous $M_{\ee,3} \mathbf{d}_*=\frac14\mathbf{d}_*$ shows \eqref{eq:elliptic_mechanism}.
In particular, we obtain that 
\begin{equation}\label{eq:elliptic-return}
    M_{\ee,3} D(x,y)
    =
    M_{\ee,3} \mathbf{d}_*
    +
    M_{\ee,3} V
    \begin{pmatrix}x\\y\end{pmatrix}   =
    \frac14\mathbf{d}_*
    +
    \frac14VU
    \begin{pmatrix}x\\y\end{pmatrix}   =
    \frac14
    D\left(        U\begin{pmatrix}x\\y\end{pmatrix}
    \right),
\end{equation}
which proves item $(a)$.

Next, we construct an invariant ellipse from the quadratic form preserved by $U$. Write
$S=
    \begin{pmatrix}
        a&b\\
        b&c
    \end{pmatrix}$ and solve
$U^{\tr}SU=S$. This gives
$b=\frac34a$ and $c=a$. Choosing $a=4$, we obtain the positive-definite matrix $S
    :=
    \begin{pmatrix}
        4&3\\
        3&4
    \end{pmatrix}$.
Hence, the quadratic form
\[
    Q(x,y)
    :=
    \begin{pmatrix}x&y\end{pmatrix}
    S
    \begin{pmatrix}x\\y\end{pmatrix}
    =
    4x^2+6xy+4y^2,
\]
and thus every positive level set of $Q$, is $U$-invariant. The level is chosen only to ensure that the ellipse remains inside the tetrahedral cone and the admissibility region of the word.
Choosing the level $Q=\frac1{16}$ gives precisely the ellipse
\begin{equation}\label{eq:E_e}
    \mathscr E_{\ee,3}
    :=
    \left\{
        (x,y)\in\R^2 \;\middle|\;
        4x^2+6xy+4y^2=\frac1{16}
    \right\}.
\end{equation}

To identify the action on this ellipse, write $C^{\tr}C=S$, so
$C:=
    \begin{pmatrix}
        2&\frac32\\
        0&\frac{\sqrt7}{2}
    \end{pmatrix}$. 
     The change of coordinates $w=Cu$ maps
$\mathscr E_{\ee,3}$ to the Euclidean circle of radius $\frac14$, where $CUC^{-1}
    =
    \begin{pmatrix}
        -\frac34&-\frac{\sqrt7}{4}\\
        \frac{\sqrt7}{4}&-\frac34
    \end{pmatrix}$ acts via rotation by an angle $\theta$ satisfying $e^{i\theta}
    =
    \frac{-3+i\sqrt7}{4}$.
    
We claim that this number is not a root of unity. Indeed, since its modulus is one,
its inverse is its complex conjugate, and hence $e^{i\theta}+e^{-i\theta}=-\frac32$. If $e^{i\theta}$ were a root of unity, then it and its inverse would
be algebraic integers, and so would their sum. This is impossible
because every rational algebraic integer is an integer. Therefore $\frac{\theta}{2\pi}\notin\Q$, and the action of $U$ on $\mathscr E_{\ee,3}$ is conjugate to an
irrational rotation. In particular, every orbit is dense in
$\mathscr E_{\ee,3}$, and hence infinite, proving item $(b)$.

It remains to verify that the points of the ellipse represent
nondegenerate tetrahedra. For that we will use the Gram-matrix model. The eigenvalues of $S$ are $1$ and $7$.
Therefore, by \eqref{eq:E_e}, for $(x,y)\in\mathscr E_{\ee,3}$ we have
\[
    x^2+y^2
    \leq
    Q(x,y)
    =
    \frac1{16},
\]
and hence
\begin{equation}\label{eq:elliptic-coordinate-bound}
    |x|,|y|\leq\frac14.
\end{equation}
By Proposition~\ref{prop:gram-edge-equivalence} and by applying \eqref{eq:E_inverse}, the Gram matrix
corresponding to $D(x,y)$ is
\begin{equation}\label{eq:G_e}
    G_{\ee,3}(x,y)
    =
    \begin{pmatrix}
        23+4x+4y
        &
        3+\frac52x+2y
        &
        3+2x+\frac52y
        \\[1mm]
        3+\frac52x+2y
        &
        3+x
        &
        -2+\frac{x+y}{2}
        \\[1mm]
        3+2x+\frac52y
        &
        -2+\frac{x+y}{2}
        &
        3+y
    \end{pmatrix}.
\end{equation}
Its leading principal minors are
\[
    m_1=23+4x+4y,
\]
\[
    m_2
    =
    \frac{240+80x-9x^2-24xy-16y^2}{4},
\]
and
\[
    m_3
    =
    25-\frac{35}{4}Q(x,y).
\]
Using \eqref{eq:elliptic-coordinate-bound}, we obtain
\[
    m_1\geq21,
    \qquad
    4m_2
    =
    240+80x-(3x+4y)^2
    \geq
    220-\frac{49}{16}>0, \qquad
    m_3
    =
    25-\frac{35}{64}>0.
\]
Sylvester's criterion therefore gives
$G_{\ee,3}(x,y)>0$. By Definition~\ref{def:tet_cone_squared-edge}, this is equivalent to
$D(x,y)\in\mathcal C$, which proves item $(c)$.

Finally, $\mathscr E_{\ee,3}$ is compact and $D$ is continuous,
so $D(\mathscr E_{\ee,3})$ is a compact subset of $\mathcal C$.
Its image in
$\PP\mathcal C\cong\mathcal T_{\mathrm{lab}}$ is therefore compact. The return formula
\eqref{eq:elliptic-return}, together with
$U(\mathscr E_{\ee,3})=\mathscr E_{\ee,3}$, shows that this
family is projectively $\omega_{\ee,3}$-invariant.
\end{proof}

For $(x,y)\in\mathscr E_{\ee,3}$, let
$\Delta_{x,y}$ denote a labeled tetrahedron whose squared-edge vector
is $D(x,y)$.

\begin{lemma}\label{lem:elliptic-admissibility}
For every $(x,y)\in\mathscr E_{\ee,3}$, the bisections induced by
\[
    W_{\ee,3}
    =
    (0\leftarrow1)(1\leftarrow2)(3\leftarrow1)
\]
on $\Delta_{x,y}$ are LEB-admissible.
\end{lemma}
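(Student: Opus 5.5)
The plan is to verify admissibility by tracking the three successive squared-edge vectors $\mathbf{d}^{(0)},\mathbf{d}^{(1)},\mathbf{d}^{(2)}$ explicitly as affine functions of $(x,y)$. At each step we compare the prescribed edge against the other five, using only the coordinate bound $|x|,|y|\le\frac14$ from \eqref{eq:elliptic-coordinate-bound}. As announced at the start of this section, ties are allowed here and no tie-breaking rule is needed: we only need the prescribed edge to attain the maximum, as in \eqref{eq:word-admissibility}.

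\textbf{Step 1.} By \eqref{eq:D_e}, $\mathbf{d}^{(0)}=D(x,y)=(23+4x+4y,\,3+x,\,3+y,\,20,\,20,\,10)^{\tr}$. Since $23+4x+4y\ge 21>20$, the edge $01$ is the unique longest edge, so $0\leftarrow1$ is admissible.

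\textbf{Step 2.} Apply the midpoint rules \eqref{eq:explicit_L}, as in the proof of Lemma~\ref{lem:admissibility}. This gives
\[
\mathbf{d}^{(1)}=\Bigl(\tfrac{23}{4}+x+y,\ \tfrac{23}{4}-\tfrac x2-y,\ \tfrac{23}{4}-x-\tfrac y2,\ 20,\ 20,\ 10\Bigr)^{\tr}.
\]
The first three entries are at most $\frac{23}{4}+\frac12<20$. Hence the longest edges are exactly $12$ and $13$, tied at $20$, and choosing $12$ makes $1\leftarrow2$ admissible.

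\textbf{Step 3.} Apply \eqref{eq:explicit_L} once more, now to the edges $01,12,13$ incident to $1$. This gives
\[
\mathbf{d}^{(2)}=\Bigl(\tfrac34+\tfrac x4,\ \tfrac{23}{4}-\tfrac x2-y,\ \tfrac{23}{4}-x-\tfrac y2,\ 5,\ 10,\ 10\Bigr)^{\tr}.
\]
Here $d^{(2)}_{13}=\frac{20+10}{2}-\frac{20}{4}=10$, which ties with $d^{(2)}_{23}=10$. The remaining entries are bounded by $\frac{23}{4}+\frac38<10$. So $13$ is a longest edge and $3\leftarrow1$ is admissible.

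The main obstacle is simply getting the midpoint arithmetic right. The only structural point is that the ties in Steps 2 and 3 are exact and independent of $(x,y)$: the entries $20,20,10$ come from edges untouched by the deformation $V$. This is consistent with the remark that this first elliptic family genuinely requires tie resolution. The strict inequalities against the $(x,y)$-dependent entries hold with wide margins, so the crude bound $|x|,|y|\le\frac14$ suffices everywhere.
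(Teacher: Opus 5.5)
Your proposal is correct and follows essentially the same route as the paper: you compute $L_{0\leftarrow1}D(x,y)$ and then $L_{1\leftarrow2}L_{0\leftarrow1}D(x,y)$ explicitly, getting the same vectors as the paper. You then use $|x|,|y|\le\frac14$ to show that $01$ is the unique longest edge, then $12,13$ are tied at $20$, then $13,23$ are tied at $10$; your explicit margins ($23+4x+4y\ge21>20$, $\frac{23}{4}+\frac12<20$, $\frac{23}{4}+\frac38<10$) just spell out the inequalities the paper leaves implicit.
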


\begin{proof}
A direct computation gives that 
\[
    L_{0\leftarrow1}
    =
    \begin{pmatrix}
        \frac14&0&0&0&0&0\\
        -\frac14&\frac12&0&\frac12&0&0\\
        -\frac14&0&\frac12&0&\frac12&0\\
        0&0&0&1&0&0\\
        0&0&0&0&1&0\\
        0&0&0&0&0&1
    \end{pmatrix}, 
    \text{ and }
    L_{1\leftarrow2}
    =
    \begin{pmatrix}
        \frac12&\frac12&0&-\frac14&0&0\\
        0&1&0&0&0&0\\
        0&0&1&0&0&0\\
        0&0&0&\frac14&0&0\\
        0&0&0&-\frac14&\frac12&\frac12\\
        0&0&0&0&0&1
    \end{pmatrix}.
\]
By \eqref{eq:D_e}, denoting $D^{(1)}(x,y)
    =
    L_{0\leftarrow1}D(x,y)$ and $D^{(2)}(x,y)
    =
    L_{1\leftarrow2}D^{(1)}(x,y)$ yields 
\[
    D^{(1)}(x,y)
    =
    \begin{pmatrix}
        x+y+\frac{23}{4}\\
        -\frac x2-y+\frac{23}{4}\\
        -x-\frac y2+\frac{23}{4}\\
        20\\
        20\\
        10
    \end{pmatrix},
\text{ and }
    D^{(2)}(x,y)
    =
    \begin{pmatrix}
        \frac x4+\frac34\\
        -\frac x2-y+\frac{23}{4}\\
        -x-\frac y2+\frac{23}{4}\\
        5\\
        10\\
        10
    \end{pmatrix}.
\]
By \eqref{eq:elliptic-coordinate-bound}, the edge $01$ is the unique
longest edge in $D(x,y)$; the edges $12$ and $13$ are tied for
longest in $D^{(1)}(x,y)$; and the edges $13$ and $23$ are tied for
longest in $D^{(2)}(x,y)$. Thus the selected edges
\[
    01,
    \qquad
    12,
    \qquad
    13
\]
are longest at the three successive stages, proving admissibility.
\end{proof}

\begin{remark}\label{rem:elliptic-ties}
At the second bisection of each elliptic block, the edges $12$ and
$13$ are tied for longest. Their opposite edges are $03$ and $02$,
respectively, and
\[
    d_{03}^{(1)}-d_{02}^{(1)}=\frac{y-x}{2}.
\]
The prescribed word $W_{\ee,3}$ always chooses the edge $12$ at this stage, which agrees with the opposite-edge rule only if $y\ge x$. Since the dynamics is conjugate to an irrational rotation,
every orbit visits both regions $y>x$ and $x>y$ infinitely often. Consequently, $W_{\ee,3}$ does not define a branch governed by the
opposite-edge tie-breaking rule. 

We note that at the third bisection, $13$ and $23$ are tied for longest,
and
\[
    d_{02}^{(2)}-d_{01}^{(2)}
    =
    5-\frac34x-y>0.
\]
Thus the opposite-edge rule always agrees with the prescribed edge $13$
at this stage.
\end{remark}

\begin{lemma}\label{lem:elliptic-non-similar}
For $(x,y),(x',y')\in\mathscr E_{\ee,3}$, the tetrahedra
$\Delta_{x,y}$ and $\Delta_{x',y'}$ are similar as unlabeled
tetrahedra if and only if
\[
    (x',y')=(x,y)
    \qquad\text{or}\qquad
    (x',y')=(y,x).
\]
Consequently, the family
\[
    \left\{
        \Delta_{x,y}\mid
        (x,y)\in\mathscr E_{\ee,3},\ x>y
    \right\}
\]
is uncountable and consists of pairwise non-similar tetrahedra.
Moreover, every $U$-orbit on $\mathscr E_{\ee,3}$ gives
infinitely many unlabeled similarity classes.
\end{lemma}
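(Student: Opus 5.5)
The plan is to reduce unlabeled similarity to an equality of squared-edge vectors up to a positive scalar and a vertex permutation, and then to read off the scalar from the sorted list of edge lengths. By Proposition~\ref{prop:gram-model} and Proposition~\ref{prop:gram-edge-equivalence}, two labeled tetrahedra are similar exactly when their squared-edge vectors are positive multiples of each other. Since relabeling acts by the permutation matrices $\Pi_\sigma$, the tetrahedra $\Delta_{x,y}$ and $\Delta_{x',y'}$ are similar as unlabeled tetrahedra if and only if
\[
\Pi_\sigma D(x,y)=c\,D(x',y')
\]
for some $\sigma\in S_4$ and some $c>0$. In particular, the multisets of the six entries must agree after scaling by $c$.

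For the \emph{only if} direction, I will use the bound \eqref{eq:elliptic-coordinate-bound}, namely $|x|,|y|\le\frac14$, and the explicit formula \eqref{eq:D_e}. These give the following entries of $D(x,y)$:
\begin{itemize}
\item $d_{01}=23+4x+4y\in[21,25]$;
\item two entries equal to $20$;
\item one entry equal to $10$;
\item $d_{02}=3+x$ and $d_{03}=3+y$, both in $[\frac{11}{4},\frac{13}{4}]$.
\end{itemize}
The same description holds for $(x',y')$. Scaling by $c>0$ preserves the order of the sorted entries, so the second-largest entry of each vector must correspond, which forces $20=20c$ and hence $c=1$. The multisets of entries of $D(x,y)$ and $D(x',y')$ then coincide. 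The entries $20,20,10$ are common to both, so the remaining entries must match as multisets:
\[
\{23+4x+4y,\;3+x,\;3+y\}=\{23+4x'+4y',\;3+x',\;3+y'\}.
\]
The first entry in each set is the unique one in $[21,25]$, so the two smallest entries agree as multisets, that is, $\{x,y\}=\{x',y'\}$.

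For the \emph{if} direction, I will take $\sigma=(2\,3)$. This relabeling fixes the edges $01$ and $23$, exchanges $02\leftrightarrow03$, and exchanges $12\leftrightarrow13$. By \eqref{eq:explicit_Pi} and \eqref{eq:D_e}, and since $d_{12}=d_{13}=20$, we get $\Pi_{(2\,3)}D(x,y)=D(y,x)$. Because a squared-edge vector in $\mathcal C$ determines the labeled tetrahedron up to similarity, $\Delta_{y,x}$ is similar to $\Delta_{x,y}$.

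For the consequences, the swap $(x,y)\mapsto(y,x)$ preserves $Q$ and therefore preserves $\mathscr E_{\ee,3}$. The arc $\{x>y\}$ of $\mathscr E_{\ee,3}$ is a nondegenerate arc of the ellipse, hence uncountable. By the classification just proved, distinct points of this arc give non-similar tetrahedra, since the only other point in a class is the mirror point, which lies in $\{x<y\}$.

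For the orbit statement, Lemma~\ref{lem:elliptic-construction}(b) shows that every $U$-orbit is infinite, since $U$ acts as an irrational rotation. Each unlabeled similarity class meets $\mathscr E_{\ee,3}$ in at most two points, so an infinite orbit meets infinitely many classes.

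The only delicate point is the scalar-fixing step, namely that $c=1$ is forced. This relies on the entry intervals listed above being disjoint and on the value $20$ being pinned as the second-largest entry, which I will verify directly from \eqref{eq:elliptic-coordinate-bound}.
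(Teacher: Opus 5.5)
Your proof is correct and follows essentially the same route as the paper. You pin the scale factor to $1$ using the value $20$ as the second-largest squared edge length, and you compare the two smallest entries to get $\{x,y\}=\{x',y'\}$. For the converse you use the transposition $(2\,3)$, and for the orbit statement you use the fact that each unlabeled similarity class contains at most two parameter points.
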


\begin{proof}
By \eqref{eq:elliptic-coordinate-bound}, the six squared-edge lengths
of $\Delta_{x,y}$ have the following order:
\[
    23+4x+4y>20=20>10>
    \max\{3+x,3+y\}.
\]
Suppose that $\Delta_{x,y}$ and $\Delta_{x',y'}$ are similar. Since a
similarity rescales all squared-edge lengths by the same positive
factor, and in both tetrahedra the second- and third-largest squared
edge lengths are equal to $20$, that factor must be $1$. Comparing
the two smallest squared-edge lengths then gives
\[
    \{x,y\}=\{x',y'\}.
\]
Thus $(x',y')=(x,y)$ or $(x',y')=(y,x)$. Conversely, the transposition
of the vertices $2$ and $3$ takes $D(x,y)$ to $D(y,x)$.

Finally, every
$U$-orbit is infinite by Lemma~\ref{lem:elliptic-construction}, while
an unlabeled similarity class contains at most the two parameter
points $(x,y)$ and $(y,x)$. Therefore every such orbit contains
infinitely many unlabeled similarity classes.
\end{proof}

\begin{theorem}\label{thm:elliptic-family}
For every $(x_0,y_0)\in\mathscr E_{\ee,3}$, repeated application
of the marked word $\omega_{\ee,3}$ produces
an infinite admissible LEB branch. The branch remains in a compact
subset of the nondegenerate tetrahedral shape space and contains
infinitely many pairwise non-similar tetrahedra.
\end{theorem}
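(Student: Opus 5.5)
The plan is to iterate the return formula \eqref{eq:elliptic-return} along the $U$-orbit, in the same way Theorem~\ref{thm:hyperbolic-family} iterates Lemma~\ref{lem:constructing_G_t}. Fix $(x_0,y_0)\in\mathscr E_{\ee,3}$ and put $(x_n,y_n)^{\tr}:=U^n(x_0,y_0)^{\tr}$. By Lemma~\ref{lem:elliptic-construction}(b) the ellipse is $U$-invariant, so $(x_n,y_n)\in\mathscr E_{\ee,3}$ for all $n$. By \eqref{eq:elliptic-return}, one application of $\omega_{\ee,3}$ sends a tetrahedron with squared-edge vector $D(x_n,y_n)$ to one with squared-edge vector $\frac14 D(x_{n+1},y_{n+1})$, which is therefore similar, as a labeled tetrahedron, to $\Delta_{x_{n+1},y_{n+1}}$. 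Since $M_\omega$ acts linearly on actual squared-edge vectors and the longest-edge conditions are invariant under positive scaling, admissibility of the $(n+1)$-st block follows from Lemma~\ref{lem:elliptic-admissibility} applied at $(x_n,y_n)$. Induction on $n$ then gives an infinite admissible LEB branch.

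For compactness, let $K_0:=D(\mathscr E_{\ee,3})$, which is compact in $\mathcal C$ by Lemma~\ref{lem:elliptic-construction}(c). The intermediate stages of each block are, up to scaling, $L_{0\leftarrow1}D(x,y)$ and $L_{1\leftarrow2}L_{0\leftarrow1}D(x,y)$ with $(x,y)\in\mathscr E_{\ee,3}$. Each bisection matrix $L_b$ maps $\mathcal C$ into $\mathcal C$, since it corresponds to the congruence $G\mapsto B_b^{\tr}GB_b$ with $B_b$ invertible, by \eqref{eq:bisection-model-compatibility}. So the images $K_1:=L_{0\leftarrow1}K_0$ and $K_2:=L_{1\leftarrow2}K_1$ are continuous images of a compact set and are therefore compact subsets of $\mathcal C$. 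Every tetrahedron in the branch is similar to a point of $K_0\cup K_1\cup K_2$, whose projectivization is compact in $\PP\mathcal C$. Passing to the finite quotient by $S_4$ preserves compactness. As a concrete byproduct, $\vartheta$ is continuous and positive on this compact set, so it is bounded below along the branch and the branch does not degenerate.

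Infinitely many similarity classes come directly from Lemma~\ref{lem:elliptic-non-similar}. The block-boundary tetrahedra are similar to $\Delta_{x_n,y_n}$, and the $U$-orbit of $(x_0,y_0)$ is infinite. Each unlabeled class contains at most two parameter points, $(x,y)$ and $(y,x)$, so the block boundaries already realize infinitely many pairwise non-similar tetrahedra.

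I expect no substantial obstacle, since the essential computations are already in the preceding lemmas. The one point that needs care is justifying that the intermediate stages stay in a compact subset of the nondegenerate shape space. This is handled by the invariance of $\mathcal C$ under each $L_b$ and by compactness of continuous images, rather than by new estimates. A second point is that the ties in Lemma~\ref{lem:elliptic-admissibility} are harmless: LEB permits any tied longest edge to be chosen, so the branch is a legitimate LEB branch.
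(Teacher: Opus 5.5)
Your proposal is correct and follows essentially the same route as the paper. You iterate the return formula along the $U$-orbit and invoke the admissibility lemma at each parameter point. You cover all phases by the compact set $D(\mathscr E_{\ee,3})\cup L_{0\leftarrow1}D(\mathscr E_{\ee,3})\cup L_{1\leftarrow2}L_{0\leftarrow1}D(\mathscr E_{\ee,3})\subset\mathcal C$, and you get infinitely many classes from the non-similarity lemma. Your extra justifications, namely the scale-invariance of the longest-edge conditions and $L_b(\mathcal C)\subseteq\mathcal C$ via the congruence $G\mapsto B_b^{\tr}GB_b$, are points the paper uses without spelling out.
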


\begin{proof}
Set
\[
    \binom{x_n}{y_n}
    :=
    U^n\binom{x_0}{y_0}.
\]
By Lemma~\ref{lem:elliptic-construction},
$(x_n,y_n)\in\mathscr E_{\ee,3}$ for every $n$, and iterating
\eqref{eq:elliptic-return} gives
\[
    M_{\omega_{\ee,3}}^nD(x_0,y_0)
    =
    4^{-n}D(x_n,y_n).
\]
Thus, after $n$ marked blocks, the resulting tetrahedron is similar
to $\Delta_{x_n,y_n}$. Lemma~\ref{lem:elliptic-admissibility} applies
at every parameter point $(x_n,y_n)$, so the block can be iterated
indefinitely and produces an admissible LEB branch. By
Lemma~\ref{lem:elliptic-non-similar}, its marked subsequence contains
infinitely many pairwise non-similar tetrahedra.

The complete branch, including the two intermediate tetrahedra in
each block, lies, after resetting labels at the block boundaries, in
the projectivization of the finite union
\[
    D(\mathscr E_{\ee,3})
    \cup
    L_{0\leftarrow1}D(\mathscr E_{\ee,3})
    \cup
    L_{1\leftarrow2}L_{0\leftarrow1}
    D(\mathscr E_{\ee,3}).
\]
Since $D(\mathscr E_{\ee,3})$ is compact and the maps
$L_{0\leftarrow1}$ and $L_{1\leftarrow2}$ are continuous, all three
sets in the displayed union are compact. Moreover,
$L_b(\mathcal C)=\mathcal C$ for every directed bisection $b$, so all
three sets are contained in $\mathcal C$. Their projectivizations
therefore form a compact subset of
$\mathcal T_{\mathrm{lab}}$, and hence also of the unlabeled shape
space $\mathcal T$.
\end{proof}

\subsection{An elliptic family without tie-breaking}

To finish our proof of Theorem~\ref{thm:main-elliptic}, we now perturb to a non-tie-breaking family. Unlike the
length-three word in Subsection~\ref{subsec:elliptic-main}, the word
below has strict longest-edge comparisons on an open invariant
neighborhood. An explicit algebraic perturbation will also remove
ties from the entire refinement tree.

Let $\omega_{\ee,9}=(W_{\ee,9},\sigma_{\ee,9})$, where
\begin{equation}\label{eq:strict-elliptic-word}
\begin{aligned}
 W_{\ee,9}:={}&
 (3\leftarrow0)(0\leftarrow2)(1\leftarrow2)
 (2\leftarrow3)(3\leftarrow1)\\
 & (1\leftarrow0)(3\leftarrow0)(0\leftarrow2)
 (2\leftarrow3),
 \qquad \sigma_{\ee,9}:=(0\,1\,2\,3).
\end{aligned}
\end{equation}
Thus the final relabeling is
$(p_0,p_1,p_2,p_3)\mapsto(p_1,p_2,p_3,p_0)$. Put
\begin{equation}\label{eq:strict-elliptic-reference}
 H_{\ee,9}:=
 \begin{pmatrix}38&31&38\\31&44&40\\38&40&56\end{pmatrix},
 \qquad
 G_{\ee,9}:=H_{\ee,9}+e_1e_1^{\tr}
 =\begin{pmatrix}39&31&38\\31&44&40\\38&40&56\end{pmatrix}.
\end{equation}
The leading principal minors of $H_{\ee,9}$ are $38,711,9720$,
so both matrices are positive definite. The squared-edge vector
of $G_{\ee,9}$ is
\[
 \mathbf{d}_{\ee,9}=(39,44,56,21,19,20)^{\tr}.
\]
The marked return and its normalization are
\begin{equation}\label{eq:strict-elliptic-return-matrix}
 P_{\ee,9}:=P_{\omega_{\ee,9}}
 =\frac1{16}
 \begin{pmatrix}-3&-2&-4\\-1&-2&0\\3&2&2\end{pmatrix},
 \qquad Q_{\ee,9}:=8P_{\ee,9}.
\end{equation}
Direct multiplication gives
\begin{equation}\label{eq:strict-elliptic-isometry}
 Q_{\ee,9}^{\tr}H_{\ee,9}Q_{\ee,9}=H_{\ee,9},
 \qquad \det Q_{\ee,9}=-1,
\end{equation}
and
\[
 \chi_{Q_{\ee,9}}(z)=(z+1)(z^2+\tfrac{z}2+1).
\]
Write
\[
 \zeta:=\frac{-1+i\sqrt{15}}4.
\]
The eigenvalues of $Q_{\ee,9}$ are $-1,\zeta,\overline\zeta$.
The number $\zeta$ is not a root of unity: otherwise
$\zeta+\zeta^{-1}=-\frac12$ would be a rational algebraic integer.
Thus the normalized return has an irrational rotational component.

\begin{lemma}\label{lem:strict-elliptic-open}
Let
\begin{equation}\label{eq:strict-elliptic-neighborhood}
 \mathcal U_{\ee,9}:=
 \left\{G\in\Sym_3 \Bigg |
 \frac{99}{100}H_{\ee,9}\prec G
 \prec\frac{989}{900}H_{\ee,9}\right\}.
\end{equation}
Every $G\in\mathcal U_{\ee,9}$ admits infinite repetition of
$\omega_{\ee,9}$, with a uniquely longest edge at every
bisection. The entire branch remains in a compact subset of the
nondegenerate shape space. Its marked orbit contains infinitely
many unlabeled similarity classes unless
$\mathscr C_{Q_{\ee,9}}(G)=G$.
\end{lemma}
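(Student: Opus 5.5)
The plan is to exploit the fact that $Q_{\ee,9}$ is an isometry of $H_{\ee,9}$: the condition defining $\mathcal U_{\ee,9}$ is preserved by $\mathscr C_{Q_{\ee,9}}$. Indeed, if $\frac{99}{100}H_{\ee,9}\prec G\prec\frac{989}{900}H_{\ee,9}$, then congruence by $Q_{\ee,9}$ preserves both strict inequalities, and by \eqref{eq:strict-elliptic-isometry} it maps $H_{\ee,9}$ to itself. Hence $\mathscr C_{Q_{\ee,9}}(\mathcal U_{\ee,9})=\mathcal U_{\ee,9}$.

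Since $P_{\ee,9}=\frac18Q_{\ee,9}$, one block sends $G$ to $\frac1{64}\mathscr C_{Q_{\ee,9}}(G)$, which is projectively a point of $\mathcal U_{\ee,9}$ again. So it suffices to prove strict admissibility of $W_{\ee,9}$ at every $G\in\mathcal U_{\ee,9}$; infinite repetition then follows by induction.

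\textbf{Strict admissibility.} For each of the nine letters $b_k$ and each competing edge $e\neq\underline{b_k}$ at that stage, the gap $d^{(k-1)}_{\underline{b_k}}-d^{(k-1)}_e$ is a linear functional of $G$. Concretely, it equals $a^{\tr}Ga-c^{\tr}Gc$, where $a,c$ are the coefficient vectors (in the starting edge basis, via the prefix matrices $B_V$) of the selected and competing edges. Writing $G=H_{\ee,9}+D$ with $-\frac1{100}H_{\ee,9}\prec D\prec\frac{89}{900}H_{\ee,9}$, the gap is at least
\[
a^{\tr}H_{\ee,9}a-c^{\tr}H_{\ee,9}c-\tfrac1{100}a^{\tr}H_{\ee,9}a-\tfrac{89}{900}c^{\tr}H_{\ee,9}c .
\]
So the key finite certificate is: for all $9\times5$ comparisons,
\[
\tfrac{99}{100}\,a^{\tr}H_{\ee,9}a>\tfrac{989}{900}\,c^{\tr}H_{\ee,9}c .
\]
Equivalently, each ratio $c^{\tr}H_{\ee,9}c/a^{\tr}H_{\ee,9}a$ is below $891/989$. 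The odd constants in \eqref{eq:strict-elliptic-neighborhood} strongly suggest they were chosen so that the worst comparison is just barely satisfied. I expect this tabulation to be the main obstacle: it is a routine but long computation with the midpoint formulas \eqref{eq:explicit_L} on the squared-edge vector of $H_{\ee,9}$, namely $(38,44,56,20,18,20)$, where one tracks the two quadratic values at each stage. I would present it as a table analogous to Lemma~\ref{lem:open-four-gaps}, giving for each $k$ the worst competitor and its ratio.

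\textbf{Compactness.} The normalized orbit stays in the closure of $\mathcal U_{\ee,9}$ intersected with the set $\{G\succeq\frac{99}{100}H_{\ee,9}\}$, which is a compact set of positive definite matrices, since $G\prec\frac{989}{900}H_{\ee,9}$ bounds it above and $G\succ\frac{99}{100}H_{\ee,9}\succ0$ bounds it below. The intermediate stages are images of this set under the finitely many invertible maps $\mathscr C_{B_V}$, so they are compact and consist of positive definite matrices. Projecting gives a compact subset of shape space.

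\textbf{Infinitely many classes.} Decompose $\R^3$ into the $(-1)$-eigenline and the rotation plane of $Q_{\ee,9}$; these are $H_{\ee,9}$-orthogonal. Acting by congruence on symmetric matrices, $\mathscr C_{Q_{\ee,9}}$ has eigenvalues given by products of pairs from $\{-1,\zeta,\overline\zeta\}$: namely $1$ (with multiplicity $2$, from $(-1)^2$ and $\zeta\overline\zeta$), and $\zeta^2,\overline\zeta^{\,2},-\zeta,-\overline\zeta$. None of $\zeta^2$ or $-\zeta$ is a root of unity, since $\zeta$ is not. So if $G$ is not fixed, its orbit $\mathscr C_{Q_{\ee,9}}^n(G)$ has a nonzero component in an eigenspace with non-root-of-unity eigenvalue of modulus one. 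That component rotates irrationally and is never periodic, so the orbit is infinite. Because congruence preserves determinant up to $\det Q_{\ee,9}^2=1$, all orbit points have the same scale, so projectively distinct points stay distinct. Each unlabeled class contains at most $|S_4|=24$ labeled Gram matrices of a given determinant, hence the orbit meets infinitely many unlabeled similarity classes.
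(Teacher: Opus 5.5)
Your proposal is correct and follows the paper's proof essentially step for step: $\mathscr C_{Q_{\ee,9}}$-invariance of $\mathcal U_{\ee,9}$, the sandwich bound that reduces each gap to $\frac{99}{100}a^{\tr}H_{\ee,9}a-\frac{989}{900}c^{\tr}H_{\ee,9}c$, compactness of the closure and of its prefix images, and the eigenvalues $1,1,-\zeta,-\overline\zeta,\zeta^2,\overline\zeta^{\,2}$ of the congruence operator combined with determinant preservation. The one piece you left undone, the nine-row table at $H_{\ee,9}$, is what the paper supplies, and it does hold: the worst ratios are $18/20$ at $k=3$ and $(9/8)/(5/4)$ at $k=9$, both equal to $9/10<891/989$, which is the paper's bound $a_k\geq\frac{10}{9}b_k$.
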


\begin{proof}
For brevity, write $H=H_{\ee,9}$ and $Q=Q_{\ee,9}$ in this
proof. Applying the word to the reference Gram matrix $H$ gives
the following selected squared lengths $a_k$ and largest
competing squared lengths $b_k$:
\[
\begin{array}{c|c|c|c}
 k&\text{bisection}&a_k&b_k\\\hline
 1&3\leftarrow0&56&44\\
 2&0\leftarrow2&44&38\\
 3&1\leftarrow2&20&18\\
 4&2\leftarrow3&18&11\\
 5&3\leftarrow1&11&19/2\\
 6&1\leftarrow0&19/2&9/2\\
 7&3\leftarrow0&9/2&7/2\\
 8&0\leftarrow2&7/2&19/8\\
 9&2\leftarrow3&5/4&9/8
\end{array}
\]
In particular, $a_k\geq \frac{10}{9}b_k$ for every $k$.

By~\eqref{eq:strict-elliptic-isometry}, $\mathcal U_{\ee,9}$ is
invariant under $\mathscr C_Q$. Congruence by every prefix of the word
preserves its two defining inequalities. Hence, at phase $k$ of
every normalized block, the selected squared length is greater
than $\frac{99}{100}a_k$, while every competitor is less than
$(989/900)b_k$. The gap is therefore greater than
\[
 \frac{99}{100}a_k-\frac{989}{900}b_k
 \geq\frac1{900}b_k>0.
\]
This proves strict admissibility of the entire branch.

The closure of $\mathcal U_{\ee,9}$ is a compact subset of
$\Sym_3^{>0}$. Its images under the finitely many prefix
congruences are also compact and positive definite. Their
projectivizations contain every phase of the branch, proving
uniform nondegeneracy.

Since $Q$ is diagonalizable over $\C$, the induced congruence
operator $\mathscr C_Q$ is diagonalizable as well. Its eigenvalues are
\[
 1,1,-\zeta,-\overline\zeta,\zeta^2,\overline\zeta^{\,2}.
\]
None of the last four is a root of unity. Consequently,
$\mathscr C_Q^m(G)=G$ for some $m\geq1$ holds precisely when $\mathscr C_Q(G)=G$.
Furthermore, $\det \mathscr C_Q^n(G)=\det G$, so equality of projective
marked iterates implies equality of their normalized Gram
matrices. Thus a nonfixed $G$ has an infinite labeled similarity
orbit. Since vertex relabeling is a finite quotient, this orbit
contains infinitely many unlabeled similarity classes as well.
\end{proof}

We identify the elliptic orbit of the explicit matrix
$G_{\ee,9}$. Define
\[
 q(a,b):=(a,b,2b)^{\tr},
 \qquad
 U_{\ee,9}:=\begin{pmatrix}-3/2&5/2\\-1&1\end{pmatrix},
\]
\[
 \mathscr E_{\ee,9}:=
 \{(a,b)\in\R^2\mid 4a^2-10ab+10b^2=4\}.
\]
The identities
\[
 Q_{\ee,9}^{\tr}q(a,b)=q\bigl(U_{\ee,9}(a,b)^{\tr}\bigr),
 \qquad
 U_{\ee,9}^{\tr}
 \begin{pmatrix}4&-5\\-5&10\end{pmatrix}
 U_{\ee,9}=
 \begin{pmatrix}4&-5\\-5&10\end{pmatrix}
\]
show that
\begin{equation}\label{eq:strict-elliptic-curve}
 \mathcal G_{\ee,9}(a,b):=
 H_{\ee,9}+q(a,b)q(a,b)^{\tr},
 \qquad (a,b)\in\mathscr E_{\ee,9},
\end{equation}
is a projectively invariant family. More explicitly, if
$(a',b')^{\tr}=U_{\ee,9}(a,b)^{\tr}$, then
\[
 \mathscr C_{P_{\ee,9}}(\mathcal G_{\ee,9}(a,b))
 =\frac1{64}\mathcal G_{\ee,9}(a',b').
\]
A direct calculation gives
\[
 q(a,b)^{\tr}H_{\ee,9}^{-1}q(a,b)
 =\frac{4a^2-10ab+10b^2}{45}.
\]
Thus, on $\mathscr E_{\ee,9}$,
\begin{equation}\label{eq:strict-elliptic-rank-one-bound}
 H_{\ee,9}\preceq\mathcal G_{\ee,9}(a,b)
 \preceq\frac{49}{45}H_{\ee,9},
\end{equation}
so the whole family lies in $\mathcal U_{\ee,9}$.

The eigenvalues of $U_{\ee,9}$ are $\zeta,\overline\zeta$, and its
action on $\mathscr E_{\ee,9}$ is conjugate to an irrational rotation.
The map $\mathcal G_{\ee,9}$ identifies exactly antipodal parameter
points. Its image is an affine ellipse: after a linear change
sending $\mathscr E_{\ee,9}$ to a circle, its matrix entries are affine
functions of the doubled-angle coordinates. Thus the induced
return on $\mathcal G_{\ee,9}(\mathscr E_{\ee,9})$ is again conjugate to an
irrational rotation. Also,
\[
 \det\mathcal G_{\ee,9}(a,b)
 =9720\left(1+\frac4{45}\right)=10584,
\]
so projectivization introduces no additional identifications.
In particular, $(a,b)=(1,0)$ gives the explicit example
$G_{\ee,9}$, and every marked orbit on this ellipse is infinite.

\begin{proof}[Proof of Theorem~\ref{thm:main-elliptic}]
The family~\eqref{eq:strict-elliptic-curve} is uncountable, while
each unlabeled similarity class contains at most finitely many
of its parameter points. It therefore contains an uncountable
subfamily of pairwise non-similar tetrahedra. By
Lemma~\ref{lem:strict-elliptic-open}, all their branches are
strictly admissible, remain uniformly nondegenerate, and contain
infinitely many similarity classes.

More generally, the fixed space of $\mathscr C_{Q_{\ee,9}}$ has dimension
two. Removing it from $\mathcal U_{\ee,9}$ gives a nonempty open
set of Gram matrices having strictly admissible, uniformly
nondegenerate infinite marked orbits.
Finally, by  Lemma~\ref{lem:full-tree-tie-free}, the assertion follows. 
\end{proof}

We finish with an explicit example for which no ties occur anywhere
in the refinement tree, rather than only along the elliptic branch.
Let
\[
 \tau:=\frac{\sqrt[7]{2}}2,
\]
and prescribe the squared-edge vector
\begin{equation}\label{eq:elliptic-full-tree-example}
 \mathbf{d}_\tau:=
 \begin{pmatrix}39\\44\\56\\21\\19\\20\end{pmatrix}
 +\frac1{1000}
 \begin{pmatrix}\tau\\\tau^2\\\tau^3\\\tau^4\\\tau^5\\\tau^6\end{pmatrix}.
\end{equation}

\begin{proposition}\label{prop:elliptic-full-tree-example}
The vector $\mathbf{d}_\tau$ represents a nondegenerate tetrahedron with a
strictly admissible LEB branch that remains uniformly
nondegenerate and contains infinitely many similarity classes.
Every descendant in its entire refinement tree has pairwise
distinct edge lengths.
\end{proposition}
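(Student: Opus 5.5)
The plan has two parts. First, place the Gram matrix of $\mathbf{d}_\tau$ inside $\mathcal U_{\ee,9}$ and invoke Lemma~\ref{lem:strict-elliptic-open}. Second, obtain the tie-freeness of the whole tree from the last assertion of Lemma~\ref{lem:full-tree-tie-free}, by showing that the six coordinates of $\mathbf{d}_\tau$ are linearly independent over $\Q$.

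For the first part, let $G_\tau:=\mathcal I^{-1}(\mathbf{d}_\tau)$ via \eqref{eq:E_inverse}. Since $\mathbf{d}_{\ee,9}=\mathcal I(G_{\ee,9})$, we can write $G_\tau=G_{\ee,9}+\frac1{1000}\mathcal I^{-1}(\tau,\dots,\tau^6)^{\tr}$, and as $0<\tau<1$ the perturbation $E$ has all entries bounded by $\frac{2}{1000}$ or so. We already know $G_{\ee,9}=H+e_1e_1^{\tr}=\mathcal G_{\ee,9}(1,0)$ satisfies $H\preceq G_{\ee,9}\preceq\frac{49}{45}H$ by \eqref{eq:strict-elliptic-rank-one-bound}. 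It therefore suffices to show $-\frac1{100}H\prec E\prec(\frac{989}{900}-\frac{49}{45})H=\frac{9}{900}H=\frac1{100}H$. The smallest eigenvalue of $H$ is bounded below by $\det H/(\text{product of two largest eigenvalues})$, or crudely by Gershgorin on $H^{-1}$; a bound like $\lambda_{\min}(H)\geq 9720/(138^2)>0.5$ should follow from $\operatorname{tr}H=138$. Meanwhile $\|E\|_{\mathrm{op}}\leq 3\|E\|_{\max}<0.01$, which gives $\pm E\prec\frac1{100}\cdot 0.5\,I\cdot 2\preceq\frac1{100}H$ with room to spare. I expect this margin check to be the main (though routine) obstacle, and would verify the numerical constants carefully.

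Lemma~\ref{lem:strict-elliptic-open} then gives strict admissibility, uniform nondegeneracy, and infinitely many similarity classes, unless $\mathscr C_{Q_{\ee,9}}(G_\tau)=G_\tau$. To rule out the fixed case, note that $\mathscr C_Q$ has rational entries, so the fixed condition is a system of rational linear equations in $\mathbf{d}_\tau$. Each nonzero equation would give a nontrivial rational relation among $1,\tau,\dots,\tau^6$. Some equation is indeed nonzero: $\mathscr C_Q$ does not fix $G_{\ee,9}$, because $(1,0)$ is not fixed by $U_{\ee,9}$ and $\mathcal G_{\ee,9}$ only identifies antipodes. Hence the fixed space is a proper rational subspace.

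For the second part, the key algebraic fact is that $\tau=2^{1/7}/2$ has degree $7$ over $\Q$, since $x^7-2$ is Eisenstein. Thus $1,\tau,\dots,\tau^6$ are $\Q$-linearly independent. A rational relation $\sum c_i d_i=0$ among the coordinates $d_i=n_i+\tau^i/1000$ gives $\sum c_in_i+\frac1{1000}\sum c_i\tau^i=0$, which forces every $c_i=0$. So the coordinates of $\mathbf{d}_\tau$ are $\Q$-independent, and Lemma~\ref{lem:full-tree-tie-free} shows that every descendant under arbitrary directed bisections has pairwise distinct edge lengths. The same independence handles the fixed-point exclusion above, since any rational linear equation violated by $\mathbf{d}_{\ee,9}$ is a nonzero rational form and cannot vanish at $\mathbf{d}_\tau$.
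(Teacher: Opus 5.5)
Your proof is essentially the paper's: you place $G_\tau$ in $\mathcal U_{\ee,9}$ by comparing $E=G_\tau-G_{\ee,9}$ with $H_{\ee,9}$ and using \eqref{eq:strict-elliptic-rank-one-bound} at $(a,b)=(1,0)$, you invoke Lemma~\ref{lem:strict-elliptic-open}, and you get tie-freeness of the whole tree from $[\Q(\tau):\Q]=7$ and Lemma~\ref{lem:full-tree-tie-free}. The only deviation is how you rule out $\mathscr C_{Q_{\ee,9}}(G_\tau)=G_\tau$: you use $\Q$-independence of the coordinates of $\mathbf{d}_\tau$, whereas the paper extracts $g_{11}=g_{13}$ from the $(3,3)$ entry and checks $(G_\tau)_{11}-(G_\tau)_{13}=1+\frac{\tau-\tau^3+\tau^5}{2000}>0$ directly; both are valid.

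One arithmetic caution: your margin chain does not close as written. From $\lambda_{\min}(H_{\ee,9})>\frac12$ and $\|E\|_{\mathrm{op}}<\frac1{100}$ you only get $\pm E\prec\frac1{50}H_{\ee,9}$; the factor $2$ in your display is unjustified. It is easily repaired by AM--GM: $\lambda_2\lambda_3<69^2$ gives $\lambda_{\min}(H_{\ee,9})>9720/4761>2$ (the paper instead shows $H_{\ee,9}\succ 4I$). Hence $\frac1{100}H_{\ee,9}\succ\frac1{50}I$, and this dominates $\pm E$ because $\|E\|_{\mathrm{op}}\le 3\|E\|_{\max}<\frac{9}{2000}$.
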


\begin{proof}
Let $G_\tau=\mathcal I^{-1}(\mathbf{d}_\tau)$ and
$D=G_\tau-G_{\ee,9}$. Since $0<\tau<1$, formula
\eqref{eq:E_inverse} gives
\[
 \|D\|_{\max}<\frac3{2000},
 \qquad \|D\|_{\mathrm{op}}<\frac9{2000}.
\]
The leading principal minors of $H_{\ee,9}-4I$ are
$34,399,2828$, so $H_{\ee,9}\succ4I$. Hence
\[
 -\frac9{8000}H_{\ee,9}\prec D
 \prec\frac9{8000}H_{\ee,9}.
\]
Together with~\eqref{eq:strict-elliptic-rank-one-bound} at
$(a,b)=(1,0)$, these inequalities imply
$G_\tau\in\mathcal U_{\ee,9}$.

The $(3,3)$ entry of $\mathscr C_{Q_{\ee,9}}(G)=G$ forces
$g_{11}=g_{13}$. In the present example,
\[
 (G_\tau)_{11}-(G_\tau)_{13}
 =1+\frac{\tau-\tau^3+\tau^5}{2000}>0.
\]
Thus $G_\tau$ is not fixed, and
Lemma~\ref{lem:strict-elliptic-open} proves the assertions about
its prescribed branch.

Finally, $\tau$ has degree seven over $\Q$, since $2\tau$ is a
root of $X^7-2$, which is Eisenstein at $2$. A rational linear
relation among the six coordinates of $\mathbf{d}_\tau$ would give a
polynomial of degree at most six vanishing at $\tau$. That
polynomial must be identically zero; its coefficients of
$\tau,\ldots,\tau^6$ then force the original relation to be zero.
The six squared-edge coordinates are therefore linearly
independent over $\Q$. Lemma~\ref{lem:full-tree-tie-free} excludes
all descendant edge equalities, even when arbitrary directed
bisections are allowed.
\end{proof}

\bibliographystyle{myamsalpha}
\bibliography{3d_LEB_degen}

\end{document}